\documentclass{amsart}
\usepackage{amscd,amsmath,amssymb}
\usepackage{pstricks}
\usepackage{latexsym,amsbsy,mathrsfs}
\usepackage{xy}
\usepackage{xypic}
\usepackage{pgf,tikz}

\newtheorem{thm}{Theorem}[section]
\newtheorem{lem}[thm]{Lemma}
\newtheorem{cor}[thm]{Corollary}
\newtheorem{prop}[thm]{Proposition}

\setcounter{section}{0}
\theoremstyle{definition}
\newtheorem{example}[thm]{Example}

\newtheorem{defn}[thm]{Definition}

\newtheorem{rem}[thm]{Remark}

\numberwithin{equation}{thm}

\begin{document}
\title[ A general construction of $n$-ANGULATED CATEGORIES]
{A general construction of $n$-angulated categories using periodic injective resolutions  }

\author{Zengqiang Lin}
\address{ School of Mathematical sciences, Huaqiao University,
Quanzhou\quad 362021,  China.} \email{zqlin@hqu.edu.cn}

\thanks{This work was supported  by the Science Foundation of Huaqiao University (Grant No. 2014KJTD14)}

\subjclass[2010]{16G20, 18E30, 18E10}

\keywords{ $n$-angulated category; Frobenius category; selfinjective algebra.}

\begin{abstract}
Let $\mathcal{C}$ be an additive category equipped with an automorphism $\Sigma$.
We show how to obtain $n$-angulations of $(\mathcal{C},\Sigma)$ using some particular periodic injective resolutions. We give necessary and sufficient conditions on $(\mathcal{C},\Sigma)$ admitting  an $n$-angulation. Then we apply these characterizations to explain the standard construction of $n$-angulated categories and the $n$-angulated categories arising from some local rings. Moreover, we obtain a  class of new examples of $n$-angulated categories from quasi-periodic selfinjective algebras.
\end{abstract}

\maketitle

\section{Introduction}

Let $n$ be an integer greater than or equal to three. Motivated by the development of higher cluster tilting subcategories and the higher Auslander Reiten theory \cite{[Iy],[IY]}, Geiss, Keller and Oppermann  introduced the notion of $n$-angulated categories, which are $``$higher dimensional" analogues of triangulated categories, and gave a ``standard construction" of $n$-angulated categories from  $(n-2)$-cluster tilting subcategories of triangulated categories which are closed under the $(n-2)$-nd power of the suspension functor \cite{[GKO]}. For $n=3$, an $n$-angulated category is nothing but a classical triangulated category. Other examples of $n$-angulated categories arising from local rings were given in \cite{[BJT]}. The theory of $n$-angulated categories has been developed further, see \cite{[ABT],[BT1],[BT3],[J],[Jo],[L1],[L2],[L3],[LZ]} for example.

The first aim and motivation of this paper is to present a general framework to unify the two constructions of $n$-angulated categories in \cite{[GKO]} and \cite{[BJT]}. The second motivation is to construct new examples of $n$-angulated categories.

Roughly speaking, an {\em $n$-angulated category} is an additive category $\mathcal{C}$ equipped with an automorphism $\Sigma$ of $\mathcal{C}$ and a class $\Theta$ of $n$-$\Sigma$-sequences satisfying four axioms, denoted (N1)-(N4) (see 2.1 for precise definition). In this case, $\Theta$ is called an {\em $n$-angulation} of $(\mathcal{C},\Sigma)$. If $\Theta$  satisfies (N1)-(N3), then $\Theta$ is called a {\em pre-$n$-angulation} and its elements are called {\em $n$-angles}.

We first note that if $\mathcal{C}$ is an $n$-angulated category, then $\mbox{mod}\,\mathcal{C}$, the category of finitely presented  functors from $\mathcal{C}^{op}$ to the category Ab of abelian groups, is a Frobenius category \cite[Proposition 2.5(b)]{[GKO]}. An important property of $n$-angles is that all $n$-angles are exact \cite[Proposition 2.5(a)]{[GKO]}.
By Yoneda embedding $\iota: \mathcal{C}\rightarrow \mbox{proj}\,\mathcal{C}$, where $\mbox{proj}\,\mathcal{C}$ is the subcategory of $\mbox{mod}\,\mathcal{C}$ consists of projective objects, we identify $\mathcal{C}$ with $\iota(\mathcal{C})$, the essential image of $\iota$. Thus, an $n$-angle can be seen as an $n$-$\Sigma$-periodic  exact complex over $\iota(\mathcal{C})$. Furthermore, an $n$-angulation $\Theta$ can be identified as a full subcategory of $C^{\tiny\mbox{ex}}_{n\mbox{-}\Sigma}(\iota(\mathcal{C}))$, the category of $n$-$\Sigma$-periodic exact complexes over  $\iota(\mathcal{C})$.
According to this point of view,  an $n$-angulation turns out to be closed under translation functor and $n$-$\Sigma$-homotopy equivalence. Moreover, an $n$-angulation must contain all $n$-$\Sigma$-contractible complexes in $C^{\tiny\mbox{ex}}_{n\mbox{-}\Sigma}(\iota(\mathcal{C}))$.

Now assume that $\mathcal{C}$ is an additive category equipped with an automorphism $\Sigma$ and  $\mbox{mod}\,\mathcal{C}$ is a Frobenius category. Let $\Theta$ be a full subcategory of $C^{\tiny\mbox{ex}}_{n\mbox{-}\Sigma}(\iota(\mathcal{C}))$ which is closed under translation functor and $n$-$\Sigma$-homotopy equivalence. We construct a functor $Z_1: \Theta\rightarrow \mbox{mod}\,\mathcal{C}$ by sending a complex $X_\bullet=(X_i, f_i)_{i\in\mathbb{Z}}$ to $ M=\mbox{ker}f_1$. In this case, $X_\bullet$ is an $n$-$\Sigma$-periodic injective resolution of $M$. Our main theorem is the following.

\begin{thm}(cf. Theorem \ref{thm2}) The following statements are equivalent.

(a) The class $\Theta$ is an $n$-angulation of $(\mathcal{C},\Sigma)$.

(b) The functor $Z_1: \Theta\rightarrow \mbox{mod}\,\mathcal{C}$ is dense and $\Theta$ satisfies axiom (N4).

(c) The functor $Z_1: \Theta\rightarrow \mbox{mod}\,\mathcal{C}$ is dense and ``strongly" full.
\end{thm}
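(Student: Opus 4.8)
The plan is to establish the cycle $(a)\Rightarrow(c)\Rightarrow(b)\Rightarrow(a)$, exploiting throughout the dictionary between the axioms (N1)--(N4) and properties of $Z_1$ afforded by the fact that each $X_\bullet\in\Theta$ is an $n$-$\Sigma$-periodic injective resolution of $M=Z_1(X_\bullet)=\ker f_1$, so that module maps lift to chain maps by the comparison theorem.

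First I would record which axioms come for free. As a full subcategory of $C^{\mathrm{ex}}_{n\text{-}\Sigma}(\iota(\mathcal{C}))$ closed under $n$-$\Sigma$-homotopy equivalence, $\Theta$ is closed under isomorphism; containing the $n$-$\Sigma$-contractible complexes gives the trivial $n$-angles of (N1)(b); and closure under the translation functor is the rotation axiom (N2). The remaining part of (N1)(a), closure under direct sums and summands, will follow once $Z_1$ is dense, because any two $n$-$\Sigma$-periodic injective resolutions of isomorphic modules are $n$-$\Sigma$-homotopy equivalent, so a summand of a complex in $\Theta$ is homotopy equivalent to the resolution of the corresponding summand module supplied by density. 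The key correspondence is (N1)(c)$\leftrightarrow$density: completing $f_1\colon X_1\to X_2$ to an $n$-angle means extending the injective copresentation $0\to M\to X_1\xrightarrow{f_1}X_2$ of $M=\ker f_1$ to an $n$-$\Sigma$-periodic injective resolution inside $\Theta$, and since every module of $\mathrm{mod}\,\mathcal{C}$ admits such a copresentation, the solvability of all completion problems is precisely density of $Z_1$. Hence, under density, the theorem reduces to matching (N3) and (N4) against the fullness properties of $Z_1$.

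For $(a)\Rightarrow(c)$ I would first read off ordinary fullness from (N3). By the comparison theorem any $\varphi\colon M\to M'$ lifts to a chain map $X_\bullet\to X'_\bullet$, and closure of $\Theta$ under homotopy equivalence keeps the lift inside $\Theta$; reading off its first two components gives a commutative square, and conversely (N3) completes any prescribed such pair to a morphism of $n$-angles, which is exactly fullness of $Z_1$. The point of the term \emph{strongly full} is that it additionally records the coherence of the higher octahedral axiom: for the composable configuration appearing in (N4), strong fullness demands that the lifts of the induced module maps be choosable compatibly. Thus (N3) upgrades to strong fullness once (N4) is available, giving $(a)\Rightarrow(c)$; and specializing the strong lifting property to the octahedral configuration returns (N4) verbatim, which gives $(c)\Rightarrow(b)$, density being common to both.

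The hard part is $(b)\Rightarrow(a)$, where I must manufacture the morphism axiom (N3) from density and (N4) alone. The strategy mirrors the classical derivation of the completion axiom from the octahedral axiom in triangulated categories: starting from two $n$-angles and a partial morphism $(\varphi_1,\varphi_2)$ compatible with the first differentials, I would use density to embed the data into a composable pair of $n$-angles, apply (N4) to obtain a connecting $n$-angle together with its octahedral morphisms, and extract the missing components $\varphi_3,\dots,\varphi_n$ from that morphism. The delicate bookkeeping---and the place where the periodic injective resolutions do the real work---is in transporting the octahedral diagram back to an honest chain map in $\Theta$: the lifts produced by the comparison theorem are determined only up to homotopy, and it is precisely the closure of $\Theta$ under $n$-$\Sigma$-homotopy equivalence that lets these homotopy ambiguities, together with the signs introduced by the rotations in (N2), be absorbed without leaving the class. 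Once (N3) is established the remaining axioms are already in place, so (a) holds and the cycle closes.
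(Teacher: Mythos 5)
Your proposal has two genuine gaps, and they sit exactly where the paper's proof does its real work.

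First, your implication (c) $\Rightarrow$ (b) is not ``verbatim''. Strong fullness hands you, for the induced map $h\colon M\to N$ on kernels, \emph{some} lift $T(h)=(\phi_1,\dots,\phi_n)$ with $Z_1(T(h))=h$ and $C(T(h))\in\Theta$; but axiom (N4) demands a completion of the \emph{prescribed} pair $(\varphi_1,\varphi_2)$, and nothing forces $\phi_1,\phi_2$ to coincide with $\varphi_1,\varphi_2$. The paper bridges this by an explicit homotopy correction: since $(\varphi_1-\phi_1)k_1=0$ one finds $h_1\colon X_2\to Y_1$ with $\varphi_1-\phi_1=h_1f_1$, then $h_2\colon X_3\to Y_2$ with $\varphi_2-\phi_2=g_1h_1+h_2f_2$, and one sets $\varphi_3=\phi_3+g_2h_2$ and $\varphi_i=\phi_i$ for $i\geq 4$. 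The resulting chain map has first two components exactly $\varphi_1,\varphi_2$, is $n$-$\Sigma$-homotopic to $T(h)$ via $(h_1,h_2,0,\dots,0)$, so its mapping cone is isomorphic to $C(T(h))$ in $K^{\mathrm{ex}}_{n\text{-}\Sigma}(\iota(\mathcal{C}))$ and hence lies in $\Theta$ by closure under homotopy equivalence. You do gesture at ``absorbing homotopy ambiguities,'' but you place that remark in (b) $\Rightarrow$ (a) and never supply the mechanism; without it, (c) $\Rightarrow$ (b) is unproven.

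Second, several of your preliminary ``for free'' claims are false, and this makes your (b) $\Rightarrow$ (a) collapse. The claim that any two $n$-$\Sigma$-periodic injective resolutions of isomorphic modules are $n$-$\Sigma$-homotopy equivalent is contradicted by the paper's Lemma \ref{lr1}: for a local ring with $n$ odd and $2p\neq 0$, the complexes $R(u)_\bullet$ and $R(-u)_\bullet$ both resolve $\mathfrak{m}$ but are not $n$-$\Sigma$-homotopy equivalent. Likewise ``(N1)(c) $\Leftrightarrow$ density'' is wrong: the comparison theorem produces lifts of module maps only as non-periodic chain maps, and the obstruction to choosing them $n$-$\Sigma$-periodically is precisely the failure of naturality of the comparison isomorphisms $\beta_M\colon\Sigma M\to\Omega^{-n}M$; this is why density does not imply fullness, and why in the same local-ring situation $(\mathcal{C},\Sigma)$ admits no $n$-angulation at all (Remark \ref{lr2}). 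Finally, your ``hard part'' is misplaced: with the paper's phrasing, (N4) already contains the existence statement of (N3), so no octahedral-style derivation of (N3) is needed. What (b) $\Rightarrow$ (a) actually requires is (N1)(a) and (N1)(c), and the paper obtains these by observing that (N4) makes $Z_1$ full and then invoking Theorem \ref{thm1}, whose proof constructs a natural isomorphism $\alpha\colon(\Sigma,\sigma)\to(\Omega^{-n},(-1)^n1_{\Omega^{-n-1}})$ of triangle functors, identifies $\Theta=\Theta_\alpha$, and appeals to the bijection of Lemma \ref{2.3}. Your sketch never produces this $\alpha$, and without it neither fullness nor the missing parts of (N1) are available from density alone.
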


By definition, the functor $Z_1: \Theta\rightarrow \mbox{mod}\,\mathcal{C}$ is called {\em``strongly" full} if for each morphism $h: Z_1X_\bullet\rightarrow Z_1Y_\bullet$ in $\mbox{mod}\,\mathcal{C}$, where $X_\bullet,Y_\bullet\in\Theta$, there exists a morphism $T(h): X_\bullet\rightarrow Y_\bullet$ in $\Theta$ such that $Z_1(T(h))=h$ and the mapping cone $C(T(h))$ belongs to $\Theta$.

Actually, (a) $\Leftrightarrow$ (b) and (b) $\Rightarrow$ (c) come from a more general result, characterizing a pre-$n$-angulation in terms of the functor $Z_1$.

\begin{thm}(cf. Theorem \ref{thm1}) The following statements are equivalent.

(a) The class $\Theta$ is a pre-$n$-angulation of $(\mathcal{C},\Sigma)$.

(b) The functor $Z_1: \Theta\rightarrow \mbox{mod}\,\mathcal{C}$ is dense and full.
\end{thm}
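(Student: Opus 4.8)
The plan is to exploit the dictionary set up in the introduction: an object $X_\bullet=(X_i,f_i)\in\Theta$ is an $n$-$\Sigma$-periodic injective resolution of the module $M=Z_1X_\bullet=\ker f_1$, and a morphism of $\Theta$ is a $\Sigma$-equivariant (i.e. periodic) chain map of such resolutions. Because $\mbox{mod}\,\mathcal{C}$ is a Frobenius category, injective and projective objects coincide and the comparison theorem for injective resolutions is available: every object has an injective copresentation, every module map lifts to a chain map of resolutions, and any two lifts are chain homotopic. The three \emph{structural} requirements, namely (N1)(a), (N1)(b) and the rotation axiom (N2), are read off directly from the standing hypotheses that $\Theta$ is full and closed under the translation functor and under $n$-$\Sigma$-homotopy equivalence: any complex with zero kernel is $n$-$\Sigma$-contractible, so (N1)(b) follows from density applied to $M=0$ together with homotopy closure; and a direct sum or a direct summand of periodic injective resolutions is again one, hence $n$-$\Sigma$-homotopy equivalent to a complex produced by density, giving (N1)(a). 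The genuine content of the theorem is therefore the pair of matchings (N1)(c)$\leftrightarrow$``density'' and (N3)$\leftrightarrow$``fullness''.

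For (a)$\Rightarrow$(b): for density, given $M\in\mbox{mod}\,\mathcal{C}$ choose an injective copresentation $0\to M\to X_1\xrightarrow{f_1}X_2$ with $X_1,X_2\in\iota(\mathcal{C})$; by (N1)(c) the morphism $f_1$ extends to an $n$-angle $X_\bullet\in\Theta$, and then $Z_1X_\bullet=\ker f_1\cong M$. For fullness, let $h\colon M=Z_1X_\bullet\to N=Z_1Y_\bullet$ be given. Using injectivity of $Y_1$, extend the composite $M\xrightarrow{h}N\hookrightarrow Y_1$ along $M\hookrightarrow X_1$ to $\varphi_1\colon X_1\to Y_1$; since $\varphi_1$ carries $\ker f_1$ into $\ker g_1$, the map $g_1\varphi_1$ vanishes on $\ker f_1$ and hence, by injectivity of $Y_2$, factors as $g_1\varphi_1=\varphi_2 f_1$ for some $\varphi_2\colon X_2\to Y_2$. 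Axiom (N3) now completes the commutative square $(\varphi_1,\varphi_2)$ to a morphism of $n$-$\Sigma$-sequences $\varphi_\bullet\colon X_\bullet\to Y_\bullet$, which is a morphism in $\Theta$ (as $\Theta$ is full) with $Z_1(\varphi_\bullet)=h$.

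For (b)$\Rightarrow$(a) the structural axioms are handled as above, so it remains to produce (N1)(c) and (N3). For (N1)(c), density applied to $M=\ker f_1$ shows that $M$ admits an $n$-$\Sigma$-periodic injective resolution, so I may build a periodic injective resolution $A_\bullet$ of $M$ whose first morphism is the prescribed $f_1$ (starting from $M\hookrightarrow A_1\xrightarrow{f_1}A_2$ and resolving onwards, the periodicity $\Omega^{-n}M\cong\Sigma M$ guaranteeing that it closes up); by the comparison theorem $A_\bullet$ is $n$-$\Sigma$-homotopy equivalent to the complex supplied by density, hence lies in $\Theta$. For (N3), a commutative square $(\varphi_1,\varphi_2)$ on the first morphisms of two $n$-angles induces a map $h\colon Z_1X_\bullet\to Z_1Y_\bullet$ on kernels; fullness lifts $h$ to a periodic chain map $\psi_\bullet\in\Theta$, and since $\psi_1-\varphi_1$ vanishes on $\ker f_1$ one may correct $\psi_\bullet$ by a periodic null-homotopy so that its first two components become exactly $\varphi_1,\varphi_2$, yielding the required morphism of $n$-$\Sigma$-sequences.

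The step I expect to be the main obstacle is this last homotopy correction, together with its mirror image in the definition of fullness. The point is that morphisms of $\Theta$ are not arbitrary chain maps but $\Sigma$-equivariant ones, whereas the comparison theorem only produces (possibly non-periodic) chain maps and homotopies. Thus the crux is to carry out every lifting and every homotopy adjustment $\Sigma$-equivariantly: one must choose the correcting homotopy $s=(s_i)$ so that $s_{i+n}=\Sigma s_i$ and so that fixing the first two components does not destroy commutativity further around the $n$-angle. This is exactly where exactness of the complexes (so that a map vanishing on $\ker f_i$ factors through $\mbox{im}f_i$) and injectivity of the $X_i$ are used repeatedly, and getting the bookkeeping of signs and periodicity right is the delicate part of the argument.
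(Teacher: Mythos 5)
Your direction (a) $\Rightarrow$ (b) is correct and is essentially the paper's own argument (the paper gets density from a projective presentation, (N1)(c) and a double rotation, while you use an injective copresentation; both work), and your treatment of (N3) in (b) $\Rightarrow$ (a) --- lift the induced kernel map by fullness, then correct by a homotopy of the form $(h_1,h_2,0,\dots,0)$ so that the first two components become exactly $\varphi_1,\varphi_2$ --- is sound; it is in fact the technique the paper uses later in the proof of Theorem \ref{thm2}. The genuine gap is in (b) $\Rightarrow$ (a) at (N1)(c), and in weaker form at (N1)(a). Having built some $n$-$\Sigma$-periodic injective resolution $A_\bullet$ of $M=\ker f_1$ whose first morphism is $f_1$, you conclude that ``by the comparison theorem $A_\bullet$ is $n$-$\Sigma$-homotopy equivalent to the complex supplied by density, hence lies in $\Theta$.'' This inference is false: the comparison theorem only produces chain maps and homotopies that need not be $\Sigma$-equivariant, and two $n$-$\Sigma$-periodic injective resolutions of the same module need \emph{not} be $n$-$\Sigma$-homotopy equivalent. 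The paper itself contains the counterexample: by Lemma \ref{lr1} and Remark \ref{lr2}, for the local ring with $n$ odd and $2p\neq 0$, the complexes $R(u)_\bullet$ and $R(-u)_\bullet$ are both minimal $n$-$\Sigma$-periodic injective resolutions of $\mathfrak{m}$ but are not $n$-$\Sigma$-homotopy equivalent. Concretely, your $A_\bullet$ depends on the choice of identification used to ``close up'' the resolution after $n$ steps; different choices yield complexes that are inequivalent in $K^{\tiny\mbox{ex}}_{n\mbox{-}\Sigma}(\iota(\mathcal{C}))$, and only the choice compatible with $\Theta$ puts $A_\bullet$ inside $\Theta$. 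You correctly flagged periodicity of homotopies as ``the main obstacle,'' but the proof then assumes the obstacle away rather than resolving it.

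This is precisely what the paper's detour through condition (c) of Theorem \ref{thm1} is designed to handle: from density and fullness one first extracts, via the comparison theorem applied against the standard resolution (3.1), an isomorphism of triangle functors $\alpha:(\Sigma,\sigma)\rightarrow(\Omega^{-n},(-1)^n1_{\Omega^{-n-1}})$; then Lemma \ref{2.3} (i.e.\ \cite[Proposition 3.4]{[GKO]}) produces the pre-$n$-angulation $\Theta_\alpha$ --- and it is inside that cited result that a resolution with prescribed first morphism $f_1$ is closed up using the \emph{specific} isomorphism $\alpha_M$, not an arbitrary one; finally $\Theta=\Theta_\alpha$ is proved using that $Z_1$ on $\Theta_\alpha$ is full, so a periodic lift of $1_M$ is forced to be an $n$-$\Sigma$-homotopy equivalence by the square-zero-kernel Lemma \ref{lem1}(c) together with Lemma \ref{lem2}(b). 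Your argument for (N1)(a) suffers from the same false principle but is repairable within your framework: lift the inclusions and projections of $M\oplus N$ by fullness to periodic maps between $X_\bullet$, $Y_\bullet$ and $T$, and use that $\ker\underline{Z_1}$ is a square-zero ideal to see the resulting matrices are invertible in $K^{\tiny\mbox{ex}}_{n\mbox{-}\Sigma}(\iota(\mathcal{C}))$. By contrast, (N1)(c) cannot be repaired by any such bookkeeping: without introducing an invariant playing the role of $\alpha$, there is nothing to force your hand-built $A_\bullet$ to lie in $\Theta$ at all.
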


Therefore, we can construct an $n$-angulation as follows. For each $M\in\mbox{mod}\,\mathcal{C}$, we fix an $n$-$\Sigma$-periodic injective resolution $T_M\in C^{\tiny\mbox{ex}}_{n\mbox{-}\Sigma}(\iota(\mathcal{C}))$. Denote by $\Theta$ the full subcategory  of $ C^{\tiny\mbox{ex}}_{n\mbox{-}\Sigma}(\iota(\mathcal{C}))$ consisting of objects $X_\bullet$ which is $n$-$\Sigma$-homotopy equivalent to some $T_M$. Actually, we can fix a minimal $n$-$\Sigma$-periodic injective resolution $T_M\in C^{\tiny\mbox{ex}}_{n\mbox{-}\Sigma}(\iota(\mathcal{C}))$ for each $M\in\mbox{mod}\,\mathcal{C}$. Then $\Theta$ is the full subcategory of $ C^{\tiny\mbox{ex}}_{n\mbox{-}\Sigma}(\iota(\mathcal{C}))$ consisting of objects isomorphic to $T_M\oplus A_\bullet$ where  $A_\bullet$ is an $n$-$\Sigma$-contractible complex. If $\Theta$ is not closed under the translation functor, then $\Theta$ is not an $n$-angulation of $(\mathcal{C},\Sigma)$. Otherwise, $\Theta$ is an  $n$-angulation if $\Theta$ moreover satisfies (N4). Furthermore, each $n$-angulation can be constructed in this way.

Recall that given a field $k$, a finite-dimensional $k$-algebra $A$ is said to be $quasi$-$periodic$ if  $A$ has a quasi-periodic projective resolution over the enveloping algebra $A^e=A^{\tiny\mbox{op}}\otimes_k A$, i.e., $\Omega^n_{A^e}(A)\cong$ $_1A_\sigma$ as $A$-$A$-bimodule for some natural number $n$ and some automorphism $\sigma$ of $A$. In particular, $A$ is  $periodic$ if $\Omega^n_{A^e}(A)\cong A$. Typical examples of periodic algebras are  preprojective algebras of Dynkin type, Brauer tree algebras, algebras of quaternion type, selfinjective algebra of finite representation type, some $d$-cluster tilted algebras, deformed mesh algebras of generalized Dynkin type and so on; see \cite{[BES]}, \cite{[D1]}, \cite{[D2]}, \cite{[ES]}.

It is well known that periodic algebras are selfinjective and their module categories are periodic, i.e., each finite dimensional module has a periodic injective resolution. More generally, assume that $A$ is a finite-dimensional indecomposable $k$-algebra such that $\Omega^n_{A^e}(A)\cong$ $_1A_\sigma$ as an $A$-$A$-bimodule for some automorphism $\sigma$ of $A$ and for some $n\geq3$. Then $A$ is a selfinjective algebra and the functor $-\otimes_A(_1A_{\sigma^{-1}}): \mbox{proj}A\rightarrow\mbox{proj}A$ is an automorphism. As an application of Theorem 1.1, we can construct a new class of examples of $n$-angulated categories from quasi-periodic selfinjective algebras.

\begin{thm}(cf. Theorem \ref{4.2})
 The category $(\mbox{proj}\,A, -\otimes_A(_1A_{\sigma^{-1}}))$ admits an $n$-angulation.
\end{thm}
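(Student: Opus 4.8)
The plan is to apply the main theorem (Theorem 1.1) to the pair $\mathcal{C}=\mbox{proj}\,A$ and $\Sigma=-\otimes_A({}_1A_{\sigma^{-1}})$, so the first task is to check that the hypotheses of the framework hold. Since ${}_1A_{\sigma^{-1}}$ is an invertible $A$-$A$-bimodule, $\Sigma$ is an autoequivalence of $\mbox{proj}\,A$, and choosing a quasi-inverse upgrades it to a genuine automorphism. Using the standard equivalence $\mbox{mod}(\mbox{proj}\,A)\simeq\mbox{mod}\,A$, under which the Yoneda image $\iota(\mbox{proj}\,A)$ is identified with the projective $A$-modules, the hypothesis ``$\mbox{mod}\,\mathcal{C}$ is Frobenius'' becomes the statement that projectives and injectives coincide in $\mbox{mod}\,A$; this holds because $A$ is selfinjective, so $\mbox{proj}\,A=\mbox{inj}\,A$. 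Consequently $\iota(\mathcal{C})$ is exactly the class of projective-injective $A$-modules, and an object of $C^{\mbox{ex}}_{n\mbox{-}\Sigma}(\iota(\mathcal{C}))$ is precisely an $n$-$\Sigma$-periodic complete resolution.

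Next I would produce the $n$-$\Sigma$-periodic injective resolutions that force density of $Z_1$. Starting from a minimal projective resolution of $A$ over $A^e$ and truncating after $n$ steps, the isomorphism $\Omega^n_{A^e}(A)\cong{}_1A_\sigma$ allows me to splice the truncation to a $\sigma$-twist of itself, giving a strictly periodic exact complex of projective $A^e$-modules. Applying $-\otimes_A M$ for $M\in\mbox{mod}\,A$ yields an exact complex of projective-injective $A$-modules whose $n$-th shift is the $\sigma$-twist of the original; under the identification of this twist with $\Sigma$ it is an object $T_M\in C^{\mbox{ex}}_{n\mbox{-}\Sigma}(\iota(\mathcal{C}))$ with $Z_1T_M\cong M$. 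Taking $\Theta$ to be the full subcategory of complexes $n$-$\Sigma$-homotopy equivalent to some $T_M$ (equivalently, as noted after Theorem 1.2, all of $C^{\mbox{ex}}_{n\mbox{-}\Sigma}(\iota(\mathcal{C}))$, since every such complex is already a periodic resolution of its own $Z_1$), this class is closed under $n$-$\Sigma$-homotopy equivalence by construction and closed under the translation functor because the shift of a periodic complete resolution of $M$ is a periodic complete resolution of a syzygy of $M$. In particular $Z_1\colon\Theta\to\mbox{mod}\,A$ is dense.

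Finally I would verify that $Z_1$ is \emph{strongly} full, which by part (c) of Theorem 1.1 delivers the $n$-angulation. Given $h\colon Z_1X_\bullet\to Z_1Y_\bullet$ in $\mbox{mod}\,A$, the comparison theorem for injective resolutions lifts $h$ to a chain map $T(h)\colon X_\bullet\to Y_\bullet$ with $Z_1(T(h))=h$; choosing the lift compatibly with the $\Sigma$-periodic structure makes $T(h)$ a morphism in $\Theta$. Its mapping cone $C(T(h))$ is exact, being the cone of a chain map between exact complexes, and it is again $n$-$\Sigma$-periodic, so it lies in $C^{\mbox{ex}}_{n\mbox{-}\Sigma}(\iota(\mathcal{C}))=\Theta$; hence strong fullness holds and Theorem 1.1 gives that $\Theta$ is an $n$-angulation of $(\mbox{proj}\,A,\Sigma)$. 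The main obstacle I anticipate is the bookkeeping that promotes the stable-category isomorphism $\Omega^n_{A^e}(A)\cong{}_1A_\sigma$ to genuinely \emph{strict} $n$-$\Sigma$-periodicity at the level of complexes, both in building $T_M$ and in arranging that $T(h)$ can be chosen so its cone stays inside $C^{\mbox{ex}}_{n\mbox{-}\Sigma}$; this is essentially the content of axiom (N4), which is exactly why routing the argument through the strongly full criterion (c) rather than checking (N4) directly is the efficient path.
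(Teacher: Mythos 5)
Your construction of the resolutions $T_M$ --- tensoring the truncated bimodule resolution coming from $\Omega^n_{A^e}(A)\cong{}_1A_\sigma$ with ${}_1A_{\sigma^{-1}}$ and applying it to $M$ --- is the same as in the paper's proof of Theorem \ref{4.2}. The gap lies in how you verify the axioms. The parenthetical claim that $\Theta$ is ``equivalently all of $C^{\mathrm{ex}}_{n\mbox{-}\Sigma}(\iota(\mathcal{C}))$, since every such complex is already a periodic resolution of its own $Z_1$'' is false, and it is not what the remark after the main theorem says (Remark \ref{rem4.2} in fact warns that a $\Theta$ built from fixed resolutions can even fail to be closed under translation). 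Being an $n$-$\Sigma$-periodic injective resolution of $M$ does \emph{not} imply being $n$-$\Sigma$-homotopy equivalent to the chosen $T_M$: the comparison theorem produces homotopy equivalences, but not \emph{periodic} ones. The paper's local-ring examples make this concrete: $R(u)_\bullet$ and $R(v)_\bullet$ are both minimal $n$-$\Sigma$-periodic injective resolutions of $\mathfrak{m}$, yet by Lemma \ref{lr1} they are $n$-$\Sigma$-homotopy equivalent only when $up=vp$; consequently each $\Theta_u$ is a proper subclass of $C^{\mathrm{ex}}_{n\mbox{-}\Sigma}(\iota(\mathcal{C}))$, and by Remark \ref{lr2} the full class need not carry an $n$-angulation at all. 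Since your closure-under-translation argument and, crucially, your conclusion that the mapping cone lies in $\Theta$ (``exact and periodic, hence in $C^{\mathrm{ex}}_{n\mbox{-}\Sigma}(\iota(\mathcal{C}))=\Theta$'') both rest on this identification, strong fullness is not established as written.

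The same problem resurfaces in your lifting step: ``choosing the lift compatibly with the $\Sigma$-periodic structure'' is exactly the hard point, and the comparison theorem does not supply periodic lifts (this failure is precisely why distinct $n$-angulations $\Theta_u$ can coexist on one pair $(\mathcal{C},\Sigma)$). The repair is already contained in your construction but goes unused: $T_M=M\otimes_A({}_1A_{\sigma^{-1}})\otimes_AP_\bullet$ is functorial and exact in $M$, and takes values in projective-injectives, so it defines a triangle functor $T:\underline{\mbox{mod}}\,A\rightarrow K^{\mathrm{ex}}_{n\mbox{-}\Sigma}(\iota(\mathcal{C}))$ with $Z_1T=\mathrm{Id}$. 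Then periodic lifts of $h$ are given by functoriality as $T(h)$ rather than by the comparison theorem; $\Theta=T(\underline{\mbox{mod}}\,A)$ is closed under translation because $T$ commutes with suspension; and $C(T(h))\cong T(L)$ in $K^{\mathrm{ex}}_{n\mbox{-}\Sigma}(\iota(\mathcal{C}))$, where $L$ is a cone of $h$ in $\underline{\mbox{mod}}\,A$, so the mapping cone genuinely lies in $\Theta$. This is exactly the paper's route: Corollary \ref{cor2} (proved via Corollary \ref{cor1}) applied to the exact sequence of endofunctors $0\rightarrow\mathrm{Id}\rightarrow-\otimes_A({}_1A_{\sigma^{-1}})\otimes_AP_n\rightarrow\cdots\rightarrow-\otimes_A({}_1A_{\sigma^{-1}})\rightarrow0$, which is how Theorem \ref{4.2} is actually deduced.
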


This paper is organized as follows.
In Section 2, we recall the definition of $n$-angulated category and prove some lemmas.

In Section 3, we show how to obtain $n$-angulations of $(\mathcal{C},\Sigma)$ using $n$-$\Sigma$-periodic injective resolutions. We first develop some needed properties on $n$-$\Sigma$-periodic injective resolution to provide some new views on $n$-angles. Then we prove Theorem \ref{thm1} and Theorem \ref{thm2}. As applications, we get Corollary \ref{cor1} and Corollary \ref{cor2}, which is a higher version of a result of Amiot \cite[Theorem 8.1]{[Am]}.

The last two sections are devoted to demonstrate that our approach is general, both for known examples and new examples.

In Section 4,  we give a new point of view on known examples including  algebraic triangulated categories, the standard construction of $n$-angulated categories given in \cite{[GKO]} and the $n$-angulated categories given in \cite{[BJT]}. Therefore, our ideas indeed present a general framework to unify the known constructions of $n$-angulated categories.

 In Section 5, we provide some new examples of $n$-angulated categories. We first discuss the $n$-angulated structure on semisimple categories and give some new characterizations of semisimple categories; see Theorem \ref{thm0} and Corollary \ref{cor3}. Then we apply  Corollary \ref{cor2} to obtain a large class of new examples of $n$-angulated categories from quasi-periodic selfinjective algebras; see Theorem \ref{4.2}. To close this section, we construct other $n$-angulations from known ones. We show that the group of global automorphisms of $(\mathcal{C},\Sigma)$ acts on the set of $n$-angulations of $(\mathcal{C},\Sigma)$ from the right; see Proposition \ref{prop}.

\section{Definitions and preliminaries}
Throughout this paper, we assume that $\mathcal{C}$ is an additive category  equipped with an automorphism $\Sigma:\mathcal{C}\rightarrow\mathcal{C}$.
We denote by  $\mathcal{C}(X,Y)$  the set of morphisms from $X$ to $Y$ in $\mathcal{C}$. We denote the composition of $f\in \mathcal{C}(X,Y)$ and $g\in \mathcal{C}(Y,Z)$ by $gf\in \mathcal{C}(X,Z)$. We denote by $\mbox{mod}\,\mathcal{C}$  the category of finitely presented functors from $\mathcal{C}^{op}$ to the category Ab of abelian groups and by $\mbox{proj}\,\mathcal{C}$ the subcategory of $\mbox{mod}\,\mathcal{C}$ which consists of projective objects. Given a functor $F:\mathcal{C}\rightarrow\mathcal{D}$, we denote by $F(\mathcal{C})$ the essential image of $F$.

\vspace{2mm}
 An $n$-$\Sigma$-$sequence$ in $\mathcal{C}$ is a sequence of morphisms
$$X_\bullet= (X_1\xrightarrow{f_1}X_2\xrightarrow{f_2}X_3\xrightarrow{f_3}\cdots\xrightarrow{f_{n-1}}X_n\xrightarrow{f_n}\Sigma X_1).$$
Its $left\ rotation$ is the $n$-$\Sigma$-sequence
$$X_2\xrightarrow{f_2}X_3\xrightarrow{f_3}X_4\xrightarrow{f_4}\cdots\xrightarrow{f_{n-1}}X_n\xrightarrow{f_n}\Sigma X_1\xrightarrow{(-1)^n\Sigma f_1}\Sigma X_2.$$  We can define $right\ rotation$ of an $n$-$\Sigma$-sequence similarly.
An $n$-$\Sigma$-sequence $X_\bullet$ is $exact$ if the induced sequence
$$\cdots\rightarrow \mathcal{C}(-,X_1)\rightarrow \mathcal{C}(-,X_2)\rightarrow\cdots\rightarrow \mathcal{C}(-,X_n)\rightarrow \mathcal{C}(-,\Sigma X_1)\rightarrow\cdots$$
is exact. A $trivial$ $n$-$\Sigma$-sequence is a sequence of the form $$X\xrightarrow{1}X\rightarrow 0\rightarrow\cdots\rightarrow 0\rightarrow \Sigma X$$
or any of its rotations.
A $morphism$ of $n$-$\Sigma$-sequences is  a sequence of morphisms $\varphi_\bullet=(\varphi_1,\varphi_2,\cdots,\varphi_n)$ such that the following diagram
$$\xymatrix{
X_1 \ar[r]^{f_1}\ar[d]^{\varphi_1} & X_2 \ar[r]^{f_2}\ar[d]^{\varphi_2} & X_3 \ar[r]^{f_3}\ar[d]^{\varphi_3} & \cdots \ar[r]^{f_{n-1}}& X_n \ar[r]^{f_n}\ar[d]^{\varphi_n} & \Sigma X_1 \ar[d]^{\Sigma \varphi_1}\\
Y_1 \ar[r]^{g_1} & Y_2 \ar[r]^{g_2} & Y_3 \ar[r]^{g_3} & \cdots \ar[r]^{g_{n-1}} & Y_n \ar[r]^{g_n}& \Sigma Y_1\\
}$$
commutes where each row is an $n$-$\Sigma$-sequence. It is an {\em isomorphism} if $\varphi_1, \varphi_2, \cdots, \varphi_n$ are all isomorphisms in $\mathcal{C}$.

\begin{defn} (\cite{[GKO]}) Let $\mathcal{C}$ be an additive category and $\Sigma$ an automorphism of $\mathcal{C}$. A collection $\Theta$ of $n$-$\Sigma$-sequences is called a $pre$-$n$-$angulation$ and its elements are called $n$-$angles$ if $\Theta$ satisfies the following axioms:

(N1) (a) The class $\Theta$ is closed under isomorphisms, direct sums and direct summands.

(b) For each object $X\in\mathcal{C}$ the trivial sequence
$$X\xrightarrow{1}X\rightarrow 0\rightarrow\cdots\rightarrow 0\rightarrow \Sigma X$$
belongs to $\Theta$.

(c) For each morphism $f_1:X_1\rightarrow X_2$ in $\mathcal{C}$, there exists an $n$-$\Sigma$-sequence in $\Theta$ whose first morphism is $f_1$.

(N2) An $n$-$\Sigma$-sequence belongs to $\Theta$ if and only if its left rotation belongs to $\Theta$.

(N3) Each commutative diagram
$$\xymatrix{
X_1 \ar[r]^{f_1}\ar[d]^{\varphi_1} & X_2 \ar[r]^{f_2}\ar[d]^{\varphi_2} & X_3 \ar[r]^{f_3}\ar@{-->}[d]^{\varphi_3} & \cdots \ar[r]^{f_{n-1}}& X_n \ar[r]^{f_n}\ar@{-->}[d]^{\varphi_n} & \Sigma X_1 \ar[d]^{\Sigma \varphi_1}\\
Y_1 \ar[r]^{g_1} & Y_2 \ar[r]^{g_2} & Y_3 \ar[r]^{g_3} & \cdots \ar[r]^{g_{n-1}} & Y_n \ar[r]^{g_n}& \Sigma Y_1\\
}$$ with rows in $\Theta$ can be completed to a morphism of  $n$-$\Sigma$-sequences.

\vspace{2mm}
In this case, we say $(\mathcal{C},\Sigma,\Theta)$ is a $pre$-$n$-$angulated$ $category$. If $\Theta$ moreover satisfies the following axiom, then it is called an $n$-$angulation$ of $(\mathcal{C},\Sigma)$ and  $(\mathcal{C},\Sigma,\Theta)$ is called an $n$-$angulated$ $category$:

\vspace{2mm}
(N4) In the situation of (N3), the morphisms $\varphi_3, \cdots,\varphi_n$ can be chosen such that the mapping cone
$$X_2\oplus Y_1\xrightarrow{\left(
                              \begin{smallmatrix}
                                -f_2 & 0 \\
                                \varphi_2 & g_1 \\
                              \end{smallmatrix}
                            \right)}
 X_3\oplus Y_2 \xrightarrow{\left(
                              \begin{smallmatrix}
                                -f_3 & 0 \\
                                \varphi_3 & g_2 \\
                              \end{smallmatrix}
                            \right)}
 \cdots \xrightarrow{\left(
                            \begin{smallmatrix}
                               -f_n & 0 \\
                                \varphi_n & g_{n-1} \\
                             \end{smallmatrix}
                           \right)}
 \Sigma X_1\oplus Y_n \xrightarrow{\left(
                              \begin{smallmatrix}
                                -\Sigma f_1 & 0 \\
                                \Sigma\varphi_1 & g_n \\
                              \end{smallmatrix}
                            \right)}
 \Sigma X_2\oplus \Sigma Y_1 \\
$$
belongs to $\Theta$.
\end{defn}

In the rest of this section, we will collect some facts on pre-$n$-angulated categories.

\begin{lem}(\cite[Proposition 2.5]{[GKO]})\label{1.1}
Let $(\mathcal{C}, \Sigma, \Theta)$ be a pre-$n$-angulated category. Then the following hold.

(a) All $n$-angles in $\mathcal{C}$ are exact.

(b) The category $\mbox{mod}\,\mathcal{C}$ is an abelian Frobenius category.
\end{lem}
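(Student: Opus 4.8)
The plan is to establish the homological property (a) first and then bootstrap it, together with its contravariant companion, into the Frobenius property (b).

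For (a), begin by showing consecutive morphisms compose to zero. Given an $n$-angle $X_\bullet=(X_1\xrightarrow{f_1}\cdots\xrightarrow{f_n}\Sigma X_1)$, form a morphism of $n$-$\Sigma$-sequences from the trivial sequence $X_1\xrightarrow{1}X_1\to0\to\cdots\to\Sigma X_1$ to $X_\bullet$ by taking $\varphi_1=1_{X_1}$ and $\varphi_2=f_1$; axiom (N3) completes it, and the commuting square at the third position forces $f_2f_1=0$. Rotating via (N2) gives $f_{i+1}f_i=0$ throughout, so the induced sequence of functors $\mathcal{C}(-,X_i)$ is a complex. For exactness it suffices, again by (N2), to verify one spot: that every $a\colon T\to X_2$ with $f_2a=0$ factors through $f_1$. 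I would map the trivial sequence $T\xrightarrow{1}T\to0\to\cdots\to\Sigma T$ into $X_\bullet$ with $\varphi_2=a$ and $\varphi_3=\cdots=\varphi_n=0$; the hypothesis $f_2a=0$ is exactly the commutativity of the square at the second position. Rotating so that this becomes the first square, (N3) produces the remaining components, in particular a map $b\colon T\to X_1$, and the square lying over the identity morphism returns $a=f_1b$ (using that $\Sigma$ is faithful). This is the homological property.

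Turn to (b). By (N1)(c) every morphism can be placed as the second morphism of an $n$-angle (extend it and rotate), so (a) shows its predecessor is a weak kernel; thus $\mathcal{C}$ has weak kernels, and by the Auslander--Freyd criterion $\mathrm{mod}\,\mathcal{C}$ is abelian with the representable functors $\mathcal{C}(-,X)$ as its projectives (Yoneda). Next I would record the contravariant analogue of (a): for every object $Z$ the sequence $\mathcal{C}(X_\bullet,Z)$ is exact. This is proved by the same device, now mapping $X_\bullet$ into a trivial sequence built on $Z$: given $\beta\colon X_2\to Z$ with $\beta f_1=0$, the morphism with $\varphi_1=0$ and $\varphi_2=\beta$ satisfies the hypothesis of (N3) directly, and the square at the third position returns $\gamma\colon X_3\to Z$ with $\beta=\gamma f_2$.

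To conclude, take $M\in\mathrm{mod}\,\mathcal{C}$ with presentation $\mathcal{C}(-,X_1)\to\mathcal{C}(-,X_2)\to M\to0$, extend $f_1\colon X_1\to X_2$ to an $n$-angle, and use (a) to splice the resulting periodic complex into a projective resolution of $M$ by representables. Applying $\mathrm{Hom}_{\mathrm{mod}\,\mathcal{C}}(-,\mathcal{C}(-,Z))$ together with the Yoneda identification $\mathrm{Hom}(\mathcal{C}(-,X_i),\mathcal{C}(-,Z))\cong\mathcal{C}(X_i,Z)$ carries this resolution to the complex $\mathcal{C}(X_\bullet,Z)$, which is exact by the contravariant property; hence $\mathrm{Ext}^{>0}(M,\mathcal{C}(-,Z))=0$, i.e.\ every representable is injective. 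Since (a) also embeds each $M$ as the image of $(f_2)_*$ inside $\mathcal{C}(-,X_3)$, with finitely presented cokernel, $\mathrm{mod}\,\mathcal{C}$ has enough injectives; and any injective, embedding into a representable along a split monomorphism, is a summand of a representable and therefore projective. Thus projectives and injectives coincide and $\mathrm{mod}\,\mathcal{C}$ is Frobenius. The step I expect to be the main obstacle is precisely this passage from (a) to its contravariant form and the resulting vanishing of $\mathrm{Ext}^{>0}$ into representables: it is here that the invertibility of $\Sigma$, and the $n$-$\Sigma$-periodicity making syzygies and cosyzygies match, does the real work.
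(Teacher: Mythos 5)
Your proposal is correct, and it is essentially the standard argument: the paper itself gives no proof of this lemma (it quotes \cite[Proposition 2.5]{[GKO]}), and your reconstruction — the rotation-plus-(N3) trick against trivial $n$-$\Sigma$-sequences to get both the covariant and contravariant exactness, then weak kernels for abelianness, then the periodic projective resolution together with the Yoneda identification to kill $\mathrm{Ext}^{>0}(-,\mathcal{C}(-,Z))$ and make projectives and injectives coincide — is precisely the proof in that reference. It also matches the adaptation of that argument which the paper itself carries out later in Lemma \ref{lem2}(a), so there is nothing to correct; in particular you rightly isolated the contravariant exactness as the point where the covariant definition of ``exact'' alone does not suffice.
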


\begin{lem}\label{lem1.1}
Let $X_1\xrightarrow{f_1}X_2\xrightarrow{f_2}\cdots\xrightarrow{f_{n-1}}X_{n}\xrightarrow{f_n}\Sigma X_1$ be an $n$-angle. Then the following statements are equivalent.

(a) $f_1$ is a split monomorphism;

(b) $f_{n-1}$ is a split epimorphism;

(c) $f_n=0$.
\end{lem}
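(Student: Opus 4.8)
The plan is to show that all three conditions are equivalent to the vanishing of the ``periodic kernel'' computed in the Frobenius category $\mathrm{mod}\,\mathcal{C}$. Applying the Yoneda embedding $\iota$ and writing $P_i=\mathcal{C}(-,X_i)$, the objects $P_i$ are projective by construction and injective because $\mathrm{mod}\,\mathcal{C}$ is Frobenius (Lemma \ref{1.1}(b)); since the $n$-angle is exact (Lemma \ref{1.1}(a)), the morphisms $\bar f_i:=\mathcal{C}(-,f_i)$ form an exact, $n$-$\Sigma$-periodic complex $\cdots\to P_1\xrightarrow{\bar f_1}P_2\to\cdots\to P_n\xrightarrow{\bar f_n}\Sigma P_1\to\cdots$ of projective-injective objects. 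Because $\iota$ is fully faithful it reflects the properties ``split monomorphism'', ``split epimorphism'' and ``zero'', so it suffices to prove the corresponding statements for $\bar f_1$, $\bar f_{n-1}$ and $\bar f_n$.

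First I would dispose of the pair (b) and (c) directly from exactness, since these need nothing beyond Lemma \ref{1.1}(a). Evaluating the exact sequence $\mathcal{C}(X_n,X_{n-1})\xrightarrow{(f_{n-1})_*}\mathcal{C}(X_n,X_n)\xrightarrow{(f_n)_*}\mathcal{C}(X_n,\Sigma X_1)$ at the identity $1_{X_n}$ shows at once that $f_n=0$ forces $(f_{n-1})_*$ to be surjective, hence produces a section of $f_{n-1}$; conversely a section of $f_{n-1}$ makes $(f_{n-1})_*$ surjective on every test object, so $(f_n)_*=0$ and thus $f_n=(f_n)_*(1_{X_n})=0$.

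For the pair (a) and (c) I would argue inside $\mathrm{mod}\,\mathcal{C}$. Set $M=\ker\bar f_1$; by $\Sigma$-periodicity and exactness one identifies $\mathrm{im}\,\bar f_n=\ker(\Sigma\bar f_1)=\Sigma M$ and $\mathrm{coker}\,\bar f_{n-1}\cong\mathrm{im}\,\bar f_n=\Sigma M$. Now (a) says $\bar f_1$ is a split monomorphism, which, because its source $P_1$ is injective, is equivalent to $\bar f_1$ being a monomorphism, i.e. to $M=0$; and (c) says $\bar f_n=0$, i.e. $\mathrm{im}\,\bar f_n=\Sigma M=0$, which is equivalent to $M=0$ since $\Sigma$ is an automorphism. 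The same computation re-proves (b): $\bar f_{n-1}$ is a split epimorphism iff it is an epimorphism (its target $P_n$ being projective), iff $\mathrm{coker}\,\bar f_{n-1}=\Sigma M=0$. Hence all three conditions amount to $M=0$.

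The main obstacle I anticipate is the ``split'' half of conditions (a) and (b): bare exactness of the $n$-angle only yields that $f_1$ is a monomorphism, respectively that $f_{n-1}$ is an epimorphism, and upgrading these to split maps is exactly where the Frobenius structure of $\mathrm{mod}\,\mathcal{C}$ is indispensable, through the two facts that a monomorphism out of an injective object splits and an epimorphism onto a projective object splits. A secondary point requiring care is the index bookkeeping for the periodic complex, namely the identification of $\mathrm{im}\,\bar f_n$ and $\mathrm{coker}\,\bar f_{n-1}$ with $\Sigma M$; once the periodicity is written out correctly these are immediate. If one prefers to avoid passing through $\mathrm{mod}\,\mathcal{C}$ explicitly, the implication (c)$\Rightarrow$(a) can instead be obtained from the dual contravariant exactness of $\mathcal{C}(X_\bullet,W)$, which itself follows from Lemma \ref{1.1}(b) by applying $\mathrm{Hom}_{\mathrm{mod}\,\mathcal{C}}(-,\mathcal{C}(-,W))$ to the exact complex of projectives and using that the injective object $\mathcal{C}(-,W)$ renders this functor exact.
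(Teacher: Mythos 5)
Your proof is correct, but it takes a genuinely different route from the paper's. The paper argues purely axiomatically: for (a)$\Rightarrow$(c) it uses a retraction $f_1'$ of $f_1$ together with (N3) to build a morphism from the given $n$-angle to the trivial $n$-angle $X_1\xrightarrow{1}X_1\rightarrow0\rightarrow\cdots\rightarrow0\rightarrow\Sigma X_1$, and commutativity of the last square forces $f_n=0$; for (c)$\Rightarrow$(a) it uses (N2) to rotate, (N3) to complete the evident partial morphism, and reads off a retraction of $f_1$ from the resulting first square; (b)$\Leftrightarrow$(c) is dual. In particular, the paper needs only (N1)(b), (N2) and (N3), not the exactness of $n$-angles and not the Frobenius property of $\mbox{mod}\,\mathcal{C}$. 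You instead pass through $\mbox{mod}\,\mathcal{C}$ and reduce all three conditions to the vanishing of $M=\mbox{ker}\,\mathcal{C}(-,f_1)$: you invoke Lemma \ref{1.1}(a) (exactness) throughout, and Lemma \ref{1.1}(b) (Frobenius) to upgrade ``$\mathcal{C}(-,f_1)$ is mono'' to ``split mono'' (a monomorphism out of an injective splits) and ``$\mathcal{C}(-,f_{n-1})$ is epi'' to ``split epi'' (an epimorphism onto a projective splits). This is legitimate in context and not circular, since Lemma \ref{1.1} is quoted from \cite{[GKO]} independently of the present lemma. What the paper's argument buys is economy and self-containedness: it works directly from the axioms of a pre-$n$-angulated category with no homological input. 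What your argument buys is a conceptual unification that anticipates the paper's Section 3 viewpoint: an $n$-angle is an $n$-$\Sigma$-periodic injective resolution of $M=Z_1X_\bullet$, and conditions (a), (b), (c) each say exactly that $M=0$; moreover your evaluation-at-$1_{X_n}$ argument for (b)$\Leftrightarrow$(c) is the most economical step of all, needing nothing beyond exactness, and your closing alternative for (c)$\Rightarrow$(a) --- deriving contravariant exactness of $\mathcal{C}(X_\bullet,W)$ by applying $\mbox{Hom}_{\mbox{\scriptsize mod}\,\mathcal{C}}(-,\mathcal{C}(-,W))$ to the exact complex of projectives and using injectivity of $\mathcal{C}(-,W)$ --- is also correct.
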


\begin{proof}
We only prove (a) $\Leftrightarrow$ (c), since we can prove (b) $\Leftrightarrow$ (c) similarly. If  $f_1$ is a split monomorphism, then there exists a morphism $f_1':X_2\rightarrow X_1$ such that $f_1'f_1=1$. By (N3), we obtain the following commutative diagram
$$\xymatrix{
X_1 \ar[r]^{f_1}\ar@{=}[d] & X_2 \ar[r]^{f_2}\ar[d]^{f'_1} & X_3 \ar[r]^{f_3}\ar@{-->}[d] & \cdots \ar[r]^{f_{n-1}}& X_n \ar[r]^{f_n}\ar@{-->}[d] & \Sigma X_1 \ar@{=}[d]\\
X_1 \ar[r]^{1} & X_1 \ar[r] & 0 \ar[r] & \cdots \ar[r] & 0 \ar[r]& \Sigma X_1\\
}$$
whose rows are $n$-angles. Thus $f_n=0$. Conversely, if  $f_n=0$, then the diagram $$\xymatrix{
X_1 \ar[r]^{f_1}\ar@{=}[d] & X_2 \ar[r]^{f_2} & X_3 \ar[r]^{f_3} & \cdots \ar[r]^{f_{n-1}}& X_n \ar[r]^{f_n}\ar[d] & \Sigma X_1 \ar@{=}[d]\\
X_1 \ar[r]^{1} & X_1 \ar[r] & 0 \ar[r] & \cdots \ar[r] & 0 \ar[r]& \Sigma X_1\\
}$$ can be completed as a morphism of $n$-angles by (N2) and (N3), which implies that $f_1$ is a split monomorphism.
\end{proof}

\begin{lem}\label{lem1.2}
Let $f:X\rightarrow Y$ be a morphism in a pre-$n$-angulated category $\mathcal{C}$.

(a) If $f$ is a monomorphism, then $f$ is a split monomorphism;

(b)  If $f$ is an epimorphism, then $f$ is a split epimorphism.
\end{lem}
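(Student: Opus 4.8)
The plan is to reduce both statements to the criterion of Lemma \ref{lem1.1}: in any $n$-angle the last morphism $f_n$ vanishes precisely when the first morphism is a split monomorphism, and also precisely when the $(n-1)$st morphism is a split epimorphism. So for (a) it suffices to realise $f$ as the first morphism $f_1$ of an $n$-angle and prove $f_n=0$, while for (b) it suffices to realise $f$ as the $(n-1)$st morphism of an $n$-angle and again prove $f_n=0$. Both placements are available: axiom (N1)(c) realises $f$ as $f_1$, and two left rotations, which remain $n$-angles by (N2), move it into position $n-1$ up to the automorphism $\Sigma$ and a sign $(-1)^n$, neither of which affects being a monomorphism/epimorphism or a split monomorphism/split epimorphism. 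Throughout I will use Lemma \ref{1.1}(a), that every $n$-angle is exact and hence in particular a complex, so that consecutive morphisms compose to zero (apply the Yoneda lemma to the vanishing composite of the associated representable functors).

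For (a), suppose $f$ is a monomorphism and embed it as $f_1$ in an $n$-angle $X_\bullet$. Right-rotating $X_\bullet$, the inverse operation to (N2), yields an $n$-angle whose first morphism is $(-1)^n\Sigma^{-1}f_n\colon \Sigma^{-1}X_n\to X_1$, immediately followed by $f_1$. The complex property at $X_1$ then gives $f_1\bigl((-1)^n\Sigma^{-1}f_n\bigr)=0$, that is $f\,\Sigma^{-1}f_n=0$. Since $f$ is a monomorphism I cancel it on the left to obtain $\Sigma^{-1}f_n=0$, and since $\Sigma$ is an automorphism this forces $f_n=0$. Lemma \ref{lem1.1}, implication (c)$\Rightarrow$(a), then shows that $f=f_1$ is a split monomorphism.

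For (b), suppose $f$ is an epimorphism. After the two left rotations described above, a unit multiple $(-1)^n\Sigma f$ of $f$ becomes the $(n-1)$st morphism $f_{n-1}$ of an $n$-angle $X_\bullet$, and it is again an epimorphism. The complex property at $X_n$ gives $f_n f_{n-1}=0$, and since $f_{n-1}$ is an epimorphism I cancel it on the right to obtain $f_n=0$. Lemma \ref{lem1.1}, implication (c)$\Rightarrow$(b), then shows that $f_{n-1}$, and hence $f$, is a split epimorphism.

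The two arguments are genuinely dual, and the only point that requires care is selecting the correct side for cancellation: a monomorphism cancels on the left, so in (a) I must exhibit $f$ as the \emph{left} factor of a zero composite, which is exactly why the morphism into $X_1$ supplied by the right rotation is needed; an epimorphism cancels on the right, so in (b) I must exhibit $f$ as the \emph{right} factor, which is why $f$ is moved to position $n-1$ (where $f_n f_{n-1}=0$) rather than left in position $1$ (where the relation $f_2 f_1=0$ places $f$ on the unusable side). I expect this placement bookkeeping, together with the routine check that rotation, the shift $\Sigma$, and the sign preserve the mono/epi and splitting properties, to be the only mild obstacle; everything else follows at once from Lemmas \ref{1.1} and \ref{lem1.1}.
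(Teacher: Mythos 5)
Your proof is correct and takes essentially the same route as the paper: part (a) is the paper's argument verbatim (embed $f$ as $f_1$ via (N1)(c), use exactness of the $n$-angle to get $f\cdot\Sigma^{-1}f_n=0$, cancel the monomorphism, and invoke Lemma \ref{lem1.1}), while in part (b) you simply make explicit, via two left rotations and the harmless twist by $\Sigma$ and the sign $(-1)^n$, the dual argument that the paper compresses into ``We can prove (b) dually.'' No gaps.
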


\begin{proof}
(a) By (N1)(c), we assume that $X\xrightarrow{f}Y\xrightarrow{f_2}X_3\xrightarrow{f_3}\cdots\xrightarrow{f_{n-1}}X_{n}\xrightarrow{f_n}\Sigma X$ is an $n$-angle. Since $f\cdot\Sigma^{-1}f_n=0$ and $f$ is a monomorphism, we have $\Sigma^{-1}f_n=0$. Thus $f_n=0$. By Lemma \ref{lem1.1},  $f$ is a split monomorphism.
We can prove (b) dually.
\end{proof}

\section{Periodic injective resolutions and $n$-angulations}

In this section, we always assume that $\mbox{mod}\,\mathcal{C}$ is a Frobenius category, which may be implied in other condition; see Lemma \ref{lem2}(a). Thus the stable category $\underline{\mbox{mod}}\,\mathcal{C}$ is a triangulated category with the suspension functor $\Omega^{-1}$. The automorphism $\Sigma$ of $\mathcal{C}$ induces an automorphism $\Sigma$ of $\mbox{mod}\,\mathcal{C}$ by mapping  $M$ to $M\cdot \Sigma^{-1}$.
The Yoneda functor $\iota:\mathcal{C}\rightarrow \mbox{proj}\,\mathcal{C}$, which maps $A$ to $\mathcal{C}(-,A)$, gives a natural equivalence between $\mathcal{C}$ and $\iota(\mathcal{C})$. We note that $\iota(\mathcal{C})$ is not equal to $\mbox{proj}\,\mathcal{C}$ in general unless $\mathcal{C}$ is idempotent complete. For convenience, we identify $\mathcal{C}$ with $\iota(\mathcal{C})$.

\subsection{$n$-$\Sigma$-periodic injective resolutions}

An exact complex $X_\bullet=(X_i, f_i)_{i\in\mathbb{Z}}$ over $\iota(\mathcal{C})$ is called $n$-$\Sigma$-$periodic$ if $X_{k+n}=\Sigma X_k$ and $f_{k+n}=\Sigma f_k$ for all $k\in\mathbb{Z}$. 
Since we have identified $\mathcal{C}$ with $\iota(\mathcal{C})$, 
we can identify an exact $n$-$\Sigma$-sequence in $\mathcal{C}$ as an $n$-$\Sigma$-periodic exact complexe over $\iota(\mathcal{C})$, and vice versa.

A morphism $\varphi_\bullet=(\varphi_i)_{i\in\mathbb{Z}}$ between two $n$-$\Sigma$-periodic exact complexes $X_\bullet$ and $Y_\bullet$ is given by a morphism of complexes such that $\varphi_{k+n}=\Sigma\varphi_k$ for all $k\in\mathbb{Z}$. We denote by $C^{\tiny\mbox{ex}}_{n\mbox{-}\Sigma}(\iota(\mathcal{C}))$ the category of $n$-$\Sigma$-periodic exact complexes over $\iota(\mathcal{C})$.
 Let $X_\bullet=(X_i, f_i)_{i\in\mathbb{Z}}, Y_\bullet=(Y_i, g_i)_{i\in\mathbb{Z}}$ be two objects in $C^{\tiny\mbox{ex}}_{n\mbox{-}\Sigma}(\iota(\mathcal{C}))$, and $\varphi_\bullet,\psi_\bullet$ be two morphisms from $X_\bullet$ to $Y_\bullet$. An $n$-$\Sigma$-$homotopy$ from $\varphi_\bullet$ to $\psi_\bullet$ is  given by morphisms $h_i: X_{i+1}\rightarrow Y_i$ such that $\varphi_i-\psi_i=h_if_i+g_{i-1}h_{i-1}$ and $h_{n+i}=\Sigma h_i$ for all $i\in\mathbb{Z}$. In this case, we say that $\varphi_\bullet$ and $\psi_\bullet$ are $n$-$\Sigma$-$homotopic$. Since $n$-$\Sigma$-homotopy relation is an equivalence relation, we can form the relative homotopy category $K^{\tiny\mbox{ex}}_{n\mbox{-}\Sigma}(\iota(\mathcal{C}))$, by considering the objects are the same as those of $C^{\tiny\mbox{ex}}_{n\mbox{-}\Sigma}(\iota(\mathcal{C}))$ and the additive group of $n$-$\Sigma$-homotopy classes of morphisms from $X_\bullet$ to $Y_\bullet$ in $C^{\tiny\mbox{ex}}_{n\mbox{-}\Sigma}(\iota(\mathcal{C}))$ as the group of morphisms from $X_\bullet$ to $Y_\bullet$ in $K^{\tiny\mbox{ex}}_{n\mbox{-}\Sigma}(\iota(\mathcal{C}))$.

\begin{lem}
The relative homotopy category $K^{\tiny\mbox{ex}}_{n\mbox{-}\Sigma}(\iota(\mathcal{C}))$ is a triangulated category.
\end{lem}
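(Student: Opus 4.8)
The plan is to realise $K^{\tiny\mbox{ex}}_{n\mbox{-}\Sigma}(\iota(\mathcal{C}))$ as the stable category of a Frobenius category and then invoke Happel's theorem. Since $\mbox{mod}\,\mathcal{C}$ is abelian (indeed Frobenius) by the standing hypothesis of this section, and an object of $C^{\tiny\mbox{ex}}_{n\mbox{-}\Sigma}(\iota(\mathcal{C}))$ is just an acyclic complex of projectives in $\mbox{mod}\,\mathcal{C}$ subject to the periodicity $X_{k+n}=\Sigma X_k$, I can carry out ordinary homological algebra inside $\mbox{mod}\,\mathcal{C}$. First I would equip $C^{\tiny\mbox{ex}}_{n\mbox{-}\Sigma}(\iota(\mathcal{C}))$ with the exact structure whose conflations are the $n$-$\Sigma$-periodic short exact sequences $0\to X_\bullet\to Y_\bullet\to Z_\bullet\to 0$ that split in each degree. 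Because every such sequence splits degreewise, Quillen's axioms hold automatically, and the middle term is again acyclic whenever the outer terms are; hence this is a genuine exact category.

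Second, I would identify the projective and the injective objects and show that both classes coincide with the $n$-$\Sigma$-contractible complexes. The engine is the mapping cone: for $X_\bullet\in C^{\tiny\mbox{ex}}_{n\mbox{-}\Sigma}(\iota(\mathcal{C}))$ the cone $C(1_{X_\bullet})$ of the identity is again $n$-$\Sigma$-periodic and acyclic, it is $n$-$\Sigma$-contractible, and it sits in a degreewise split conflation $0\to X_\bullet\to C(1_{X_\bullet})\to X[1]_\bullet\to 0$, where $X[1]_\bullet$ denotes the shift $(X[1])_i=X_{i+1}$ with differential $-f_{i+1}$ (one checks directly that $[1]$ preserves periodicity and is an automorphism of the category, with $[n]=\Sigma$). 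A dual cone construction provides an admissible epimorphism onto $X_\bullet$ from a contractible complex. This yields enough projectives and enough injectives, and a short homotopy computation shows that a complex is $n$-$\Sigma$-contractible if and only if it is projective-injective; consequently the projectives and the injectives coincide, so $C^{\tiny\mbox{ex}}_{n\mbox{-}\Sigma}(\iota(\mathcal{C}))$ is a Frobenius category.

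Third, I would match the stable category with the relative homotopy category. The crucial point is that two morphisms $\varphi_\bullet,\psi_\bullet$ are $n$-$\Sigma$-homotopic exactly when $\varphi_\bullet-\psi_\bullet$ factors through an $n$-$\Sigma$-contractible complex, i.e.\ through a projective-injective object; here one must respect the periodicity constraint $h_{n+i}=\Sigma h_i$ on homotopies, which is precisely what forces the factoring complex to lie in $C^{\tiny\mbox{ex}}_{n\mbox{-}\Sigma}(\iota(\mathcal{C}))$. It then follows that the morphism groups of the stable category agree with the $n$-$\Sigma$-homotopy classes, so $K^{\tiny\mbox{ex}}_{n\mbox{-}\Sigma}(\iota(\mathcal{C}))$ is exactly the stable category of a Frobenius category, hence triangulated by Happel's theorem, with suspension $\Omega^{-1}=[1]$ and distinguished triangles induced by the mapping-cone conflations.

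The main obstacle I anticipate is the second step: constructing enough projective-injectives compatibly with both constraints at once. The usual contractible ``disk'' complexes are not $n$-$\Sigma$-periodic, so they cannot be used directly; one must instead assemble periodic contractible complexes out of cones of identities and verify that the resulting inclusions and projections are genuinely admissible (split in each degree) with kernel and cokernel still acyclic and $n$-$\Sigma$-periodic. Closely tied to this is checking that contractibility in the periodic sense (with $h_{n+i}=\Sigma h_i$) is equivalent to being projective-injective, which is where the periodicity of the homotopy operators has to be tracked with care.
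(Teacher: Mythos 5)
Your proposal is correct and follows essentially the same route as the paper: equip $C^{\tiny\mbox{ex}}_{n\mbox{-}\Sigma}(\iota(\mathcal{C}))$ with the degreewise split exact structure, show it is Frobenius with projective-injectives the $n$-$\Sigma$-contractible complexes, identify the stable category with the relative homotopy category, and invoke Happel. The only difference is that the paper delegates the Frobenius verification to \cite[Proposition 7.1]{[PX]}, whereas you sketch those details (cones of identities, periodicity of homotopies) explicitly.
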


\begin{proof}
Denote by $\mathcal{S}$ the class of all chain-wise split exact sequences $0\rightarrow X_\bullet\xrightarrow{\varphi_\bullet} Y_\bullet\xrightarrow{\psi_\bullet} Z_\bullet\rightarrow 0$ in $C^{\tiny\mbox{ex}}_{n\mbox{-}\Sigma}(\iota(\mathcal{C}))$. Then $(C^{\tiny\mbox{ex}}_{n\mbox{-}\Sigma}(\iota(\mathcal{C})),\mathcal{S})$ is an exact category. Similar to \cite[Proposition 7.1]{[PX]}, we can show that $(C^{\tiny\mbox{ex}}_{n\mbox{-}\Sigma}(\iota(\mathcal{C})),\mathcal{S})$ is a Frobenius category and the projective-injectives are the $n$-$\Sigma$-contractible complexes, whose identity morphisms are $n$-$\Sigma$-homotopic to the zero morphisms.  Thus  the stable category $\underline{C^{\tiny\mbox{ex}}_{n\mbox{-}\Sigma}(\iota(\mathcal{C}))}=K^{\tiny\mbox{ex}}_{n\mbox{-}\Sigma}(\iota(\mathcal{C}))$ is a triangulated category  whose suspension functor is the translation functor.
\end{proof}

\begin{lem}\label{lem1}
 The following hold.

(a) The functor $Z_1: C^{\tiny\mbox{ex}}_{n\mbox{-}\Sigma}(\iota(\mathcal{C}))\rightarrow \mbox{mod}\,\mathcal{C}$,  which sends a complex $X_\bullet=(X_i, f_i)_{i\in\mathbb{Z}}$ to ker$f_1$, induces a triangle functor $\underline{Z_1}: K^{\tiny\mbox{ex}}_{n\mbox{-}\Sigma}(\iota(\mathcal{C}))\rightarrow \underline{\mbox{mod}}\,\mathcal{C}$.

(b) If $\underline{Z_1}(\varphi_\bullet)=0$, then $\varphi_\bullet$ is $n$-$\Sigma$-homotopic to some morphism $\varphi_\bullet'=(0,0,\cdots,0,\varphi_n')$.

(c) The kernel of $\underline{Z_1}$ is an ideal whose square vanishes.

\end{lem}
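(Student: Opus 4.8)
The plan is to dispatch the three parts in order, reducing (c) to (b) and placing the bulk of the work in (a) and (b).

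For (a), I would first record that $Z_1$ is a well-defined additive functor on $C^{\tiny\mbox{ex}}_{n\mbox{-}\Sigma}(\iota(\mathcal{C}))$: since $\mbox{mod}\,\mathcal{C}$ is abelian, $\ker f_1$ is an object of $\mbox{mod}\,\mathcal{C}$, and a chain map $\varphi_\bullet$ restricts along $\varphi_1$ to a morphism $\ker f_1\to\ker g_1$ because $g_1\varphi_1=\varphi_2 f_1$. To see that $Z_1$ descends to $\underline{Z_1}$, I would show it kills $n$-$\Sigma$-homotopies modulo projectives: if $\varphi_i-\psi_i=h_if_i+g_{i-1}h_{i-1}$, then restricting the relation at $i=1$ to $M=\ker f_1$ gives $(\varphi_1-\psi_1)|_M=g_0h_0|_M$, which factors as $M\xrightarrow{h_0|_M}Y_0\xrightarrow{g_0}\ker g_1$ through the projective-injective object $Y_0\in\iota(\mathcal{C})$; hence $\underline{Z_1}(\varphi_\bullet)=\underline{Z_1}(\psi_\bullet)$ in $\underline{\mbox{mod}}\,\mathcal{C}$.

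Next I would supply the triangle structure. Since the translation $X_\bullet[1]$ carries differential $-f_{i+1}$, one has $Z_1(X_\bullet[1])=\ker f_2$, and exactness of $X_\bullet$ gives a short exact sequence $0\to\ker f_1\to X_1\to\ker f_2\to 0$ with $X_1$ injective, which exhibits $\ker f_2\cong\Omega^{-1}(\ker f_1)$ naturally and so yields a natural isomorphism $\eta\colon \underline{Z_1}\circ[1]\cong\Omega^{-1}\circ\underline{Z_1}$. Then I would check that triangles go to triangles: a chain-wise split exact sequence $0\to X_\bullet\to Y_\bullet\to Z_\bullet\to 0$ splits as $Y_i\cong X_i\oplus Z_i$ with differential $\left(\begin{smallmatrix}f_i^X&\kappa_i\\0&f_i^Z\end{smallmatrix}\right)$, and the relation $f_2^X\kappa_1=-\kappa_2 f_1^Z$ coming from $d^2=0$ shows that applying $Z_1$ produces an honest short exact sequence $0\to Z_1X_\bullet\to Z_1Y_\bullet\to Z_1Z_\bullet\to 0$ in $\mbox{mod}\,\mathcal{C}$; by the standard description of triangles in the stable category of a Frobenius category, together with $\eta$, the functor $\underline{Z_1}$ carries the distinguished triangle of the split sequence to that of this short exact sequence. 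The step I expect to cost the most care is matching the connecting morphisms of the two triangulated structures under $\eta$, which I would settle by unwinding both constructions on a chosen splitting.

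For (b), assume $\underline{Z_1}(\varphi_\bullet)=0$, so $Z_1(\varphi_\bullet)\colon M\to N$ (with $M=\ker f_1$, $N=\ker g_1$) factors as $ba$ through a projective $P$. I would build the homotopy one component at a time. First, lift $b$ through the epimorphism $g_0\colon Y_0\twoheadrightarrow N=\mbox{im}\,g_0$ (projectivity of $P$) and extend the resulting map $M\to Y_0$ along $M\hookrightarrow X_1$ (injectivity of $Y_0$) to get $h_0\colon X_1\to Y_0$ with $g_0h_0|_M=Z_1(\varphi_\bullet)$; then $\varphi_1-g_0h_0$ vanishes on $M$, hence factors through $X_1/M\cong\mbox{im}\,f_1=\ker f_2$, and extending along $\ker f_2\hookrightarrow X_2$ gives $h_1\colon X_2\to Y_1$ with $\varphi_1=h_1f_1+g_0h_0$. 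Subtracting the $n$-$\Sigma$-homotopy determined by $h_0,h_1$ kills the first component. Inductively, once $\varphi_i$ is zero the chain-map relation forces $\varphi_{i+1}f_i=0$, so $\varphi_{i+1}$ factors through $\mbox{im}\,f_{i+1}=\ker f_{i+2}$ and extends to $h_{i+1}\colon X_{i+2}\to Y_{i+1}$ by injectivity, and the homotopy supported at $h_{i+1}$ annihilates $\varphi_{i+1}$ without disturbing the earlier zero components. By $n$-$\Sigma$-periodicity this leaves only the $n$-th slot, giving $\varphi_\bullet'=(0,\dots,0,\varphi_n')$.

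For (c), the kernel of $\underline{Z_1}$ is automatically an ideal since $\underline{Z_1}$ is additive. To see its square vanishes, take composable $\varphi_\bullet,\psi_\bullet$ in the kernel; by (b) they are $n$-$\Sigma$-homotopic to normal forms $(0,\dots,0,\varphi_n')$ and $(0,\dots,0,\psi_n')$, so $\psi_\bullet\varphi_\bullet$ is homotopic to the morphism whose only possibly nonzero component is $\psi_n'\varphi_n'$. The chain-map conditions give $g_n\varphi_n'=0$ and $\psi_n'g_{n-1}=0$, whence $\mbox{im}\,\varphi_n'\subseteq\ker g_n=\mbox{im}\,g_{n-1}$ while $\psi_n'$ kills $\mbox{im}\,g_{n-1}$; therefore $\psi_n'\varphi_n'=0$ and $\psi_\bullet\varphi_\bullet$ is $n$-$\Sigma$-homotopic to zero. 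Thus the only genuine obstacle across the lemma is the triangle-functor verification in (a); parts (b) and (c) then reduce to diagram chases together with the periodicity bookkeeping.
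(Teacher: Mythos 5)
Your proposal is correct and takes essentially the same route as the paper: in (a) you use exactness of $Z_1$ on chain-wise split sequences together with the Frobenius structure of $C^{\tiny\mbox{ex}}_{n\mbox{-}\Sigma}(\iota(\mathcal{C}))$ (the paper cites this as the standard fact that an exact functor preserving projective-injectives induces a triangle functor), in (b) you factor the stably-zero map $Z_1(\varphi_\bullet)$ through a projective-injective and then use injectivity of the terms $Y_i$ to push the entire discrepancy into the $n$-th slot, and in (c) you reduce to the normal forms from (b) and kill the composite using exactness of $Y_\bullet$. The only differences are organizational (you subtract homotopy components iteratively where the paper builds the homotopy $(h_1,\dots,h_{n-1},h_n')$ in one pass, and your finish in (c) observes $\mathrm{im}\,\varphi_n'\subseteq\mathrm{im}\,g_{n-1}\subseteq\ker\psi_n'$ directly instead of factoring through $g_ng_{n-1}$), not mathematical.
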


\begin{proof}
(a) By the Snake Lemma, we can show that $Z_1$ is an exact functor. Then $Z_1$ induces a triangle functor $\underline{Z_1}$ since $Z_1$ preserves the projective-injectives.

 (b) Let $k:M\rightarrow X_1$ be the kernel of $f_1$ and $k':N\rightarrow Y_1$ be the kernel of $g_1$. We assume that $\Sigma^{-1}f_n=k\pi$ and $\Sigma^{-1}g_n=k'\pi'$. Since $\underline{Z_1}(\varphi_\bullet)=0$, the morphism $h=Z_1(\varphi_\bullet)$ admits a factorization $M\xrightarrow{a}I\xrightarrow{b}N$, where $I$ is projective-injective. Thus there exist two morphisms $c:X_1\rightarrow I$ and $d:I\rightarrow\Sigma^{-1}Y_n$ such that $a=ck$ and $b=\pi'd$. We put $h'_n=\Sigma (dc)$.
 Now we assume that $f_i:X_i\rightarrow X_{i+1}$ has a factorization $k_i\pi_i: X_i\twoheadrightarrow M_i\rightarrowtail X_{i+1}$ for $i=1,2,\cdots,n-1$.
 Note that $(\varphi_1-\Sigma^{-1}(g_nh_n'))\Sigma^{-1}f_n=0$, hence there exists a morphism $m_1:M_1\rightarrow Y_1$ such that $\varphi_1-\Sigma^{-1}(g_nh_n')=m_1\pi_1$. Since $Y_1$ is projective-injective, there exists a morphism $h_1:X_2\rightarrow Y_1$ such that $m_1=h_1k_1$. Thus $\varphi_1=\Sigma^{-1}(g_nh_n')+h_1f_1$.
Similarly there exist morphisms $h_{i}:X_{i+1}\rightarrow Y_{i}$ for $2\leq i\leq n$ such that $\varphi_i=h_{i}f_i+g_{i-1}h_{i-1}$.
Suppose that $\varphi_n'=(h_{n}-h_n')f_n$, then $\varphi_n-\varphi_n'=g_{n-1}h_{n-1}+h_n' f_n$.
$$\xymatrix{
\Sigma^{-1}X_n \ar[ddd]^{\Sigma^{-1}\varphi_n} \ar[rrr]^{\Sigma^{-1}f_n} \ar@{->>}[rd]^{\pi}  &  &  &  X_1 \ar[rr]^{f_1} \ar[ddd]^{\varphi_1}\ar[ldd]^{c}\ar@{->>}[rd]^{\pi_1} & & X_2 \ar[ddd]^{\varphi_2}\ar[llddd]^{h_1} \ar[r]^{f_2}& \cdots \ar[r]^{f_{n-1}} & X_n \ar[r]^{f_n}\ar[ddd]^{\varphi_n} & \Sigma X_1\ar[ddd]^{\Sigma\varphi_1} \ar[lddd]^{h_{n}}\\
 & M \ar[ddd]^{h} \ar@{>->}[urr]^{k}\ar[rd]^a & & & M_1 \ar@{>->}[ru]^{k_1}\ar[ldd]_{m_1} &  & & & & \\
&  &  I \ar[ldd]^b \ar[lld]_{d} & & & & & &\\
\Sigma^{-1}Y_n \ar[rrr]^{\hspace{18mm}\Sigma^{-1}g_n}\ar@{->>}[rd]^{\pi'} & & & Y_1 \ar[rr]^{g_1} & & Y_2 \ar[r]^{g_2} & \cdots \ar[r]^{g_{n-1}} & Y_n \ar[r]^{g_n} &\Sigma Y_1\\
&  N \ar@{>->}[urr]^{k'}& & & & & & &\\
}$$
Hence the morphism $\varphi_\bullet$ is $n$-$\Sigma$-homotopic to the morphism $\varphi_\bullet'=(0,0,\cdots,0,\varphi_n')$ with an $n$-$\Sigma$-homotopy $(h_1, h_2, \cdots,h_{n-1}, h'_n)$.

(c)
Let $\varphi_\bullet:X_\bullet\rightarrow Y_\bullet$ and $\psi_\bullet: Y_\bullet\rightarrow Z_\bullet$ be two morphisms in the kernel of $\underline{Z_1}$. Up to $n$-$\Sigma$-homotopy, we assume that $\varphi_\bullet=(0,0,\cdots,0,\varphi_n)$ and $\psi_\bullet=(0,0,\cdots,0,\psi_n)$. Thus we get the following commutative diagram
$$\xymatrix{
X_1 \ar[r]^{f_1}\ar[d]^{0} & X_2 \ar[r]^{f_2}\ar[d]^{0} & \cdots \ar[r]^{f_{n-2}}& X_{n-2} \ar[r]^{f_{n-1}}\ar[d]^{0}& X_n \ar[r]^{f_n}\ar[d]^{\varphi_n}\ar@{-->}[ld]^{a_n} & \Sigma X_1 \ar[d]^{0}\\
Y_1 \ar[r]^{g_1}\ar[d]^0 & Y_2 \ar[r]^{g_2} \ar[d]^0 & \cdots \ar[r]^{g_{n-2}}& Y_{n-1} \ar[r]^{g_{n-1}}\ar[d]^0 & Y_n \ar[r]^{g_n}\ar[d]^{\psi_n}& \Sigma Y_1\ar[d]^0\ar@{-->}[ld]^{b_{n+1}}\\
Z_1\ar[r]^{h_1} & Z_2 \ar[r]^{h_2}  & \cdots \ar[r]^{h{n-2}} & Z_{n-1} \ar[r]^{h_{n-1}} & Z_n \ar[r]^{h_n} & \Sigma Z_1. \\
}$$
Since $g_n\varphi_n=0$ and $\psi_ng_{n-1}=0$, $\varphi_n$ factors through $g_{n-1}$ and $\psi_n$ factors through $g_n$. Thus $\psi_n\varphi_n=b_{n+1}g_ng_{n-1}a_n=0$. Therefore, $\psi_\bullet\varphi_\bullet=0$.
\end{proof}

\begin{lem}\label{lem2}

 Let  $\Theta$ be a full subcategory of $C^{\tiny\mbox{ex}}_{n\mbox{-}\Sigma}(\iota(\mathcal{C}))$ and $\varphi_\bullet: X_\bullet\rightarrow Y_\bullet$ be a morphism in $\Theta$.

(a) If the functor $Z_1: \Theta\rightarrow \mbox{mod}\,\mathcal{C}$ is full, then $\mbox{mod}\,\mathcal{C}$ is Frobenius.

(b) If the functor $Z_1: \Theta\rightarrow \mbox{mod}\,\mathcal{C}$ is full, then $Z_1(\varphi_\bullet)$ is an isomorphism in $\underline{\mbox{mod}}\,\mathcal{C}$ if and only if $\varphi_\bullet$ is an $n$-$\Sigma$-homotopy equivalence.

(c) If the functor $Z_1: \Theta\rightarrow \mbox{mod}\,\mathcal{C}$ is full and dense, then for each $M\in\mbox{mod}\,\mathcal{C}$, up to $n$-$\Sigma$-homotopy equivalence, there exists a unique $T_M\in\Theta$ such that $Z_1T_M\cong M$.
\end{lem}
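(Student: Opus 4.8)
My plan is to prove the three parts in the order (b), (c), (a): part (b) is the engine, part (c) is a formal consequence of it together with density, and part (a) — whose point is precisely to show that the standing Frobenius hypothesis is not an extra assumption — is the least routine and the one I expect to fight with.

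For part (b) the direction ``$n$-$\Sigma$-homotopy equivalence $\Rightarrow$ $\underline{Z_1}$-isomorphism'' is immediate: by definition an $n$-$\Sigma$-homotopy equivalence is an isomorphism in $K^{\tiny\mbox{ex}}_{n\mbox{-}\Sigma}(\iota(\mathcal{C}))$, and since $\underline{Z_1}$ is a functor to $\underline{\mbox{mod}}\,\mathcal{C}$ by Lemma \ref{lem1}(a), it carries isomorphisms to isomorphisms. For the converse I would use fullness to manufacture a homotopy inverse. Assuming $Z_1(\varphi_\bullet)$ is invertible in $\underline{\mbox{mod}}\,\mathcal{C}$, I pick a representative $h\colon Z_1Y_\bullet\to Z_1X_\bullet$ in $\mbox{mod}\,\mathcal{C}$ of its inverse and, using that $Z_1\colon\Theta\to\mbox{mod}\,\mathcal{C}$ is full, lift $h$ to a morphism $\psi_\bullet\colon Y_\bullet\to X_\bullet$ with $Z_1(\psi_\bullet)=h$. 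Then $\underline{Z_1}(\psi_\bullet\varphi_\bullet-1)=0$ and $\underline{Z_1}(\varphi_\bullet\psi_\bullet-1)=0$, so $a:=\psi_\bullet\varphi_\bullet-1$ and $b:=\varphi_\bullet\psi_\bullet-1$ lie in $\ker\underline{Z_1}$. By Lemma \ref{lem1}(c) this kernel is a square-zero ideal, whence $a^2=0=b^2$ and $1+a$, $1+b$ are invertible in $K^{\tiny\mbox{ex}}_{n\mbox{-}\Sigma}(\iota(\mathcal{C}))$ with inverses $1-a$, $1-b$. Thus $\psi_\bullet\varphi_\bullet$ and $\varphi_\bullet\psi_\bullet$ are invertible, which forces $\varphi_\bullet$ to be simultaneously a split monomorphism and a split epimorphism, hence an isomorphism in the homotopy category, i.e. an $n$-$\Sigma$-homotopy equivalence.

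Part (c) then drops out. Density of $Z_1$ furnishes, for each $M$, some $T_M\in\Theta$ with $Z_1T_M\cong M$, which is existence. For uniqueness, if $T,T'\in\Theta$ both satisfy $Z_1T\cong M\cong Z_1T'$, I compose these to an isomorphism $h\colon Z_1T\to Z_1T'$ in $\mbox{mod}\,\mathcal{C}$; since $h$ is in particular invertible in $\underline{\mbox{mod}}\,\mathcal{C}$, lifting it by fullness to $\varphi_\bullet\colon T\to T'$ and applying part (b) shows $\varphi_\bullet$ is an $n$-$\Sigma$-homotopy equivalence, so $T$ and $T'$ are $n$-$\Sigma$-homotopy equivalent.

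The real work is part (a), which I expect to be the main obstacle, since it has to be argued without leaning on the Frobenius structure it is meant to supply. The objects of $\iota(\mathcal{C})$ are projective in $\mbox{mod}\,\mathcal{C}$ and there are enough of them, so it suffices to show they are also injective; then projectives and injectives coincide and $\mbox{mod}\,\mathcal{C}$ is Frobenius. The idea is to exploit that each $X_\bullet\in\Theta$ is an \emph{exact} complex of representables, so that reading it to the right exhibits each syzygy $\mbox{ker}f_i$ together with a coresolution by representables, and to use the lifting property provided by fullness of $Z_1$ to extend morphisms into representables along these complexes — adapting the argument of \cite[Proposition 2.5(b)]{[GKO]}, in which the $n$-angles played the role now played by the complexes in $\Theta$. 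The delicate point I anticipate is verifying that fullness of $Z_1$ alone, rather than the full list of axioms (N1)--(N3), already yields enough such injective copresentations to conclude that every representable functor is injective.
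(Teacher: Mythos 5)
Your parts (b) and (c) are correct and essentially coincide with the paper's argument. For (b) the paper simply cites \cite[Lemma 8.6]{[Am]} (a full functor whose kernel is an ideal of square zero detects isomorphisms) together with Lemma \ref{lem1}(c); you inline the proof of that citation — lifting a representative of the stable inverse by fullness, observing that $\psi_\bullet\varphi_\bullet-1$ and $\varphi_\bullet\psi_\bullet-1$ lie in the square-zero kernel, and inverting $1+a$ by $1-a$ — which is exactly the standard argument behind Amiot's lemma. Your deduction of (c) from density plus (b) is also the paper's ("a direct consequence of (b)").

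The genuine gap is part (a): you state a strategy and explicitly leave open the ``delicate point'', but that point \emph{is} the content of the statement, so nothing is proved. Moreover, your suspicion that fullness alone is insufficient is correct, and the paper itself does not argue from fullness: its proof of (a) uses \emph{density}. Concretely, for each $M\in\mbox{mod}\,\mathcal{C}$ density produces $X_\bullet\in\Theta$ with $Z_1X_\bullet\cong\Sigma^{-1}M$, so that $X_\bullet[-1]$ is an $n$-$\Sigma$-periodic projective resolution of $M$; the paper then computes, via the Yoneda Lemma, $\mbox{Ext}^i(M,\mathcal{C}(-,A))\cong H^i(X_\bullet[-1],\mathcal{C}(-,A))=0$ for $i\geq 1$, concluding that every projective $\mathcal{C}(-,A)$ is injective, and invokes density once more to get enough injectives, whence $\mbox{mod}\,\mathcal{C}$ is Frobenius. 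Fullness by itself can never yield this: if $\Theta$ is empty, or consists only of contractible complexes, then $Z_1:\Theta\rightarrow\mbox{mod}\,\mathcal{C}$ is (vacuously or easily) full while $\mbox{mod}\,\mathcal{C}$ can fail to be Frobenius; so the missing idea in your plan is precisely the use of density to manufacture, for an \emph{arbitrary} $M\in\mbox{mod}\,\mathcal{C}$, a periodic resolution by objects of $\iota(\mathcal{C})$ against which $\mbox{Ext}$ into representables can be computed. (The mismatch between the hypothesis ``full'' in the statement and ``dense'' in the proof is an inconsistency of the paper; your proposal inherits it rather than resolves it, since you attempt to argue from fullness, which is the one route that cannot close.)
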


\begin{proof}
(a) This is an adaptation of the proof of \cite[Propostion 2.5(b)]{[GKO]}. It is clear that $\mbox{mod}\,\mathcal{C}$ has enough projectives.  We claim that each projective $\mathcal{C}(-,A)$ is an injective. Indeed, for each $M\in\mbox{mod}\,\mathcal{C}$, we note that $\Sigma^{-1}M\in\mbox{mod}\,\mathcal{C}$. Since $Z_1$ is dense, there exists an object $X_\bullet\in \Theta$ such that $Z_1X_\bullet\cong\Sigma^{-1}M$. Thus $X_\bullet[-1]$ is an $n$-$\Sigma$-periodic projective resolution of $M$. For each $i\geq1$, by the Yoneda Lemma, it is not hard to see that Ext$^i(M, \mathcal{C}(-,A))\cong H^i(X_{\bullet}[-1],\mathcal{C}(-,A))=0$ since $X_\bullet[-1]$ is exact. Consequently, each projective is an injective. Since $Z_1$ is dense, it is easy to see that $\mbox{mod}\,\mathcal{C}$ has enough injectives, moreover, the projectives and injectives coincide. Therefore, $\mbox{mod}\,\mathcal{C}$ is Frobenius.

(b) The ``if" part is obvious. For the ``only if" part,  we only need to show that the functor $\underline{Z_1}:\Theta\rightarrow \underline{\mbox{mod}}\,\mathcal{C}$  detects isomorphisms. Since a full functor whose kernel is an ideal such that square vanishes detects isomorphisms \cite[Lemma 8.6]{[Am]}, the ``only if" part follows from Lemma \ref{lem1}(c).

(c) It is a direct consequence of (b).
\end{proof}


\begin{rem}
(a)  Let $\Theta_1\subseteq\Theta_2$ be full subcategories of $C^{\tiny\mbox{ex}}_{n\mbox{-}\Sigma}(\iota(\mathcal{C}))$ which are closed under $n$-$\Sigma$-homotopy equivalence. If the functors $Z_1: \Theta_1\rightarrow \mbox{mod}\,\mathcal{C}$ and $Z_1: \Theta_2\rightarrow \mbox{mod}\,\mathcal{C}$ are full and dense, then $\Theta_1=\Theta_2$. 

 (b) Assume that $X_\bullet\in C^{\tiny\mbox{ex}}_{n\mbox{-}\Sigma}(\iota(\mathcal{C}))$ and $M\in\mbox{mod}\,\mathcal{C}$. If $Z_1X_\bullet\cong M$, then $X_\bullet$ is an $n$-$\Sigma$-periodic injective resolution of $M$. Moreover, $X_\bullet$ is an $n$-$\Sigma$-periodic injective resolution of $M$ if and only if $X_\bullet[-1]$ is an $n$-$\Sigma$-periodic projective resolution of $\Sigma M$.
\end{rem}

\subsection{The class of pre-$n$-angulations}

Since  $\mbox{mod}\,\mathcal{C}$ is a Frobenius category, 
for each $M\in\mbox{mod}\,\mathcal{C}$, we fix a short exact sequence $0\rightarrow M\rightarrow I_M\rightarrow\Omega^{-1}M\rightarrow0$ with $I_M\in \mbox{proj}\,\mathcal{C}$. Thus we obtain a standard injective resolution of $M$ as follows:
$$\begin{gathered}I_M\rightarrow I_{\Omega^{-1}M}\rightarrow I_{\Omega^{-2}M}\rightarrow\cdots \end{gathered}\eqno(3.1)$$
Note that the automorphism $\Sigma$ of $\mbox{mod}\,\mathcal{C}$ is an exact functor. We may assume that $I_{\Sigma M}=\Sigma I_M$ and $\Sigma\Omega^{-1}M=\Omega^{-1}\Sigma M$. In fact, the automorphism $\Sigma$ of $\mathcal{C}$ induces a triangle functor $(\Sigma,\sigma)$ on $\underline{\mbox{mod}}\,\mathcal{C}$. It is well known that $(\Omega^{-n},(-1)^n1_{\Omega^{-n-1}})$ is also a triangle functor on $\underline{\mbox{mod}}\,\mathcal{C}$.

Suppose that there exists an isomorphism $\alpha:(\Sigma,\sigma) \rightarrow(\Omega^{-n},(-1)^n1_{\Omega^{-n-1}})$ of triangle functors. Let $X_\bullet=(X_1\xrightarrow{f_1}X_2\xrightarrow{f_2}X_3\xrightarrow{f_3}\cdots\xrightarrow{f_{n-1}}X_n\xrightarrow{f_n}\Sigma X_1)$
be an exact $n$-$\Sigma$-sequence in $\mathcal{C}$  and $M=\mbox{ker}f_1$. Note that $X_\bullet$ can be seen as an $n$-$\Sigma$-periodic injective resolution of $M$ and $f_n$ has a factorization $X_n\twoheadrightarrow \Sigma M\rightarrowtail \Sigma X_1$, so by the Comparison Theorem \cite[Theorem 6.16]{[Ro]}, there exists an isomorphism $\beta_M:\Sigma M\xrightarrow{\sim}\Omega^{-n}M$ in $\underline{\mbox{mod}}\,\mathcal{C}$. We denote by $\Theta_\alpha$ the class of exact $n$-$\Sigma$-sequences 
$X_\bullet$ in $\mathcal{C}$ such that $\beta_{M}=\alpha_{M}$, where $M=Z_1X_\bullet$.

\begin{lem}\label{2.3}
(\cite[Proposition 3.4]{[GKO]}) Let $(\mathcal{C},\Sigma,\Theta)$ be a pre-$n$-angulated category. Then
the map from the class of isomorphisms of triangle functors between $(\Sigma,\sigma)$ and $(\Omega^{-n},(-1)^n1_{\Omega^{-n-1}})$ to the class of pre-$n$-angulations of $(\mathcal{C},\Sigma)$, mapping $\alpha$ to $\Theta_\alpha$, is a bijection.
\end{lem}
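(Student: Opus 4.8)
The plan is to organize everything around the invariant $\beta_M\in\underline{\mbox{mod}}\,\mathcal{C}(\Sigma M,\Omega^{-n}M)$ attached to an exact $n$-$\Sigma$-sequence with $M=Z_1X_\bullet$, which was produced just above via the Comparison Theorem applied to the truncated injective resolution $0\to M\to X_1\to\cdots\to X_n\to\Sigma M\to 0$. The whole statement then reduces to three facts about this invariant: that it respects the categorical operations appearing in (N1)--(N3), that it is a complete invariant of the $n$-$\Sigma$-homotopy class of an $n$-angle with given $Z_1$, and that it transforms under rotation exactly as dictated by the triangle-functor cocycle. First I would fix, once and for all, the standard resolution (3.1) used to compute $\Omega^{-i}$, so that $\beta$ becomes canonical and functorial.

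The first task is to show that $\Theta_\alpha=\{X_\bullet:\beta_{Z_1X_\bullet}=\alpha_{Z_1X_\bullet}\}$ is a pre-$n$-angulation for every isomorphism of triangle functors $\alpha$. For (N1)(a) I would use that the Comparison Theorem is additive and functorial, so $\beta$ is compatible with isomorphisms, finite direct sums and summands; naturality makes $\alpha$ diagonal on biproducts, so these properties match and $\Theta_\alpha$ is closed. For (N1)(b) the trivial sequence has $Z_1=0$, so $\beta_0=\alpha_0$ vacuously. The substantive point is (N1)(c): given $f_1\colon X_1\to X_2$ I would resolve $\mbox{cok}\,f_1$ injectively to produce $X_3,\dots,X_n$, and then close the resolution up periodically; the only freedom in this final gluing is a choice of isomorphism $\Sigma M\cong\Omega^{-n}M$ in $\underline{\mbox{mod}}\,\mathcal{C}$, and choosing it to be $\alpha_M$ (legitimate precisely because $\alpha_M$ is invertible) yields a member of $\Theta_\alpha$ beginning with $f_1$. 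Axiom (N3) would follow from functoriality of the Comparison Theorem: a partial morphism $(\varphi_1,\varphi_2)$ induces $\bar\varphi\colon M\to M'$ on kernels, which lifts to a chain map of the truncated resolutions, and naturality of $\alpha$ together with $\beta_M=\alpha_M$, $\beta_{M'}=\alpha_{M'}$ forces this lift to close up $n$-$\Sigma$-periodically.

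For injectivity, if $\Theta_\alpha=\Theta_{\alpha'}$ then (N1)(c) supplies for each $M$ some $X_\bullet$ in the common class with $Z_1X_\bullet=M$, whence $\alpha_M=\beta_M=\alpha'_M$, so $\alpha=\alpha'$. For surjectivity, given an arbitrary pre-$n$-angulation $\Theta$ I would set $\alpha_M:=\beta_M$ using any $X_\bullet\in\Theta$ with $Z_1X_\bullet=M$. Well-definedness rests on the fact that two $n$-angles of $\Theta$ with isomorphic $Z_1$ are $n$-$\Sigma$-homotopy equivalent: one lifts the identity on kernels by (N3) in both directions and uses that the kernel of $\underline{Z_1}$ is a square-zero ideal (Lemma \ref{lem1}(c), in the style of \cite[Lemma 8.6]{[Am]}) to see the lifts are mutually inverse up to homotopy; homotopy-equivalent resolutions give equal $\beta$. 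Naturality of $\alpha$ comes again from (N3), and the equality $\Theta=\Theta_\alpha$ then follows because $\beta$ is a complete invariant of the $n$-$\Sigma$-homotopy class, using closure of $\Theta$ under $n$-$\Sigma$-homotopy equivalence.

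I expect the main obstacle to be the rotation axiom (N2), and symmetrically the verification that the $\alpha$ constructed in the surjectivity step is genuinely an isomorphism \emph{of triangle functors}. Left rotation sends $X_\bullet$ with $Z_1=M=\ker f_1$ to a sequence with $Z_1=\ker f_2\cong\Omega^{-1}M$ (since $0\to M\to X_1\to\mbox{im}\,f_1\to 0$ exhibits $\mbox{im}\,f_1$ as $\Omega^{-1}M$), so I would have to compute how $\beta_{\Omega^{-1}M}$ of the rotated sequence relates to $\beta_M$. The comparison produces a square relating $\alpha_{\Omega^{-1}M}$, the structure isomorphism $\sigma\colon\Sigma\Omega^{-1}\to\Omega^{-1}\Sigma$, and $\Omega^{-1}\alpha_M\colon\Omega^{-1}\Sigma M\to\Omega^{-1}\Omega^{-n}M$, in which the sign $(-1)^n$ decorating the last map $(-1)^n\Sigma f_1$ of the rotated $n$-$\Sigma$-sequence reappears exactly as the sign recorded in the target triangle functor $(\Omega^{-n},(-1)^n1_{\Omega^{-n-1}})$. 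The delicate heart of the argument is the sign bookkeeping showing that this square commutes \emph{if and only if} $\alpha$ satisfies the triangle-functor compatibility, so that invariance of the condition $\beta=\alpha$ under rotation is equivalent to $\alpha$ being an isomorphism of triangle functors; this single equivalence drives both the well-definedness of the map and its surjectivity.
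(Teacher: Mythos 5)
The first thing to say is that the paper itself contains no proof of this statement: it is imported verbatim from \cite[Proposition 3.4]{[GKO]}. So your attempt can only be compared against the standard Heller-type argument that citation refers to, and against the closely related computations this paper \emph{does} carry out inside Theorem \ref{thm1} (the construction of $\beta_M$ via the Comparison Theorem, its naturality, and the sign identity $\alpha_{\Omega^{-1}M}=(-1)^{n}\Omega^{-1}\alpha_M$). Measured that way, your architecture is the right one: organizing everything around the invariant $\beta$, reducing (N3) for $\Theta_\alpha$ to naturality of $\alpha$ plus equality of invariants (the last square of a lift closes only after the discrepancy, which factors through an injective, is absorbed using that injectives are projectives), identifying (N2) with the triangle-functor sign condition, and the injectivity and surjectivity arguments are all what a correct proof must do.

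There is, however, a genuine gap at (N1)(c), which is precisely the step where the substance of the cited proof lies. You write that after resolving $\mbox{cok}\,\mathcal{C}(-,f_1)$ injectively, ``the only freedom in this final gluing is a choice of isomorphism $\Sigma M\cong\Omega^{-n}M$ in $\underline{\mbox{mod}}\,\mathcal{C}$,'' which you then ``choose to be $\alpha_M$.'' But $\alpha_M$ is an isomorphism only in the \emph{stable} category, and a stable isomorphism is not a gluing datum. To close the complex up periodically you need an honest epimorphism $\mathcal{C}(-,X_n)\twoheadrightarrow\Sigma M$ in $\mbox{mod}\,\mathcal{C}$ whose kernel is the image of the preceding map; equivalently, the $(n-2)$-nd cosyzygy of $\mbox{cok}\,\mathcal{C}(-,f_1)$, computed with terms in $\iota(\mathcal{C})$, must be isomorphic to $\Sigma M$ \emph{on the nose}, by an isomorphism lying in the stable class prescribed by $\alpha_M$. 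An arbitrary injective resolution only produces a module stably isomorphic to $\Sigma M$, and converting a stable isomorphism into an honest one requires correcting the resolution by injective direct summands --- corrections that may only be placed in the free positions $3,\dots,n$ (positions $1$, $2$ and $\Sigma X_1$ are frozen by $f_1$) and whose terms must remain in $\iota(\mathcal{C})$ rather than merely $\mbox{proj}\,\mathcal{C}$ (the paper explicitly notes these differ when $\mathcal{C}$ is not idempotent complete). Your parenthetical ``legitimate precisely because $\alpha_M$ is invertible'' misses the point: invertibility is not the issue, realizability of the stable class by an exact sequence with representable terms is, and the sketch treats this as a free choice rather than as the theorem's hardest step.

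A second, smaller gap: in the surjectivity step you conclude $\Theta_\alpha\subseteq\Theta$ ``using closure of $\Theta$ under $n$-$\Sigma$-homotopy equivalence.'' In this paper that closure property is Lemma \ref{2.2}, whose proof invokes the very lemma you are proving, so as stated your argument is circular relative to the paper's logical order. It is fixable --- one can derive closure under homotopy equivalence directly from (N1)--(N3), e.g.\ homotopy equivalent $n$-$\Sigma$-periodic complexes of injectives differ by contractible direct summands, and contractible complexes lie in $\Theta$ because they are direct summands of sums of rotations of trivial $n$-angles, hence covered by (N1)(a), (N1)(b) and (N2) --- but that derivation has to be supplied, not assumed.
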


It follows from (N2) that a pre-$n$-angulation is closed under translation functor.
The following lemma implies that a pre-$n$-angulation is closed under $n$-$\Sigma$-homotopy equivalence.

\begin{lem}\label{2.2} Suppose that  $(\mathcal{C},\Sigma,\Theta)$ is a pre-$n$-angulated category.
Let $X_\bullet$, $Y_\bullet$ be two objects in $C^{\tiny\mbox{ex}}_{n\mbox{-}\Sigma}(\iota(\mathcal{C}))$, and $\varphi_\bullet: X_\bullet\rightarrow Y_\bullet$ be an $n$-$\Sigma$-homotopy equivalence.
Then $X_\bullet\in\Theta$ if and only if $Y_\bullet\in\Theta$.
\end{lem}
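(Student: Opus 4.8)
The plan is to exploit the bijection of Lemma \ref{2.3} so as to reduce the statement to a naturality property of the comparison isomorphisms, thereby avoiding any direct-sum/contractibility analysis of the complexes. Since $(\mathcal{C},\Sigma,\Theta)$ is pre-$n$-angulated, Lemma \ref{2.3} provides a unique isomorphism $\alpha:(\Sigma,\sigma)\rightarrow(\Omega^{-n},(-1)^n1_{\Omega^{-n-1}})$ of triangle functors on $\underline{\mbox{mod}}\,\mathcal{C}$ with $\Theta=\Theta_\alpha$. Recall that an exact $n$-$\Sigma$-sequence $W_\bullet$ lies in $\Theta_\alpha$ precisely when $\beta_{W_\bullet}=\alpha_{L}$, where $L=Z_1W_\bullet$ and $\beta_{W_\bullet}:\Sigma L\xrightarrow{\sim}\Omega^{-n}L$ is the comparison isomorphism in $\underline{\mbox{mod}}\,\mathcal{C}$ obtained by viewing $W_\bullet$ as an $n$-$\Sigma$-periodic injective resolution of $L$. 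Both $X_\bullet$ and $Y_\bullet$ already belong to $C^{\tiny\mbox{ex}}_{n\mbox{-}\Sigma}(\iota(\mathcal{C}))$, so they are automatically exact, and membership in $\Theta$ is governed entirely by this equality of comparison isomorphisms. Writing $M=Z_1X_\bullet$, $N=Z_1Y_\bullet$ and $h=Z_1(\varphi_\bullet):M\rightarrow N$, the fact that $\varphi_\bullet$ is an $n$-$\Sigma$-homotopy equivalence, i.e.\ an isomorphism in $K^{\tiny\mbox{ex}}_{n\mbox{-}\Sigma}(\iota(\mathcal{C}))$, together with Lemma \ref{lem1}(a) shows that $\underline{h}=\underline{Z_1}(\varphi_\bullet)$ is an isomorphism in $\underline{\mbox{mod}}\,\mathcal{C}$.

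The heart of the argument is to prove that the square
$$\xymatrix{
\Sigma M \ar[r]^{\beta_{X_\bullet}} \ar[d]_{\Sigma\underline{h}} & \Omega^{-n}M \ar[d]^{\Omega^{-n}\underline{h}} \\
\Sigma N \ar[r]^{\beta_{Y_\bullet}} & \Omega^{-n}N
}$$
commutes in $\underline{\mbox{mod}}\,\mathcal{C}$. Here I would use that $\varphi_\bullet$ is a chain map between the injective resolutions $X_\bullet$ and $Y_\bullet$ lifting $h$. Because these complexes are $n$-$\Sigma$-periodic and $\varphi_{n+1}=\Sigma\varphi_1$, the map induced by $\varphi_\bullet$ on the $n$-th cosyzygies $\mbox{ker}f_{n+1}=\Sigma M$ and $\mbox{ker}g_{n+1}=\Sigma N$ is exactly $\Sigma h$; on the other hand, comparing $X_\bullet$ and $Y_\bullet$ with the standard injective resolutions $(3.1)$ of $M$ and $N$, any lift of $h$ to those standard resolutions induces $\Omega^{-n}h$ after $n$ steps. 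Invoking the functoriality of the Comparison Theorem, namely the uniqueness up to homotopy of such lifts, forces the two resulting identifications to agree in the stable category, which is exactly the commutativity of the square. I expect this naturality of $\beta$ with respect to $\varphi_\bullet$ to be the main obstacle, since it requires carefully tracking the comparison maps from $X_\bullet$ and $Y_\bullet$ to the standard resolutions and the chosen lift of $h$, and then applying the homotopy-uniqueness of lifts in $\underline{\mbox{mod}}\,\mathcal{C}$.

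Granting the square, the conclusion follows formally. Naturality of the transformation $\alpha$ gives $\alpha_N\circ\Sigma\underline{h}=\Omega^{-n}\underline{h}\circ\alpha_M$. If $X_\bullet\in\Theta=\Theta_\alpha$, that is $\beta_{X_\bullet}=\alpha_M$, then the commuting square yields $\beta_{Y_\bullet}\circ\Sigma\underline{h}=\Omega^{-n}\underline{h}\circ\beta_{X_\bullet}=\Omega^{-n}\underline{h}\circ\alpha_M=\alpha_N\circ\Sigma\underline{h}$; since $\Sigma\underline{h}$ is an isomorphism it cancels, giving $\beta_{Y_\bullet}=\alpha_N$, so $Y_\bullet\in\Theta$. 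The converse is symmetric: assuming $\beta_{Y_\bullet}=\alpha_N$, the same chain of identities reads $\Omega^{-n}\underline{h}\circ\beta_{X_\bullet}=\beta_{Y_\bullet}\circ\Sigma\underline{h}=\alpha_N\circ\Sigma\underline{h}=\Omega^{-n}\underline{h}\circ\alpha_M$, and cancelling the isomorphism $\Omega^{-n}\underline{h}$ recovers $\beta_{X_\bullet}=\alpha_M$, whence $X_\bullet\in\Theta$. This establishes the desired equivalence $X_\bullet\in\Theta\Leftrightarrow Y_\bullet\in\Theta$.
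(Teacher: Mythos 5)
Your proposal is correct and follows essentially the same route as the paper's own proof: reduce via Lemma \ref{2.3} to $\Theta=\Theta_\alpha$, establish the commuting square $\Omega^{-n}\underline{h}\cdot\beta_{X_\bullet}=\beta_{Y_\bullet}\cdot\Sigma\underline{h}$ from the Comparison Theorem, combine it with the naturality square for $\alpha$, and cancel the isomorphisms $\Sigma\underline{h}$ and $\Omega^{-n}\underline{h}$. The only difference is cosmetic: you spell out the homotopy-uniqueness argument behind the comparison square, which the paper compresses into a single citation of the Comparison Theorem.
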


\begin{proof} By Lemma \ref{2.3}, we can assume that $\Theta=\Theta_\alpha$ for some isomorphism $\alpha: (\Sigma,\sigma) \rightarrow (\Omega^{-n},(-1)^n1_{\Omega^{-n-1}})$ of triangle functors.
Assume that $M=Z_1X_\bullet$ and $N=Z_1Y_\bullet$, then $\varphi_\bullet$ induces a morphism $h=Z_1(\varphi_\bullet):M\rightarrow N$. We note that $h$ is an isomorphism in $\underline{\mbox{mod}}\,\mathcal{C}$ since $\varphi_\bullet$ is an isomorphism in $K^{\tiny\mbox{ex}}_{n\mbox{-}\Sigma}(\iota(\mathcal{C}))$. By the Comparison Theorem we have $\Omega^{-n}h\cdot\beta_M=\beta_N\cdot\Sigma h$ in $\underline{\mbox{mod}}\,\mathcal{C}$. Note that we also have $\Omega^{-n}h\cdot\alpha_M=\alpha_N\cdot\Sigma h$ by the naturality of $\alpha$.
 Thus $$\begin{array}{ll}
           & X_\bullet\in\Theta\Leftrightarrow \beta_M=\alpha_M\\
          \Leftrightarrow & (\Omega^{-n}h)^{-1}\cdot\beta_N\cdot\Sigma h=(\Omega^{-n}h)^{-1}\cdot\alpha_N\cdot\Sigma h \\
         \Leftrightarrow  &  \beta_N=\alpha_N\Leftrightarrow Y_\bullet\in\Theta.
        \end{array}
 $$
\end{proof}

\begin{cor} (see \cite[Lemma 2.2]{[BJT]})
Let $(\mathcal{C},\Sigma,\Theta)$ be a pre-$n$-angulated category. Then $\Theta$ must include all contractible $n$-$\Sigma$-sequences.
\end{cor}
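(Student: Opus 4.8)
The plan is to reduce the whole statement to Lemma~\ref{2.2}, which already guarantees that a pre-$n$-angulation $\Theta$ is closed under $n$-$\Sigma$-homotopy equivalence. First I would record that the zero object of $C^{\tiny\mbox{ex}}_{n\mbox{-}\Sigma}(\iota(\mathcal{C}))$ lies in $\Theta$: applying axiom (N1)(b) to the zero object $X=0$ produces the trivial sequence $0\xrightarrow{1}0\rightarrow\cdots\rightarrow0\rightarrow0$, which is exactly the zero $n$-$\Sigma$-sequence. So membership of the zero complex in $\Theta$ is immediate from the axioms.

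The second step is to unwind the meaning of \emph{contractible}. As fixed in the terminology earlier in this section, an $n$-$\Sigma$-contractible complex $A_\bullet$ is one whose identity morphism $\mathrm{id}_{A_\bullet}$ is $n$-$\Sigma$-homotopic to the zero morphism. I would then observe that the only morphisms between $A_\bullet$ and the zero complex are the zero maps, and that the relations $\mathrm{id}_{A_\bullet}\simeq 0$ together with $\mathrm{id}_{0}=0$ exhibit the zero maps $A_\bullet\rightarrow 0$ and $0\rightarrow A_\bullet$ as mutually inverse in $K^{\tiny\mbox{ex}}_{n\mbox{-}\Sigma}(\iota(\mathcal{C}))$. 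In other words, $A_\bullet$ is $n$-$\Sigma$-homotopy equivalent to the zero sequence. The one point that must be checked here is that the null-homotopy of the identity respects the periodicity constraint $h_{n+i}=\Sigma h_i$, but this is already built into the very definition of an $n$-$\Sigma$-homotopy, so no extra verification is required.

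Combining the two observations finishes the proof: by the second step $A_\bullet$ is $n$-$\Sigma$-homotopy equivalent to the zero sequence, which by the first step belongs to $\Theta$, so Lemma~\ref{2.2} forces $A_\bullet\in\Theta$. I do not expect any genuine obstacle in this argument; its only delicate aspect is definitional, namely reading ``contractible'' as ``$n$-$\Sigma$-homotopy equivalent to zero'' in the $\Sigma$-periodic sense so that Lemma~\ref{2.2} applies verbatim. As a more hands-on alternative one could avoid Lemma~\ref{2.2} altogether by decomposing a contractible periodic complex, up to isomorphism, into a direct sum of $\Sigma$-orbits of elementary contractible pieces $X\xrightarrow{1}X$, each of which is a rotation of a trivial sequence, and then invoking (N1)(b), (N2), and closure under direct sums and isomorphisms in (N1)(a); but that route demands justifying the decomposition of a periodic contractible complex, which is more laborious than the homotopy-equivalence argument above.
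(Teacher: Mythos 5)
Your proof is correct and is essentially the paper's own argument: the paper likewise observes that contractible $n$-$\Sigma$-sequences and trivial ones are zero objects in $K^{\tiny\mbox{ex}}_{n\mbox{-}\Sigma}(\iota(\mathcal{C}))$, hence $n$-$\Sigma$-homotopy equivalent, and then combines (N1)(b) with Lemma~\ref{2.2}. The only cosmetic difference is that you route the equivalence through the zero complex rather than through a general trivial sequence, which changes nothing of substance.
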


\begin{proof}
We note that trivial $n$-$\Sigma$-sequences and contractible $n$-$\Sigma$-sequences are $n$-$\Sigma$-homotopy equivalent, because they are zero objects in $K^{\tiny\mbox{ex}}_{n\mbox{-}\Sigma}(\iota(\mathcal{C}))$. Since each trivial $n$-$\Sigma$-sequence belongs to $\Theta$, by Lemma \ref{2.2} we infer that each contractible $n$-$\Sigma$-sequence belongs to $\Theta$ too.
\end{proof}



Now we are ready to prove the following result.

\begin{thm}\label{thm1} Let $\Theta$ be a full subcategory of $C^{\tiny\mbox{ex}}_{n\mbox{-}\Sigma}(\iota(\mathcal{C}))$ which is closed under translation functor and $n$-$\Sigma$-homotopy equivalence.  Then the following statements are equivalent.

(a) The class $\Theta$ is a pre-$n$-angulation of $(\mathcal{C},\Sigma)$.

(b) The functor $Z_1: \Theta\rightarrow \mbox{mod}\,\mathcal{C}$ is full and dense.

(c) There exists an isomorphism $\alpha:(\Sigma,\sigma) \rightarrow(\Omega^{-n},(-1)^n1_{\Omega^{-n-1}})$ of triangle functors such that $\Theta=\Theta_\alpha$.
\end{thm}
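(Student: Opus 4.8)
The plan is to read the equivalence (a) $\Leftrightarrow$ (c) directly off Lemma~\ref{2.3} and to spend the real effort closing the loop through (b), by proving (a) $\Rightarrow$ (b) and (b) $\Rightarrow$ (c). Since $\mbox{mod}\,\mathcal{C}$ is Frobenius throughout this section, the triangle functor $\underline{Z_1}$ of Lemma~\ref{lem1}, the comparison isomorphisms $\beta_M$, and the classes $\Theta_\alpha$ are all at our disposal. For (a) $\Leftrightarrow$ (c): if $\Theta$ is a pre-$n$-angulation then Lemma~\ref{2.3} exhibits a unique triangle-functor isomorphism $\alpha$ with $\Theta=\Theta_\alpha$, and conversely each $\Theta_\alpha$ is a pre-$n$-angulation; the hypotheses that $\Theta$ be closed under the translation functor and under $n$-$\Sigma$-homotopy equivalence are exactly what (N2) and Lemma~\ref{2.2} guarantee on the pre-$n$-angulation side, so nothing is lost in passing between the two descriptions.

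To prove (a) $\Rightarrow$ (b) I first obtain density from (N1)(c) together with exactness (Lemma~\ref{1.1}(a)). Given $M\in\mbox{mod}\,\mathcal{C}$, I write a projective presentation $\mathcal{C}(-,A)\xrightarrow{\mathcal{C}(-,a)}\mathcal{C}(-,B)\to M\to 0$, build an $n$-angle on $a$ by (N1)(c), and observe that exactness identifies $M\cong\mbox{coker}\,\mathcal{C}(-,a)$ with the kernel of the third induced map; rotating this $n$-angle twice via (N2) then produces an object of $\Theta$ whose image under $Z_1$ is isomorphic to $M$. For fullness, given $h\colon Z_1X_\bullet\to Z_1Y_\bullet$ with kernel inclusions $\iota_M\colon M\rightarrowtail X_1$ and $\iota_N\colon N\rightarrowtail Y_1$, I use injectivity of $Y_1$ to extend $\iota_N h$ to $\varphi_1\colon X_1\to Y_1$; since $g_1\varphi_1$ then vanishes on $M$ it factors through $\mbox{im}\,f_1$, and injectivity of $Y_2$ yields $\varphi_2\colon X_2\to Y_2$ with $\varphi_2 f_1=g_1\varphi_1$. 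Axiom (N3) completes $(\varphi_1,\varphi_2)$ to a morphism $\varphi_\bullet\colon X_\bullet\to Y_\bullet$, and chasing the kernels gives $Z_1\varphi_\bullet=h$.

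For (b) $\Rightarrow$ (c) I reconstruct $\alpha$ from the comparison isomorphisms. Fullness and density give, by Lemma~\ref{lem2}(c), a unique $T_M\in\Theta$ up to $n$-$\Sigma$-homotopy equivalence with $Z_1T_M\cong M$, and I set $\alpha_M:=\beta_{T_M}\colon\Sigma M\xrightarrow{\sim}\Omega^{-n}M$. This is well defined because the comparison isomorphism is invariant under $n$-$\Sigma$-homotopy equivalence (the computation in the proof of Lemma~\ref{2.2}), and it is natural because any $h\colon M\to N$ lifts, by fullness, to a morphism in $\Theta$ to which the relation $\Omega^{-n}h\cdot\beta_M=\beta_N\cdot\Sigma h$ of the comparison theorem applies. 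By construction every $X_\bullet\in\Theta$ satisfies $\beta_{X_\bullet}=\alpha_{Z_1X_\bullet}$, so $\Theta\subseteq\Theta_\alpha$. Since $\Theta_\alpha$ is a pre-$n$-angulation, the already-proved implication (a) $\Rightarrow$ (b) shows $Z_1$ is full and dense on $\Theta_\alpha$ (which is closed under translation and homotopy equivalence by (N2) and Lemma~\ref{2.2}); as both $\Theta$ and $\Theta_\alpha$ are closed under $n$-$\Sigma$-homotopy equivalence, the remark following Lemma~\ref{lem2} forces $\Theta=\Theta_\alpha$, which is (c). Together with (a) $\Leftrightarrow$ (c) this closes the loop.

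The step I expect to be the crux is verifying that the natural isomorphism $\alpha=(\beta_{T_M})_M$ is genuinely an isomorphism of \emph{triangle} functors $(\Sigma,\sigma)\to(\Omega^{-n},(-1)^n1_{\Omega^{-n-1}})$, and not merely a natural isomorphism of the underlying functors: one must check its compatibility with the two structural isomorphisms, i.e. that $\alpha$ intertwines $\sigma$ with $(-1)^n1$ under the suspension $\Omega^{-1}$ of $\underline{\mbox{mod}}\,\mathcal{C}$. This is precisely the coherence underlying Lemma~\ref{2.3} (cf. \cite[Proposition 3.4]{[GKO]}), and it is needed both to make the symbol $\Theta_\alpha$ meaningful and to invoke (a) $\Rightarrow$ (b) for $\Theta_\alpha$; everything else in the argument is a routine diagram chase using injectivity of the objects of $\iota(\mathcal{C})$ and the homotopy calculus of Lemma~\ref{lem1}.
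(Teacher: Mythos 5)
Your overall route is the same as the paper's: (a) $\Rightarrow$ (b) via (N1)(c), (N2), injectivity of the $Y_i$ and (N3); (c) $\Rightarrow$ (a) via Lemma~\ref{2.3}; and (b) $\Rightarrow$ (c) by assembling the comparison isomorphisms $\beta$ into a natural isomorphism $\alpha:\Sigma\rightarrow\Omega^{-n}$ and then showing $\Theta=\Theta_\alpha$. However, the step you explicitly flag as the crux and then defer --- that $\alpha$ is an isomorphism of \emph{triangle} functors, i.e. $\alpha_{\Omega^{-1}M}=(-1)^n\Omega^{-1}\alpha_M$ in $\underline{\mbox{mod}}\,\mathcal{C}$ --- is a genuine gap, not a citation. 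Lemma~\ref{2.3} takes an isomorphism of triangle functors as its \emph{input}: only for such an $\alpha$ does it assert that $\Theta_\alpha$ is a pre-$n$-angulation. It says nothing about whether a natural isomorphism manufactured from an arbitrary full-and-dense $\Theta$ respects the triangle structure, so ``the coherence underlying Lemma~\ref{2.3}'' cannot supply this verification. The paper proves it by hand, and this is exactly the point where the hypothesis that $\Theta$ is closed under the translation functor enters: one takes $T_M[1]\in\Theta$, identifies $Z_1(T_M[1])\cong\Omega^{-1}M$ by Lemma~\ref{lem2}(b), and compares the shifted complex (whose relevant differential carries the sign $(-1)^n$ coming from the rotation convention) with the standard resolution, obtaining $\alpha_{\Omega^{-1}M}=(-1)^n\Omega^{-1}\alpha_M$.

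The gap is not cosmetic, because your argument for (b) $\Rightarrow$ (c) never invokes closure under translation, and that implication is false without it. Take the local ring situation of Section 4.3 with $n$ odd and $2p\neq 0$: the class $\Theta_u$ is a full subcategory of $C^{\tiny\mbox{ex}}_{n\mbox{-}\Sigma}(\iota(\mathcal{C}))$ closed under $n$-$\Sigma$-homotopy equivalence (Lemma~\ref{lr1}), and $Z_1:\Theta_u\rightarrow\mbox{mod}\,R$ is dense and, as one checks directly on $\mbox{add}\,(R\oplus\mathfrak{m})$, also full; yet $\Theta_u$ is not closed under translation, since $R(u)_\bullet[1]\cong R(-u)_\bullet\notin\Theta_u$ (Remark~\ref{lr2}). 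If (b) $\Rightarrow$ (c) were provable without the translation hypothesis, $\Theta_u$ would equal some $\Theta_\alpha$, hence would be a pre-$n$-angulation by Lemma~\ref{2.3} and in particular closed under translation by (N2) --- a contradiction. So any correct proof must use translation-closure somewhere, and the paper uses it precisely in the compatibility check you omitted. The remainder of your argument (density and fullness in (a) $\Rightarrow$ (b), well-definedness and naturality of $\alpha$ via the Comparison Theorem and Lemma~\ref{lem2}, and the identification $\Theta=\Theta_\alpha$ via the remark following Lemma~\ref{lem2}) does match the paper and is sound.
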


\begin{proof}
$(a)\Rightarrow (b).$
For each $M\in\mbox{mod}\,\mathcal{C}$, we fix a projective presentation
$$\mathcal{C}(-,X_1)\xrightarrow{\mathcal{C}(-,f_1)}\mathcal{C}(-,X_2)\rightarrow M\rightarrow 0.$$
By (N1)(c), we get an $n$-angle $$X_\bullet=(X_1\xrightarrow{f_1}X_2\xrightarrow{f_2}\cdots\xrightarrow{f_{n-1}}X_n\xrightarrow{f_n}\Sigma X_1)\in\Theta.$$
It follows from (N2) that $T_M=X_\bullet[2]\in \Theta$ and $Z_1T_M=M$.

For each morphism $h:M\rightarrow N$ in $\mbox{mod}\,\mathcal{C}$, let $T_M=(X_i,f_i)_{i\in\mathbb{Z}}\in\Theta$ and $T_N=(Y_i,g_i)_{i\in\mathbb{Z}}\in\Theta$ such that $Z_1T_M=M$ and $Z_1T_N=N$. Since $Y_1$ and $Y_2$ are projective-injectives, we obtain the following commutative diagram
$$\xymatrix{
0\ar[r] & M\ar[r]\ar[d]^{h}& X_1 \ar[r]^{f_1}\ar[d]^{\varphi_1} & X_2 \ar[r]^{f_2}\ar[d]^{\varphi_2} & X_3 \ar[r]^{f_3} & \cdots \ar[r]^{f_{n-1}}& X_n \ar[r]^{f_n} & \Sigma X_1 \ar[d]^{\Sigma \varphi_1}\\
0\ar[r] & N\ar[r] & Y_1 \ar[r]^{g_1} & Y_2 \ar[r]^{g_2} & Y_3 \ar[r]^{g_3} & \cdots \ar[r]^{g_{n-1}} & Y_n \ar[r]^{g_n}& \Sigma Y_1\\
}$$ whose rows are exact sequences. By (N3) there exist morphisms $\varphi_i:X_i\rightarrow Y_i$ for $3\leq i\leq n$, such that the above diagram is commutative. We denote by $T(h)=(\varphi_1,\varphi_2,\cdots,\varphi_n)$. Then $Z_1T(h)=h$.

(b) $\Rightarrow$ (c). Given an object $M\in\mbox{mod}\,\mathcal{C}$, assume that $T_M=(X_i,f_i)_{i\in\mathbb{Z}}\in\Theta$ such that $Z_1T_M\cong M$.
 By (3.1), we obtain the following commutative diagram
$$\xymatrix{
0 \ar[r] & M \ar[r]\ar@{=}[d] & X_1 \ar[r]\ar[d] & X_2 \ar[r]\ar[d] & \cdots \ar[r] & X_{n} \ar[r]\ar[d] & \Sigma M \ar[d]^{\alpha_M} \ar[r]& 0\\
0 \ar[r] & M \ar[r] & I_M \ar[r] & I_{\Omega^{-1}M} \ar[r] & \cdots \ar[r] & I_{\Omega^{1-n}M} \ar[r] & \Omega^{-n}M \ar[r] & 0\\
}$$
with exact rows. The  Comparison Theorem implies that $\alpha_M:\Sigma M\rightarrow\Omega^{-n}M$ is an isomorphism in $\underline{\mbox{mod}}\mathcal{C}$. For each morphism $h:M\rightarrow N$ in $\mbox{mod}\,\mathcal{C}$, we deduce that $\Omega^{-n}h\cdot\alpha_M=\alpha_{N}\cdot\Sigma h$ in $\underline{\mbox{mod}}\,\mathcal{C}$ by the Comparison Theorem. Thus we obtain a functorial isomorphism $\alpha:\Sigma\rightarrow\Omega^{-n}$.
For each $M\in\mbox{mod}\,\mathcal{C}$, we may assume that $X_1=I_M$ and $0\rightarrow M\xrightarrow{a_1} X_1\xrightarrow{b_1} \Omega^{-1}M\rightarrow 0$ is an exact sequence. By Lemma \ref{lem2}(b), we have $Z_1(T_M[1])\cong\Omega^{-1}M$. By the construction of the functor $\alpha$, we obtain the following commutative diagram
$$\xymatrix{
0\ar[r] & \Sigma M \ar[r]^{\Sigma a_1}\ar[d]^{\alpha_M} & \Sigma X_1 \ar[r]^{(-1)^{n}\Sigma b_1\ \ }\ar[d] & \Sigma\Omega^{-1}M \ar[d]^{\alpha_{\Omega^{-1} M}}\ar[r]& 0 \\
0\ar[r] & \Omega^{-n} M \ar[r] & I_{\Omega^{-n} M}  \ar[r] & \Omega^{-n-1} M \ar[r] & 0\\
}$$
with exact rows. Since $\Sigma X_1=I_{\Sigma M}$,  we have the following commutative diagram
$$\xymatrix{
0\ar[r] &\Sigma M \ar[r]^{\Sigma a_1}\ar[d]^{\alpha_M} & \Sigma X_1 \ar[r]^{\Sigma b_1}\ar[d] & \Omega^{-1}\Sigma M \ar[d]^{\Omega^{-1}\alpha_{ M}}\ar[r] & 0\\
0\ar[r] &\Omega^{-n} M \ar[r] & I_{\Omega^{-n} M}  \ar[r] & \Omega^{-n-1} M\ar[r] & 0\\
}$$with exact rows. Therefore, $\alpha_{\Omega^{-1}M}=(-1)^{n}\Omega^{-1}\alpha_M$ in $\underline{\mbox{mod}}\,\mathcal{C}$. In other words, $\alpha: (\Sigma,\sigma) \rightarrow (\Omega^{-n},(-1)^n1_{\Omega^{-n-1}})$ is an isomorphism of triangle functors. Now $(\mathcal{C},\Sigma)$ admits a pre-$n$-angulation $\Theta_\alpha$ by Lemma \ref{2.3}.

It remains to show that $\Theta=\Theta_\alpha$. By the construction of $\alpha$,
we have $\Theta\subseteq \Theta_\alpha$. Let $X_\bullet=(X_i, f_i)_{i\in\mathbb{Z}}$ be an object in $\Theta_\alpha$ and $M$ be the kernel of $f_1$. Suppose $T_M=(Y_i,g_i)_{i\in\mathbb{Z}}\in\Theta$ such that $Z_1T_M=M$. Since $Y_1$ and $Y_2$ are projective-injective, we  find morphisms $\varphi_1:X_1\rightarrow Y_1$ and $\varphi_2:X_2\rightarrow Y_2$ such that the following diagram
$$\xymatrix{
& X_1 \ar[r]^{f_1}\ar[dd]^{\varphi_1} & X_2 \ar[r]^{f_2}\ar[dd]^{\varphi_2} & X_3 \ar[r]^{f_3} & \cdots \ar[r]^{f_{n-1}}& X_n \ar[r]^{f_n} & \Sigma X_1 \ar[dd]^{\Sigma \varphi_1}\\
 M \ar[ru]\ar@{=}[dd] & & & & & & & \\
 &  Y_1 \ar[r]^{} & Y_2 \ar[r]^{} & Y_3 \ar[r]^{} & \cdots \ar[r]^{} & Y_n \ar[r]^{}& \Sigma Y_1\\
 M\ar[ru] & & & & & & & & &\\
}$$ commutes. We complete $(\varphi_1,\varphi_2)$ to an $n$-$\Sigma$-periodic morphism $\varphi_\bullet=(\varphi_1,\varphi_2, \cdots,\varphi_n)$ from $X_\bullet$ to $T_M$ by (N3). Since $Z_1(\varphi_\bullet)=1$, we obtain that $\varphi_\bullet: X_\bullet\rightarrow T_M$ is an $n$-$\Sigma$-homotopy equivalence by Lemma \ref{lem2}(a). Thus $X_\bullet\in\Theta$ because $\Theta$ is closed under $n$-$\Sigma$-homotopy equivalence. Therefore, $\Theta=\Theta_\alpha$.

(c) $\Rightarrow$ (a). It follows from  Lemma \ref{2.3}.
\end{proof}



\subsection{$n$-angulations} Keep the notation as above. The functor $Z_1: \Theta\rightarrow \mbox{mod}\,\mathcal{C}$ is called {\em``strongly" full} if for each morphism $h: Z_1X_\bullet\rightarrow Z_1Y_\bullet$ in $\mbox{mod}\,\mathcal{C}$, where $X_\bullet,Y_\bullet\in\Theta$, there exists a morphism $T(h): X_\bullet\rightarrow Y_\bullet$ in $\Theta$ such that $Z_1(T(h))=h$ and the mapping cone $C(T(h))$ belongs to $\Theta$.

 The main theorem of this paper is the following.

\begin{thm}\label{thm2} Let $\Theta$ be a full subcategory of $C^{\tiny\mbox{ex}}_{n\mbox{-}\Sigma}(\iota(\mathcal{C}))$ which is closed under translation functor and $n$-$\Sigma$-homotopy equivalence.  Then the following statements are equivalent.

(a) The class $\Theta$ is an $n$-angulation of $(\mathcal{C},\Sigma)$.

(b) The functor $Z_1: \Theta\rightarrow \mbox{mod}\,\mathcal{C}$ is dense and $\Theta$ satisfies axiom (N4).

(c) The functor $Z_1: \Theta\rightarrow \mbox{mod}\,\mathcal{C}$ is dense and ``strongly" full.
\end{thm}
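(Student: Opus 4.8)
The plan is to deduce everything from Theorem \ref{thm1}, which already equates ``$\Theta$ is a pre-$n$-angulation'' with ``$Z_1$ is full and dense'', treating axiom (N4) as the only genuinely new ingredient. Since an $n$-angulation is by definition a pre-$n$-angulation satisfying (N4), the implication (a) $\Rightarrow$ (b) is immediate: by Theorem \ref{thm1} a pre-$n$-angulation has $Z_1$ full and dense, so in particular dense, and (N4) holds by definition. The substance is therefore concentrated in recovering fullness of $Z_1$ from (N4) (to close (b) $\Rightarrow$ (a)) and, most importantly, in recovering (N4) from \emph{strongly} fullness (to close (c) $\Rightarrow$ (b)), which I expect to be the main obstacle.

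For the routine implications I would exploit that, since $\mbox{mod}\,\mathcal{C}$ is Frobenius, every term $Y_i\in\iota(\mathcal{C})$ is injective. Given $X_\bullet,Y_\bullet\in\Theta$ and a morphism $h\colon Z_1X_\bullet\to Z_1Y_\bullet$, I would extend $\ker f_1\hookrightarrow X_1$ along $h$ into $Y_1$ using injectivity of $Y_1$, producing $\varphi_1\colon X_1\to Y_1$, and then extend once more using injectivity of $Y_2$ to obtain $\varphi_2\colon X_2\to Y_2$ making the first square commute; this places us exactly ``in the situation of (N3)''. For (b) $\Rightarrow$ (a), applying (N4) to this square completes $(\varphi_1,\varphi_2)$ to a morphism $\varphi_\bullet\colon X_\bullet\to Y_\bullet$ in $\Theta$ with $Z_1(\varphi_\bullet)=h$; hence $Z_1$ is full, and Theorem \ref{thm1} together with density yields a pre-$n$-angulation, which is then an $n$-angulation because (N4) holds by hypothesis. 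For (b) $\Rightarrow$ (c), the very same completion, now reading off the mapping-cone clause of (N4), produces $T(h)=\varphi_\bullet$ with $C(T(h))\in\Theta$, which is precisely \emph{strongly} fullness.

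The heart of the proof is (c) $\Rightarrow$ (b). Since \emph{strongly} full trivially implies full, Theorem \ref{thm1} and density already give that $\Theta$ is a pre-$n$-angulation, so (N1)--(N3) hold and it remains to verify (N4). Suppose we are given a commutative square with prescribed $\varphi_1,\varphi_2$ and rows $X_\bullet,Y_\bullet\in\Theta$; put $M=\ker f_1$, $N=\ker g_1$, and let $h\colon M\to N$ be the morphism induced by $\varphi_1$. By \emph{strongly} fullness there is $\psi_\bullet\colon X_\bullet\to Y_\bullet$ in $\Theta$ with $Z_1(\psi_\bullet)=h$ and $C(\psi_\bullet)\in\Theta$. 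The difficulty is that $\psi_1,\psi_2$ need not agree with the prescribed $\varphi_1,\varphi_2$, so $\psi_\bullet$ is not yet a completion of the given square. The key move is to correct $\psi_\bullet$ by an $n$-$\Sigma$-homotopy rather than to start afresh: writing $\delta_1=\varphi_1-\psi_1$ and $\delta_2=\varphi_2-\psi_2$, I note that $\delta_1$ kills $M=\ker f_1$ (both $\varphi_1$ and $\psi_1$ restrict to $h$) and that $g_1\delta_1=\delta_2 f_1$ (both squares commute). Using injectivity of $Y_1$ I would factor $\delta_1=h_1f_1$, and then, using injectivity of $Y_2$ together with the identity $(\delta_2-g_1h_1)f_1=\delta_2f_1-g_1\delta_1=0$, factor $\delta_2-g_1h_1=h_2f_2$. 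Setting all remaining homotopy components equal to zero, the collection $(h_1,h_2,0,\dots,0)$ is an $n$-$\Sigma$-homotopy carrying $\psi_\bullet$ to a morphism $\psi'_\bullet$ with $\psi'_1=\varphi_1$ and $\psi'_2=\varphi_2$; that is, $\psi'_\bullet$ is a genuine completion of the prescribed square.

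Finally, because $\psi'_\bullet$ and $\psi_\bullet$ are $n$-$\Sigma$-homotopic, their mapping cones are isomorphic in the triangulated homotopy category $K^{\tiny\mbox{ex}}_{n\mbox{-}\Sigma}(\iota(\mathcal{C}))$, hence $n$-$\Sigma$-homotopy equivalent; since $\Theta$ is closed under $n$-$\Sigma$-homotopy equivalence and $C(\psi_\bullet)\in\Theta$, it follows that $C(\psi'_\bullet)\in\Theta$. Thus $\psi'_\bullet$ is a completion of $(\varphi_1,\varphi_2)$ whose mapping cone lies in $\Theta$, verifying (N4). The main obstacle, as indicated, is exactly this matching step: \emph{strongly} fullness only furnishes a cone-controlled lift of $h$, and one must transfer that control to an arbitrary prescribed lift of the first two terms. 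What makes the transfer possible is the homotopy-invariance of the mapping cone combined with the closure of $\Theta$ under homotopy equivalence, while the solvability of the factorizations $\delta_1=h_1f_1$ and $\delta_2-g_1h_1=h_2f_2$ rests on the injectivity guaranteed by the standing Frobenius hypothesis.
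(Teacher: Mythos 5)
Your proposal is correct and follows essentially the same route as the paper: (a)$\Leftrightarrow$(b) and (b)$\Rightarrow$(c) are reduced to Theorem \ref{thm1} after observing that (N4) yields fullness, and the key implication (c)$\Rightarrow$(b) is handled exactly as in the paper, by taking the cone-controlled lift $T(h)$ of the induced kernel morphism and correcting it via the homotopy $(h_1,h_2,0,\dots,0)$ obtained from the factorizations $\varphi_1-\phi_1=h_1f_1$ and $\varphi_2-\phi_2-g_1h_1=h_2f_2$, then invoking homotopy-invariance of mapping cones together with closure of $\Theta$ under $n$-$\Sigma$-homotopy equivalence.
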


\begin{proof}
We note that if $\Theta$ satisfies axiom (N4), then it is easy to see that the functor $Z_1: \Theta\rightarrow \mbox{mod}\,\mathcal{C}$ is full. Thus
(a) $\Leftrightarrow$ (b) and (b) $\Rightarrow$ (c) follows from  Theorem \ref{thm1}.

(c) $\Rightarrow$ (b). Given a commutative diagram
$$\xymatrix{
X_1 \ar[r]^{f_1}\ar[d]^{\varphi_1} & X_2 \ar[r]^{f_2}\ar[d]^{\varphi_2} & X_3 \ar[r]^{f_3} & \cdots \ar[r]^{f_{n-1}}& X_n \ar[r]^{f_n} & \Sigma X_1 \ar[d]^{\Sigma \varphi_1}\\
Y_1 \ar[r]^{g_1} & Y_2 \ar[r]^{g_2} & Y_3 \ar[r]^{g_3} & \cdots \ar[r]^{g_{n-1}} & Y_n \ar[r]^{g_n}& \Sigma Y_1\\
}$$ with rows in $\Theta$. Assume that $k_1: M\rightarrow X_1$ is the kernel of $f_1$, $k_1':N\rightarrow Y_1$ is the kernel of $g_1$ and $h:M\rightarrow N$ is the induced morphism. By assumption, there is an object $T(h)=(\phi_1,\phi_2,\cdots,\phi_n)\in\Theta$ such that $Z_1T(h)=h$ and the mapping cone $C(T(h))\in\Theta$. Since $(\varphi_1-\phi_1)k_1=0$, there exists a morphism $h_1:X_2\rightarrow Y_1$ such that $\varphi_1-\phi_1=h_1f_1$. Note that $(\varphi_2-\phi_2-g_1h_1)f_1=0$. There exists a morphism $h_2:X_3\rightarrow Y_2$ such that $\varphi_2-\phi_2=g_1h_1+h_2f_2$. We define $\varphi_3=\phi_3+g_2h_2$, then $g_3\varphi_3=g_3\phi_3=\phi_4f_3$. If we take $\varphi_4=\phi_4,\cdots,\varphi_n=\phi_n$, then $g_i\varphi_i=\varphi_{i+1}f_i$ for $4\leq i\leq n-1$, and $g_n\varphi_{n}=g_n\phi_n=\Sigma\phi_1\cdot f_n=\Sigma(\varphi_1-h_1f_1)\cdot f_n=\Sigma\varphi_1\cdot f_n$. Thus $\varphi_\bullet=(\varphi_1,\varphi_2,\varphi_3,\cdots,\varphi_n)$ is an $n$-$\Sigma$-periodic morphism and $\varphi_\bullet$ is $n$-$\Sigma$-homotopic to $T(h)$ with the $n$-$\Sigma$-homotopy $(h_1,h_2,0,\cdots,0)$. Hence the mapping cone $C(\varphi_\bullet)$ is isomorphic to the mapping cone $C(T(h))$ in $K^{\tiny\mbox{ex}}_{n\mbox{-}\Sigma}(\iota(\mathcal{C}))$. Consequently, $C(\varphi_\bullet)\in\Theta$.
\end{proof}

\begin{cor}\label{cor1}
If there exists a triangle functor $T: \underline{\mbox{mod}}\,\mathcal{C} \rightarrow K^{\tiny\mbox{ex}}_{n\mbox{-}\Sigma}(\iota(\mathcal{C}))$  such that $Z_1T=Id$, then $(\mathcal{C},\Sigma, \Theta)$ is an $n$-angulated category, where $\Theta=T(\underline{\mbox{mod}}\,\mathcal{C})$.
\end{cor}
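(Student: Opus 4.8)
The plan is to put $\Theta=T(\underline{\mbox{mod}}\,\mathcal{C})$, the essential image of $T$, and to verify that it satisfies the hypotheses of Theorem \ref{thm2}, after which the implication (c) $\Rightarrow$ (a) finishes the proof. Since the objects of $C^{\tiny\mbox{ex}}_{n\mbox{-}\Sigma}(\iota(\mathcal{C}))$ and $K^{\tiny\mbox{ex}}_{n\mbox{-}\Sigma}(\iota(\mathcal{C}))$ coincide, $\Theta$ is a full subcategory of $C^{\tiny\mbox{ex}}_{n\mbox{-}\Sigma}(\iota(\mathcal{C}))$. First I would record the two closure properties. As $T$ is a triangle functor, it commutes with the suspension up to isomorphism, so the essential image is closed under the translation functor. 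As the essential image of any functor is closed under isomorphism in the target, and isomorphism in $K^{\tiny\mbox{ex}}_{n\mbox{-}\Sigma}(\iota(\mathcal{C}))$ is exactly $n$-$\Sigma$-homotopy equivalence, $\Theta$ is closed under $n$-$\Sigma$-homotopy equivalence; in particular $0=T(0)\in\Theta$, so $\Theta$ contains every $n$-$\Sigma$-contractible complex.

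Next I would check density. For $M\in\mbox{mod}\,\mathcal{C}$ we have $T(M)\in\Theta$ and $Z_1T(M)\cong M$ because $\underline{Z_1}T=\mbox{Id}$; hence $Z_1:\Theta\rightarrow\mbox{mod}\,\mathcal{C}$ is dense. The same identity shows that any $X_\bullet\in\Theta$ is $n$-$\Sigma$-homotopy equivalent to $T(Z_1X_\bullet)$: writing $X_\bullet\cong T(C)$ gives $Z_1X_\bullet\cong\underline{Z_1}T(C)=C$ in $\underline{\mbox{mod}}\,\mathcal{C}$, whence $X_\bullet\cong T(Z_1X_\bullet)$ in $K^{\tiny\mbox{ex}}_{n\mbox{-}\Sigma}(\iota(\mathcal{C}))$.

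The core of the argument is to prove that $Z_1$ is ``strongly'' full. Given $h:Z_1X_\bullet\rightarrow Z_1Y_\bullet$ with $X_\bullet,Y_\bullet\in\Theta$, I would transport along the homotopy equivalences $X_\bullet\simeq T(Z_1X_\bullet)$ and $Y_\bullet\simeq T(Z_1Y_\bullet)$ and apply $T$ to the image $\underline h$ of $h$ in $\underline{\mbox{mod}}\,\mathcal{C}$, producing a morphism $\Psi$ in $K^{\tiny\mbox{ex}}_{n\mbox{-}\Sigma}(\iota(\mathcal{C}))$ with $\underline{Z_1}\Psi=\underline h$ (here I use $\underline{Z_1}T=\mbox{Id}$ together with naturality). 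Lifting $\Psi$ to an honest chain map $\Psi'$ in $C^{\tiny\mbox{ex}}_{n\mbox{-}\Sigma}(\iota(\mathcal{C}))$, the difference $h-Z_1\Psi'$ vanishes in $\underline{\mbox{mod}}\,\mathcal{C}$, i.e. it factors through a projective-injective. The key point, and the step I expect to be the main obstacle, is to absorb this difference: a direct computation (in the spirit of Lemma \ref{lem1}(b)) shows that changing $\Psi'$ by an $n$-$\Sigma$-homotopy alters $Z_1\Psi'$ precisely by morphisms that factor through the projective-injective terms of $Y_\bullet$, and using the exactness of $Y_\bullet$ together with the injectivity and projectivity of its terms one can realize the given factorization of $h-Z_1\Psi'$ as such a homotopy. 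This yields a chain map $T(h):=\Psi''$, homotopic to $\Psi'$, with $Z_1(T(h))=h$ exactly. Finally, since $\Psi''$ is homotopic to the transported copy of $T(\underline h)$ and $T$ is a triangle functor, the mapping cone satisfies $C(T(h))\cong\mbox{cone}(T\underline h)\cong T(\mbox{cone}\,\underline h)$, which lies in $\Theta$; as $\Theta$ is closed under $n$-$\Sigma$-homotopy equivalence we conclude $C(T(h))\in\Theta$. Thus $Z_1:\Theta\rightarrow\mbox{mod}\,\mathcal{C}$ is dense and ``strongly'' full, and Theorem \ref{thm2} gives that $(\mathcal{C},\Sigma,\Theta)$ is an $n$-angulated category.
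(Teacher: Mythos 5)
Your proposal is correct and follows essentially the same route as the paper's proof: both reduce to Theorem \ref{thm2}(c), obtain density directly from $Z_1T=Id$, and get the cone condition from the fact that $T$ is a triangle functor, so that the mapping cone of the lift is isomorphic in $K^{\tiny\mbox{ex}}_{n\mbox{-}\Sigma}(\iota(\mathcal{C}))$ to $T$ of the cone of $\underline{h}$, hence lies in $\Theta$. The only difference is one of detail: you explicitly verify the closure hypotheses of Theorem \ref{thm2} and carry out the homotopy correction that upgrades a lift of $\underline{h}$ in the stable category to a chain map with $Z_1(T(h))=h$ on the nose, steps the paper compresses into its assertion that $Z_1$ is ``dense and full.''
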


\begin{proof}
Since $Z_1T=Id$, the functor $Z_1:\Theta\rightarrow \mbox{mod}\,\mathcal{C}$ is dense and full.  For each morphism $h: M\rightarrow N$ in $\mbox{mod}\,\mathcal{C}$, assume that $M\xrightarrow{h} N \rightarrow L\rightarrow \Omega^{-1}M$ is a triangle in $\underline{\mbox{mod}}\,\mathcal{C}$. Then
$TM\xrightarrow{T(h)} TN \rightarrow TL\rightarrow (TM)[1]$ is a triangle in $K^{\tiny\mbox{ex}}_{n\mbox{-}\Sigma}(\iota(\mathcal{C}))$ since $T$ is a triangle functor. It follows that the mapping cone $C(T(h))\cong TL$ in $K^{\tiny\mbox{ex}}_{n\mbox{-}\Sigma}(\iota(\mathcal{C}))$. Thus $C(T(h))\in\Theta$. Now the corollary holds by Theorem \ref{thm2}.
\end{proof}

The following corollary is a higher version of \cite[Theorem 8.1]{[Am]}. It provides a unified way to construct $n$-$\Sigma$-injective resolutions.
We will give some applications in Section 5.2.

\begin{cor}\label{cor2} 
Assume there exists an exact sequence of exact endofunctors of $\mbox{mod}\,\mathcal{C}$
$$\begin{gathered}0\rightarrow \mbox{Id}\rightarrow  X^{1}\rightarrow X^{2}\rightarrow\cdots\rightarrow X^{n}\rightarrow \Sigma\rightarrow 0\end{gathered}\eqno{(3.2)} $$
where  all the $X^{i}$ take values in $\iota(\mathcal{C})$. Then $(\mathcal{C},\Sigma)$ admits an $n$-angulation.
\end{cor}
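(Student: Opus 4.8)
The plan is to deduce the statement from Corollary \ref{cor1} by manufacturing, out of the exact sequence $(3.2)$, a triangle functor $T:\underline{\mbox{mod}}\,\mathcal{C}\rightarrow K^{\tiny\mbox{ex}}_{n\mbox{-}\Sigma}(\iota(\mathcal{C}))$ with $\underline{Z_1}\,T=\mathrm{Id}$. First I would construct $T$ at the level of $\mbox{mod}\,\mathcal{C}$. Evaluating $(3.2)$ at a module $M$ gives an exact sequence $0\rightarrow M\rightarrow X^{1}M\rightarrow\cdots\rightarrow X^{n}M\rightarrow\Sigma M\rightarrow 0$ all of whose middle terms lie in $\iota(\mathcal{C})$, that is, a length-$n$ injective resolution of $M$ whose $n$-th cosyzygy is exactly $\Sigma M$. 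I would then splice these resolutions into a single $n$-$\Sigma$-periodic complex $TM$: on one fundamental domain set $X_i=X^{i}M$ and take $f_1,\dots,f_{n-1}$ to be the internal maps of the resolution, let the splicing differential $f_n: X^{n}M\rightarrow\Sigma X^{1}M$ be $\Sigma(\eta_M)\circ\varepsilon_M$, where $\eta:\mathrm{Id}\rightarrow X^{1}$ and $\varepsilon:X^{n}\rightarrow\Sigma$ are the first and last natural transformations in $(3.2)$, and extend everything by $X_{k+n}=\Sigma X_k$ and $f_{k+n}=\Sigma f_k$.

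Exactness of $TM$ at the interior spots is immediate from $(3.2)$; at the splice, the factorization $f_n: X^{n}M\twoheadrightarrow\Sigma M\rightarrowtail\Sigma X^{1}M$ gives $\ker f_n=\mathrm{im}\,f_{n-1}$, while the incoming differential $\Sigma^{-1}f_n$ at $X^{1}M$ has image $M=\ker f_1$. Hence $TM\in C^{\tiny\mbox{ex}}_{n\mbox{-}\Sigma}(\iota(\mathcal{C}))$ with $Z_1(TM)=\ker f_1=M$. Because $\eta$, $\varepsilon$ and the intermediate transformations are natural and each $X^{i}$ is a functor, a morphism $h:M\rightarrow N$ induces a chain map with $T(h)_i=X^{i}(h)$ (extended $\Sigma$-periodically), the squares commuting by naturality. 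Thus $T:\mbox{mod}\,\mathcal{C}\rightarrow C^{\tiny\mbox{ex}}_{n\mbox{-}\Sigma}(\iota(\mathcal{C}))$ is a functor with $Z_1\,T=\mathrm{Id}$.

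Finally I would promote $T$ to a triangle functor. Since every $X^{i}$ is an exact endofunctor, a conflation $0\rightarrow M\rightarrow N\rightarrow L\rightarrow 0$ in the Frobenius category $\mbox{mod}\,\mathcal{C}$ is sent to a sequence $0\rightarrow TM\rightarrow TN\rightarrow TL\rightarrow 0$ that is exact in each degree; as each degree consists of projective objects, it splits degreewise and is therefore a conflation in the Frobenius category $C^{\tiny\mbox{ex}}_{n\mbox{-}\Sigma}(\iota(\mathcal{C}))$. Moreover, if $P$ is projective-injective then all syzygies of $TP$ are injective and its defining short exact sequences split, so $TP$ is $n$-$\Sigma$-contractible, i.e.\ $T$ carries projective-injectives to projective-injectives. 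Being an exact functor of Frobenius categories that preserves projective-injectives, $T$ induces a triangle functor $\underline{T}:\underline{\mbox{mod}}\,\mathcal{C}\rightarrow K^{\tiny\mbox{ex}}_{n\mbox{-}\Sigma}(\iota(\mathcal{C}))$, the natural isomorphism $\underline{T}\,\Omega^{-1}\cong[1]\,\underline{T}$ arising from applying $T$ to the chosen conflations $0\rightarrow M\rightarrow I_M\rightarrow\Omega^{-1}M\rightarrow 0$ and using $TI_M\cong 0$. Since $Z_1\,T=\mathrm{Id}$ descends to $\underline{Z_1}\,\underline{T}=\mathrm{Id}$, Corollary \ref{cor1} applies and yields the $n$-angulation $\Theta=\underline{T}(\underline{\mbox{mod}}\,\mathcal{C})$. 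I expect the main obstacle to be exactly this last paragraph: verifying that $T$ is genuinely exact and projective-injective-preserving so that it descends to a triangle functor. The degreewise splitting of the image conflations and the contractibility of $TP$ are precisely where the hypotheses that the $X^{i}$ are exact and land in $\iota(\mathcal{C})$ get used; once these are in place, the triangulated compatibility is the standard one for stable categories of Frobenius categories.
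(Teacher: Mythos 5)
Your proposal is correct and follows essentially the same route as the paper: evaluate $(3.2)$ at $M$ to get the spliced $n$-$\Sigma$-periodic complex $T_M$ with $Z_1T_M=M$, observe that exactness of the $X^i$ makes $T$ an exact functor preserving projective-injectives, hence a triangle functor $\underline{\mbox{mod}}\,\mathcal{C}\rightarrow K^{\tiny\mbox{ex}}_{n\mbox{-}\Sigma}(\iota(\mathcal{C}))$, and conclude by Corollary \ref{cor1}. Your write-up merely supplies the details (the splice differential $\Sigma(\eta_M)\circ\varepsilon_M$, degreewise splitting of image conflations, contractibility of $TP$) that the paper compresses into ``it is easy to see.''
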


\begin{proof}
For each $M\in\mbox{mod}\,\mathcal{C}$, we denote by $T_M$ the following $n$-$\Sigma$-periodic exact complex
$$X^1M\rightarrow X^2M\rightarrow\cdots\rightarrow X^nM\rightarrow \Sigma X^1M$$
induced by the  exact sequence (3.2). Then $Z_1T_M=M$. 
Since all $X^i$ are exact functors, it is easy to see that the functor $T:\mbox{mod}\,\mathcal{C} \rightarrow C^{\tiny\mbox{ex}}_{n\mbox{-}\Sigma}(\iota(\mathcal{C}))$, which sends an object $M$ to $T_M$, is an exact functor preserving the projective-injectives. Thus $T$ induces a triangle functor $T: \underline{\mbox{mod}}\mathcal{C} \rightarrow K^{\tiny\mbox{ex}}_{n\mbox{-}\Sigma}(\iota(\mathcal{C}))$ such that $Z_1T=Id$. By Corollary \ref{cor1}, we are done.
\end{proof}

\begin{rem}\label{rem4.2}
Let $\mathcal{C}$ be an additive category and $\Sigma$ be an automorphism of $\mathcal{C}$. By Theorem \ref{thm2}, we can construct an $n$-angulation as follows. For each $M\in\mbox{mod}\,\mathcal{C}$, we fix an $n$-$\Sigma$-periodic injective resolution $T_M\in C^{\tiny\mbox{ex}}_{n\mbox{-}\Sigma}(\iota(\mathcal{C}))$.
Denote by $\Theta$ the full subcategory  of $ C^{\tiny\mbox{ex}}_{n\mbox{-}\Sigma}(\iota(\mathcal{C}))$ consisting of objects $X_\bullet$ which is $n$-$\Sigma$-homotopy equivalent to some $T_M$. If $\Theta$ is not closed under the translation functor, then $\Theta$ is not an $n$-angulation of $(\mathcal{C},\Sigma)$. Otherwise, $\Theta$ is an  $n$-angulation if $\Theta$ moreover satisfies (N4). More details and examples will be given in Section 4 and Section 5.
\end{rem}

\section{First examples}

In this section, we will apply the ideas of Section 3 to unify the $n$-angulated structure of known examples including algebraic triangulated categories, the standard construction of $n$-angulated categories and the $n$-angulated categories from local rings. We don't plan to prove that they are $n$-angulated categories. Instead, we only want to understand the $n$-angulated structure with a new point of view.

\subsection{Algebraic triangulated categories}
Let $(\mathcal{B},\mathcal{S})$ be a Frobenius category. We denote by $\underline{\mathcal{B}}$ the stable category $\mathcal{B}/[\mathcal{I}]$, where $\mathcal{I}$ is the full subcategory consisting of projective-injectives. Given a morphism $f:X\rightarrow Y$ in $\mathcal{B}$, we denote by $\underline{f}$ the image of $f$ in $\underline{\mathcal{B}}$ under the canonical functor $\mathcal{B}\rightarrow\underline{\mathcal{B}}$. By \cite[Proposition 2.2]{[Ha]}, there exists an automorphism $\Sigma:\underline{\mathcal{B}}\rightarrow\underline{\mathcal{B}}$. We recall the definition of $\Sigma$ as follows. For each $X\in \mathcal{B}$, we fix a short exact sequence $0\rightarrow X\xrightarrow{i_X} I(X)\xrightarrow{p_X} \Sigma X\rightarrow 0$ in $\mathcal{S}$ such that $I(X)\in\mathcal{I}$. For each morphism $f:X\rightarrow Y$, we get the following commutative diagram
$$\xymatrix{
0\ar[r] & X\ar[r]^{i_X}\ar[d]^{f} & I(X)\ar[r]^{p_X}\ar[d]^{I(f)} & \Sigma X \ar[d]^{\Sigma f}\ar[r] & 0\\
0\ar[r] & Y\ar[r]^{i_Y} & I(Y)\ar[r]^{p_Y} & \Sigma Y \ar[r] & 0\\
}$$ with rows in $\mathcal{S}$.
 It is easily seen that $\underline{\Sigma f}$ does not depend on the choice of $I(f)$. We define $\Sigma \underline{f}=\underline{\Sigma f}$.

\begin{lem}\label{5-0}
Let $$0\rightarrow X_1\oplus I_1\xrightarrow{\left(
                              \begin{smallmatrix}
                               f_{11} &  f_{12} \\
                                 f_{13} &  f_{14} \\
                              \end{smallmatrix}
                            \right)} X_2\oplus I_2\xrightarrow{\left(
                              \begin{smallmatrix}
                               f_{21} &  f_{22} \\
                              \end{smallmatrix}
                            \right)} X_3 \rightarrow0$$ be a short exact sequence in $\mathcal{S}$, where $I_1,I_2\in\mathcal{I}$.
Then the sequence
 $$\underline{\mathcal{B}}(-,X_1)\xrightarrow{\underline{\mathcal{B}}(-,f_{11})} \underline{\mathcal{B}}(-,X_2)\xrightarrow{\underline{\mathcal{B}}(-,f_{21})} \underline{\mathcal{B}}(-,X_3)$$ is exact.
\end{lem}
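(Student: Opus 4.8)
The plan is to read the given short exact sequence as a conflation in the Frobenius category $(\mathcal{B},\mathcal{S})$ and to use that conflations become distinguished triangles in the triangulated stable category $\underline{\mathcal{B}}$. Abbreviating the two matrices as $F$ and $G$, the conflation $0\to X_1\oplus I_1\xrightarrow{F}X_2\oplus I_2\xrightarrow{G}X_3\to 0$ produces a triangle
$$X_1\oplus I_1\xrightarrow{\underline{F}}X_2\oplus I_2\xrightarrow{\underline{G}}X_3\longrightarrow\Sigma(X_1\oplus I_1)$$
in $\underline{\mathcal{B}}$. Representable functors on a triangulated category are cohomological, so applying $\underline{\mathcal{B}}(W,-)$ to this triangle yields a long exact sequence of abelian groups, exact in particular at the term $\underline{\mathcal{B}}(W,X_2\oplus I_2)$. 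Since $W$ is arbitrary, this is already the exactness of a sequence of functors; it remains only to rewrite the triangle in terms of $f_{11}$ and $f_{21}$.

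The reduction rests on the fact that $I_1,I_2\in\mathcal{I}$ are zero objects of $\underline{\mathcal{B}}$, so that the canonical inclusions $\iota_{X_i}\colon X_i\to X_i\oplus I_i$ are isomorphisms in $\underline{\mathcal{B}}$ (with inverses the projections). I would check that $(\iota_{X_1},\iota_{X_2},1_{X_3})$ is an isomorphism of triangles from
$$X_1\xrightarrow{\underline{f_{11}}}X_2\xrightarrow{\underline{f_{21}}}X_3\longrightarrow\Sigma X_1$$
to the triangle above. The right-hand square commutes already in $\mathcal{B}$, since $G\iota_{X_2}=f_{21}$. For the left-hand square, the two composites $F\iota_{X_1}$ and $\iota_{X_2}f_{11}$ differ by the morphism $\left(\begin{smallmatrix}0\\f_{13}\end{smallmatrix}\right)\colon X_1\to X_2\oplus I_2$, which factors through the projective-injective $I_2$ and hence vanishes in $\underline{\mathcal{B}}$; so the square commutes there. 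Thus the original triangle is isomorphic to the displayed one.

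Applying $\underline{\mathcal{B}}(W,-)$ to this last triangle then gives exactness of
$$\underline{\mathcal{B}}(W,X_1)\xrightarrow{\underline{\mathcal{B}}(W,f_{11})}\underline{\mathcal{B}}(W,X_2)\xrightarrow{\underline{\mathcal{B}}(W,f_{21})}\underline{\mathcal{B}}(W,X_3)$$
at the middle term, for every $W\in\underline{\mathcal{B}}$, which is exactly the asserted exactness of functors.

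I expect the only genuinely delicate point to be the matrix bookkeeping of the second paragraph: one must keep the paper's composition convention $gf$ straight and confirm that the stray entry $f_{13}$ produces a morphism through a projective-injective (and is therefore null in the stable category). Once the triangle has been identified, the conclusion is the formal cohomological long exact sequence, with no further computation.
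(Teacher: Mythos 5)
Your argument is sound as free-standing mathematics, and your bookkeeping is correct: the discrepancy $F\iota_{X_1}-\iota_{X_2}f_{11}=\left(\begin{smallmatrix}0\\ f_{13}\end{smallmatrix}\right)$ does factor through $I_2$ and hence vanishes in $\underline{\mathcal{B}}$, while $G\iota_{X_2}=f_{21}$ holds already in $\mathcal{B}$; once the triangle is identified, exactness at the middle term is the cohomological property of representable functors. The problem is not with any single step but with the two inputs you take for granted: that $\underline{\mathcal{B}}$ is triangulated and that conflations in $\mathcal{S}$ become distinguished triangles there.

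Those inputs are precisely Happel's theorem, which is the Example closing this subsection, and the entire point of Section 4.1 is to recover that structure as an instance of the paper's machinery: this is why the proof of Lemma \ref{5-1} is explicitly checked without using Lemma \ref{1.1}, and Proposition \ref{5.3} without using Theorem \ref{thm2}. Lemma \ref{5-0} is the first step of that from-scratch verification, feeding into Lemma \ref{5-1} and Proposition \ref{5.3}, which in turn yield the triangulation of $(\underline{\mathcal{B}},\Sigma)$. Inside the paper, your proof therefore makes the chain Lemma \ref{5-0} $\Rightarrow$ Lemma \ref{5-1} $\Rightarrow$ Proposition \ref{5.3} $\Rightarrow$ Happel circular. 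The paper instead argues elementarily in the Frobenius category, using only the exact structure and the projective-injectivity of $\mathcal{I}$: from $GF=0$ one gets $f_{21}f_{11}=-f_{22}f_{13}$, which factors through $I_2$, so $\underline{f_{21}f_{11}}=0$; conversely, if $\underline{f_{21}g}=0$, one writes $f_{21}g=ba$ with $b\colon I\to X_3$ and $I\in\mathcal{I}$, lifts $b$ through the deflation $(f_{21},f_{22})$ by projectivity of $I$, and observes that the resulting morphism $\left(\begin{smallmatrix}g-x_1a\\ -x_2a\end{smallmatrix}\right)$ is killed by $(f_{21},f_{22})$ and hence factors through the kernel $X_1\oplus I_1$, giving $\underline{g}=\underline{f_{11}}\,\underline{h_1}$ stably. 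If you want your argument to serve the lemma's purpose here, you would first have to establish by hand that a conflation induces a Hom-exact sequence in $\underline{\mathcal{B}}$ --- which is essentially this diagram chase; granted the background, your proof is the shorter one, but what the paper's proof buys is exactly independence from that background.
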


\begin{proof}
Since $f_{21}f_{11}+f_{22}f_{13}=0$, we have $\underline{f_{21}f_{11}}=0$. Consequently, $\mbox{Im}\,\underline{\mathcal{B}}(-,f_{11})\subseteq \mbox{ker} \,\underline{\mathcal{B}}(-,f_{21})$. Assume that $\underline{g}:Y\rightarrow X_2$ is a morphism in $\mbox{ker}\,\underline{\mathcal{B}}(Y,f_{21})$, then we have $\underline{f_{21}g}=0$.
There exist two morphisms $a:Y\rightarrow I$ and $b:I\rightarrow X_3$ for some $I\in\mathcal{I}$ such that $f_{21}g=ba$. Since $I$ is  projective-injective, we can assume that
$b=(f_{21},f_{22})\left(
                              \begin{smallmatrix}
                              x_1 \\
                              x_2\\
                              \end{smallmatrix}
                            \right)$.
We note that $(f_{21},f_{22})\left(
                              \begin{smallmatrix}
                              g-x_1a \\
                             -x_2a\\
                              \end{smallmatrix}
                            \right)=0$. There exists a morphism $\left(
                              \begin{smallmatrix}
                              h_1 \\
                             h_2\\
                              \end{smallmatrix}
                            \right):Y\rightarrow X_1\oplus I_1$ such that $\left(
                              \begin{smallmatrix}
                              g-x_1a \\
                             -x_2a\\
                              \end{smallmatrix}
                            \right)=\left(
                              \begin{smallmatrix}
                               f_{11} &  f_{12} \\
                                 f_{13} &  f_{14} \\
                              \end{smallmatrix}
                            \right)\left(
                              \begin{smallmatrix}
                              h_1 \\
                             h_2\\
                              \end{smallmatrix}
                            \right)$. Thus $\underline{g}=\underline{h_1f_{11}}\in\mbox{Im}\,\underline{\mathcal{B}}(Y,f_{11})$.
Therefore, $\mbox{ker}\,\underline{\mathcal{B}}(Y,f_{21})=\mbox{Im}\,\underline{\mathcal{B}}(Y,f_{11})$.
\end{proof}

\begin{lem}\label{5-1}
Assume that the following diagram
$$\begin{gathered}\xymatrix{
0\ar[r] & X_1\ar[r]^{i_1}\ar[d]^{f_1} & I(X_1)\ar[r]^{p_1}\ar[d]^{a_1} & \Sigma X_1\ar@{=}[d]\ar[r] & 0\\
0\ar[r] & X_2\ar[r]^{f_2} & X_3\ar[r]^{f_3} & \Sigma X_1 \ar[r] & 0\\
}\end{gathered}\eqno(4.1)$$
is commutative with rows in $\mathcal{S}$.
Then the  complex
 $$\begin{gathered} X_1\xrightarrow{\underline{f_1}} X_2\xrightarrow{\underline{f_2}} X_3\xrightarrow{\underline{f_3}} \Sigma X_1 \end{gathered}\eqno(4.2)$$
 belongs to $C^{\mbox{\tiny ex}}_{3\mbox{-}\Sigma}({\iota}(\underline{\mathcal{B}}))$.
\end{lem}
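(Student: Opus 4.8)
The plan is to read condition ``$(4.2)\in C^{\mbox{\tiny ex}}_{3\mbox{-}\Sigma}({\iota}(\underline{\mathcal{B}}))$'' concretely: since the complex is $3$-$\Sigma$-periodic with $X_{k+3}=\Sigma X_k$ and $f_{k+3}=\Sigma f_k$, and since $\Sigma$ is an autoequivalence of $\underline{\mathcal{B}}$ (hence preserves exactness of sequences of representable functors), it suffices to verify that the induced sequences of Yoneda functors are exact at three consecutive positions, say at $X_2$, at $X_3$, and at $\Sigma X_1$, covering the three residues modulo $3$. At each position I would produce a conflation in $\mathcal{S}$ (allowing projective--injective summands) whose two corner maps are exactly the two maps of $(4.2)$ flanking that position, and then read off exactness from Lemma \ref{5-0}. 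Signs will be irrelevant throughout, since they do not affect kernels or images.

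The position $X_3$ is immediate: the bottom row of $(4.1)$ is already a conflation $0\to X_2\xrightarrow{f_2}X_3\xrightarrow{f_3}\Sigma X_1\to 0$ in $\mathcal{S}$, so Lemma \ref{5-0} (with trivial summands) gives exactness of $\underline{\mathcal{B}}(-,X_2)\xrightarrow{\underline{f_2}}\underline{\mathcal{B}}(-,X_3)\xrightarrow{\underline{f_3}}\underline{\mathcal{B}}(-,\Sigma X_1)$. For the position $X_2$ I would reinterpret the whole of $(4.1)$ as exhibiting the bottom row as the pushout of the canonical sequence $0\to X_1\xrightarrow{i_1}I(X_1)\xrightarrow{p_1}\Sigma X_1\to 0$ along $f_1$; concretely the commutativity relations $f_2f_1=a_1i_1$ and $f_3a_1=p_1$ read off from $(4.1)$ show that $\binom{-f_1}{i_1}\colon X_1\to X_2\oplus I(X_1)$ is an inflation with cokernel $X_3$ via $(f_2\ a_1)$, giving a conflation $0\to X_1\xrightarrow{\binom{-f_1}{i_1}}X_2\oplus I(X_1)\xrightarrow{(f_2\ a_1)}X_3\to 0$ in $\mathcal{S}$ (pushouts of conflations are conflations). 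Applying Lemma \ref{5-0} with $I_1=0$, $I_2=I(X_1)$ and corner maps $-f_1$, $f_2$ yields exactness at $X_2$.

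The hard part is the third, ``connecting'' position $\Sigma X_1$. Here I would dualize the previous construction by pushing the canonical sequence $0\to X_3\xrightarrow{i_{X_3}}I(X_3)\xrightarrow{p_{X_3}}\Sigma X_3\to 0$ along $f_3$, obtaining a conflation $0\to X_3\xrightarrow{\binom{-f_3}{i_{X_3}}}\Sigma X_1\oplus I(X_3)\xrightarrow{(\delta\ \ast)}\Sigma X_2\to 0$ in $\mathcal{S}$, whose cokernel is $\Sigma X_2$ and whose induced corner map is some $\delta\colon\Sigma X_1\to\Sigma X_2$. Lemma \ref{5-0} then gives exactness of $\underline{\mathcal{B}}(-,X_3)\xrightarrow{\underline{-f_3}}\underline{\mathcal{B}}(-,\Sigma X_1)\xrightarrow{\underline{\delta}}\underline{\mathcal{B}}(-,\Sigma X_2)$. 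The genuine obstacle is to identify $\underline{\delta}$ with $\Sigma\underline{f_1}$ up to sign, because the map $\Sigma f_1$ is Happel's suspension of $f_1$, defined by the distinct comparison diagram between the canonical sequences of $X_1$ and of $X_2$, whereas $\delta$ is the connecting morphism of the bottom row. I would bridge the two by comparing the bottom-row conflation with the canonical sequence of $X_2$ and invoking the pushout description $(4.1)$, concluding $\underline{\delta}=-\Sigma\underline{f_1}$ in $\underline{\mathcal{B}}$ (the sign matching the $(-1)^3$ appearing in the left rotation of a $3$-$\Sigma$-sequence, but immaterial for exactness). This matches the flanking map $\Sigma\underline{f_1}$ of $(4.2)$ up to sign and so establishes exactness at $\Sigma X_1$.

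Finally, I would propagate exactness from the three checked positions to all indices using the periodicity $f_{k+3}=\Sigma f_k$ together with the fact that the autoequivalence $\Sigma$ carries the segment around $X_k$ to the segment around $X_{k+3}=\Sigma X_k$ and preserves exactness of the associated Hom-sequences. Since the complex condition $\mathrm{im}\subseteq\ker$ at each spot is already subsumed by the exactness delivered by Lemma \ref{5-0}, this shows $(4.2)$ is an exact $3$-$\Sigma$-periodic complex, i.e. $(4.2)\in C^{\mbox{\tiny ex}}_{3\mbox{-}\Sigma}({\iota}(\underline{\mathcal{B}}))$.
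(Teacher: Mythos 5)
Your overall skeleton coincides with the paper's: produce three conflations with projective--injective ``slack'' summands, feed them into Lemma \ref{5-0} at the three positions $X_2$, $X_3$, $\Sigma X_1$, and propagate by periodicity. Your treatment of the positions $X_2$ and $X_3$ is correct (at $X_3$ you even simplify matters, applying Lemma \ref{5-0} to the bottom row of (4.1) directly, where the paper uses a padded version). The gap is at the third position, exactly where you locate ``the hard part''. The pushout of the canonical conflation $0\to X_3\xrightarrow{i_{X_3}}I(X_3)\to\Sigma X_3\to 0$ along $f_3$ gives a conflation $0\to X_3\to\Sigma X_1\oplus I(X_3)\to E\to 0$ whose cokernel $E$ is \emph{not} $\Sigma X_2$: it is only stably isomorphic to it. Concretely, take $\mathcal{B}=\mathrm{mod}\,k[x]/(x^2)$, $X_1=k$, $X_2=0$, $X_3=\Sigma X_1=k$, $f_3=\mathrm{id}$; then $E\cong I(X_3)=k[x]/(x^2)\neq 0=\Sigma X_2$. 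So Lemma \ref{5-0} hands you exactness of $\underline{\mathcal{B}}(-,X_3)\to\underline{\mathcal{B}}(-,\Sigma X_1)\to\underline{\mathcal{B}}(-,E)$, not of the sequence actually needed, whose third term is $\underline{\mathcal{B}}(-,\Sigma X_2)$ with the map $\Sigma\underline{f_1}$.

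To bridge, you would need a stable isomorphism $\underline{e}:E\to\Sigma X_2$ satisfying $\underline{e}\,\underline{\delta}=\pm\Sigma\underline{f_1}$, and producing it---indeed, even proving the bare complex condition $\Sigma\underline{f_1}\cdot\underline{f_3}=0$, which you cannot borrow from the triangle axioms of $\underline{\mathcal{B}}$ since this section deliberately avoids assuming Happel's theorem---already requires the ingredient your sketch omits. That ingredient is the paper's key construction: compare the bottom row of (4.1) with the canonical conflation $0\to X_2\xrightarrow{i_2}I(X_2)\xrightarrow{p_2}\Sigma X_2\to 0$. The pushout property of the left square of (4.1) yields a unique $a_2:X_3\to I(X_2)$ with $a_2f_2=i_2$ and $a_2a_1=I(f_1)$, and a second use of the same uniqueness gives $p_2a_2=\Sigma f_1\cdot f_3$; the resulting morphism of conflations $(1_{X_2},a_2,\Sigma f_1)$ has identity on the kernels, so its right-hand square is bicartesian and one obtains the conflation $0\to X_3\xrightarrow{\left(\begin{smallmatrix}f_3\\ a_2\end{smallmatrix}\right)}\Sigma X_1\oplus I(X_2)\xrightarrow{(\Sigma f_1,\,-p_2)}\Sigma X_2\to 0$, whose corner maps are $f_3$ and $\Sigma f_1$ on the nose, so that Lemma \ref{5-0} applies verbatim. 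Your closing remark does gesture at precisely this comparison, but only as a gesture, while the construction you actually perform (pushing out $X_3$'s canonical conflation along $f_3$) is the one that fails to deliver the required conflation.
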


\begin{proof} We check the lemma without using Lemma \ref{1.1}.
By the definition of $\Sigma$, we have the following commutative diagram
 $$\xymatrix{
0\ar[r] & X_1\ar[r]^{i_1}\ar[d]^{f_1} & I(X_1)\ar[r]^{p_1}\ar[d]^{I(f_1)} & \Sigma X_1\ar[d]^{\Sigma f_1}\ar[r] & 0\\
0\ar[r] & X_2\ar[r]^{i_2} & I(X_2)\ar[r]^{p_2} & \Sigma X_2 \ar[r] & 0\\
}$$ with rows in $\mathcal{S}$. Since the left square in diagram (4.1) is a pushout, there exists a unique morphism $a_2: X_3\rightarrow I(X_2)$ such that $i_2=a_2f_2$ and $I(f_1)=a_2a_1$. Noting that  $$(p_2a_2)f_2=p_2i_2=0=(\Sigma f_1\cdot f_3)f_2,$$
 $$(p_2a_2)a_1=p_2I(f_1)=\Sigma f_1\cdot p_1=(\Sigma f_1\cdot f_3)a_1,$$
 we obtain $p_2a_2=\Sigma f_1\cdot f_3$ by the universal property of pushout.
Thus we have the following commutative diagram
$$\xymatrix{
0\ar[r] & X_1\ar@{=}[d]\ar[r]^{\left(
                              \begin{smallmatrix}
                               f_{1} \\
                               i_{1} \\
                              \end{smallmatrix}
                            \right)\ \ \ \ } & X_2\oplus I(X_1) \ar[d]^{(0,1)}\ar[r]^{\ \ \ \ (f_2,-a_1)} & X_3\ar[r]\ar[d]^{-f_3} & 0\\
0\ar[r] & X_1\ar[r]^{i_1}\ar[d]^{f_1} & I(X_1)\ar[r]^{p_1}\ar[d]^{a_1} & \Sigma X_1\ar@{=}[d]\ar[r] & 0\\
0\ar[r] & X_2\ar[r]^{f_2}\ar@{=}[d] & X_3\ar[r]^{f_3}\ar[d]^{a_2} & \Sigma X_1 \ar[r]\ar[d]^{\Sigma f_1} & 0\\
0\ar[r] & X_2\ar[r]^{i_2} & I(X_2)\ar[r]^{p_2} & \Sigma X_2\ar[r] & 0\\
}$$
with rows in $\mathcal{S}$.

Therefore, we have three short exact sequences as follows:
$$0\rightarrow X_1\xrightarrow{\left(
                              \begin{smallmatrix}
                             f_1 \\
                             i_1\\
                              \end{smallmatrix}
                            \right)}X_2\oplus I(X_1)\xrightarrow{(f_2,-a_1)} X_3\rightarrow 0,$$
$$0\rightarrow X_2\oplus I(X_1)\xrightarrow{\left(
                              \begin{smallmatrix}
                             f_2 & -a_1 \\
                             0 & 1\\
                              \end{smallmatrix}
                            \right)} X_3\oplus I(X_1)\xrightarrow{(f_3,p_1)} \Sigma X_1\rightarrow 0,$$
 $$0\rightarrow X_3\xrightarrow{\left(
                              \begin{smallmatrix}
                             f_3 \\
                             a_2\\
                              \end{smallmatrix}
                            \right)}\Sigma X_1 \oplus I(X_2)\xrightarrow{(\Sigma f_1,-p_2)} \Sigma X_2\rightarrow 0.$$
 Now the lemma follows from Lemma \ref{5-0}.
\end{proof}

Denote by $\Delta$ the collection of 3-$\Sigma$-sequences in $C^{{\tiny\mbox{ex}}}_{3\mbox{-}\Sigma}({\iota}(\underline{\mathcal{B}}))$ which are isomorphic to the ones in the form of (4.2).

\begin{prop}\label{5.3}
The functor $Z_1:\Delta\rightarrow\mbox{mod}\,\mathcal{\underline{B}}$ is dense.
\end{prop}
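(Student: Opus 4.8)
The plan is to show that every finitely presented functor $M\in\mbox{mod}\,\underline{\mathcal{B}}$ is isomorphic to $Z_1X_\bullet$ for some $X_\bullet\in\Delta$. Since $M$ is finitely presented, the Yoneda lemma provides a morphism $\underline{u}\colon A\to B$ in $\underline{\mathcal{B}}$ together with an exact sequence
$$\underline{\mathcal{B}}(-,A)\xrightarrow{\underline{\mathcal{B}}(-,\underline{u})}\underline{\mathcal{B}}(-,B)\to M\to0.$$
The key observation is that for any sequence of the form (4.2) the object $Z_1X_\bullet$ is by definition the kernel of $\underline{\mathcal{B}}(-,\underline{f_1})$, so it depends only on the first morphism $\underline{f_1}$. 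Hence the whole problem reduces to producing a morphism $\underline{w}$ in $\underline{\mathcal{B}}$ such that $\ker\underline{\mathcal{B}}(-,\underline{w})\cong M$ and such that $\underline{w}$ occurs as the first morphism of a sequence of the form (4.2).

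First I would build a standard triangle on $\underline{u}$ itself. Choosing a representative $f_1\colon A\to B$ of $\underline{u}$ and forming the pushout of the inflation $i_A\colon A\to I(A)$ along $f_1$, as in Happel's construction \cite{[Ha]}, I obtain a commutative diagram of the shape (4.1) with $X_1=A$ and $X_2=B$, and therefore an exact $3$-$\Sigma$-sequence
$$A\xrightarrow{\underline{u}}B\xrightarrow{\underline{f_2}}X_3\xrightarrow{\underline{f_3}}\Sigma A$$
of the form (4.2). By Lemma \ref{5-1} this sequence lies in $C^{\mbox{\tiny ex}}_{3\mbox{-}\Sigma}(\iota(\underline{\mathcal{B}}))$, so applying $\underline{\mathcal{B}}(-,?)$ yields an exact sequence of functors
$$\underline{\mathcal{B}}(-,A)\xrightarrow{\underline{\mathcal{B}}(-,\underline{u})}\underline{\mathcal{B}}(-,B)\xrightarrow{\underline{\mathcal{B}}(-,\underline{f_2})}\underline{\mathcal{B}}(-,X_3)\xrightarrow{\underline{\mathcal{B}}(-,\underline{f_3})}\underline{\mathcal{B}}(-,\Sigma A).$$
Exactness at the two middle terms gives $M=\mbox{coker}\,\underline{\mathcal{B}}(-,\underline{u})\cong\mbox{im}\,\underline{\mathcal{B}}(-,\underline{f_2})=\ker\underline{\mathcal{B}}(-,\underline{f_3})$. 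Thus the morphism $\underline{w}:=\underline{f_3}\colon X_3\to\Sigma A$ satisfies $\ker\underline{\mathcal{B}}(-,\underline{w})\cong M$.

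It then remains to realize $\underline{w}$ as the first morphism of a sequence in $\Delta$. Repeating the pushout construction once more, now starting from a representative of $\underline{w}\colon X_3\to\Sigma A$, I obtain a diagram of the shape (4.1) and hence a sequence
$$X_3\xrightarrow{\underline{w}}\Sigma A\to Y_3\to\Sigma X_3$$
of the form (4.2), which therefore belongs to $\Delta$. By the opening observation, $Z_1$ of this sequence is $\ker\underline{\mathcal{B}}(-,\underline{w})\cong M$. This exhibits $M$ as $Z_1X_\bullet$ for an $X_\bullet\in\Delta$ and proves that $Z_1\colon\Delta\to\mbox{mod}\,\underline{\mathcal{B}}$ is dense.

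The argument is genuinely routine once one notes that $Z_1$ only sees the first morphism of a sequence; the only points demanding care are that completing an arbitrary morphism to a diagram of shape (4.1) is always possible (the pushout-along-an-inflation step), and the bookkeeping in the long exact Hom-sequence that identifies $\mbox{coker}\,\underline{\mathcal{B}}(-,\underline{u})$ with $\ker\underline{\mathcal{B}}(-,\underline{f_3})$. Neither is a real obstacle, so I do not expect a hard step here; the proposition is in essence a reformulation of the fact that $\underline{\mathcal{B}}$ is triangulated with the standard triangles as its distinguished triangles.
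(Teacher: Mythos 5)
Your proposal is correct: every step is justified, and the two ingredients you rely on (the pushout of an inflation $i_X$ along an arbitrary morphism, which realizes any morphism of $\underline{\mathcal{B}}$ as the left-hand vertical map of a diagram of shape (4.1), and the exactness statement of Lemma \ref{5-1}) are exactly the right tools in this setting. Your route does differ from the paper's in the final step. The paper also starts from a projective presentation $\underline{\mathcal{B}}(-,X_1)\to\underline{\mathcal{B}}(-,X_2)\to M\to 0$ and builds the sequence $X_\bullet=(X_1\to X_2\to X_3\to\Sigma X_1)\in\Delta$, but it then stays with this single complex and shows that its translates $X_\bullet[1]$ and $X_\bullet[2]$ again lie in $\Delta$: this uses the auxiliary morphism $a_2$ constructed inside the proof of Lemma \ref{5-1}, a second diagram of shape (4.1) whose bottom row is $X_3\to\Sigma X_1\oplus I(X_2)\to\Sigma X_2$, and the stable isomorphism $\Sigma X_1\oplus I(X_2)\cong\Sigma X_1$; the conclusion is $Z_1(X_\bullet[2])=M$. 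You instead avoid the rotation argument entirely: you use exactness of the first sequence only to identify $M\cong\ker\underline{\mathcal{B}}(-,\underline{f_3})$, and then perform one more pushout to manufacture a \emph{fresh} sequence in $\Delta$ whose first morphism is $\underline{f_3}$, exploiting the observation that $Z_1$ sees only the first morphism. Your version is slightly shorter and dodges the sign and direct-summand bookkeeping hidden in the shift; the paper's version has the side benefit of verifying, for these complexes, that $\Delta$ is closed under the translation functor, which is one of the standing hypotheses of Theorems \ref{thm1} and \ref{thm2} and is therefore useful information beyond density. Both arguments rest on the same lemma and the same Happel-style construction, so the difference is one of organization rather than of substance, but it is a genuine alternative and it is sound.
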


\begin{proof} We check the result without using Theorem \ref{thm2}.
For each $M\in\mbox{mod}\,\mathcal{C}$, we choose a projective presentation
$$\underline{\mathcal{B}}(-,X_1)\xrightarrow{\underline{\mathcal{B}}(-,f_1)}\underline{\mathcal{B}}(-,X_2)\rightarrow M\rightarrow 0.$$
Then we have a commutative diagram (4.1). By Lemma \ref{5-1}, we get
$$X_\bullet=(X_1\xrightarrow{\underline{f_1}} X_2\xrightarrow{\underline{f_2}} X_3\xrightarrow{\underline{f_3}} \Sigma X_1)\in\Delta.$$
Note that we have a commutative diagram
$$\xymatrix{
0\ar[r] & X_2\ar[r]^{i_2}\ar[d]^{f_2} & I(X_2)\ar[r]^{p_2}\ar[d]^{\left(
                              \begin{smallmatrix}
                            0 \\
                             1\\
                              \end{smallmatrix}
                            \right)} & \Sigma X_2\ar@{=}[d]\ar[r] & 0\\
0\ar[r] & X_3\ar[r]^{\left(
                              \begin{smallmatrix}
                             f_3 \\
                             a_2\\
                              \end{smallmatrix}
                            \right)\ \ \ \ } & \Sigma X_1\oplus I(X_2)\ar[r]^{\ \ \ \ (-\Sigma f_1,p_2)} & \Sigma X_2 \ar[r] & 0\\
}$$
with rows in $\mathcal{S}$. Lemma \ref{5-1} implies that $X_\bullet[1]\in\Delta$. Therefore, $X_\bullet[2]\in\Delta$ and $Z_1(X_\bullet[2])=M$.
\end{proof}


\begin{example} (\cite[Theorem 2.6]{[Ha]}) Let $(\mathcal{B},\mathcal{S})$ be a Frobenius category, then $(\underline{\mathcal{B}},\Sigma,\Delta)$ is a triangulated category.
\end{example}

\subsection{Standard construction of $n$-angulated categories}
In this subsection, we assume that  $\mathcal{T}$ is a triangulated category with the suspension functor $\Sigma$ and $\mathcal{C}$ is a full subcategory of $\mathcal{T}$. Suppose  $d$ is a positive integer. Recall that the subcategory $\mathcal{C}$ is called $d$-$rigid$ if $\mathcal{T}(\mathcal{C},\Sigma^i\mathcal{C})=0$ for $i=1,2,\cdots,d-1$.
The subcategory $\mathcal{C}$ is called $d$-$cluster\ tilting$ if $\mathcal{C}$ is functorially finite and
$$\left.
    \begin{array}{rl}
      \mathcal{C}& =\{X\in\mathcal{T}|\mathcal{T}(X,\Sigma^i\mathcal{C})=0, \forall i=1,\cdots,d-1\} \\
       &  =\{X\in\mathcal{T}|\mathcal{T}(\mathcal{C},\Sigma^iX)=0, \forall i=1,\cdots,d-1\}.\\
    \end{array}
  \right.
$$
Let $\mathcal{X}$, $\mathcal{Y}$ and $\mathcal{Z}$ be full subcategories of $\mathcal{T}$. We denote by $\mathcal{X}\ast\mathcal{Y}$ the class of objects $T$ in $\mathcal{T}$ such that there is a triangle $X\rightarrow T\rightarrow Y\rightarrow \Sigma X$ with $X\in\mathcal{X}$ and $Y\in\mathcal{Y}$. The octahedral axiom implies that $(\mathcal{X}\ast\mathcal{Y})\ast\mathcal{Z}=\mathcal{X\ast}(\mathcal{Y}\ast\mathcal{Z})$.

\begin{rem}\label{rem5}
(a) Let $\mathcal{C}$ be a $d$-rigid subcategory which is closed under $\Sigma^d$, then $\mathcal{T}(\mathcal{C},\Sigma^i\mathcal{C})=0$ for each $i\notin d\mathbb{Z}$.

(b) The subcategory $\mathcal{C}$ is $d$-cluster tilting if and only if $\mathcal{C}$ is  $d$-rigid  and
$\mathcal{T}=\mathcal{C}\ast\Sigma\mathcal{C}\ast\Sigma^2\mathcal{C}\ast\cdots\ast\Sigma^{d-1}\mathcal{C}$.
\end{rem}

\begin{proof}
(a) It is clear. (b) It is proved in \cite{[ZZ]}.
\end{proof}

\begin{lem}\label{5.1}
Let $\mathcal{C}$ be a $d$-rigid subcategory which is closed under $\Sigma^d$. Suppose that there are the following triangles:
$$X_1\xrightarrow{\alpha_1} X_2\xrightarrow{\alpha_2'} X_{2.5}\xrightarrow{\alpha_{d+2}^1} \Sigma X_1, $$
$$X_{i.5}\xrightarrow{\alpha_{i}''} X_{i+1}\xrightarrow{\alpha_{i+1}'} X_{i+1.5}\xrightarrow{\alpha_{d+2}^{i}} \Sigma X_{i.5},$$
$$X_{d.5}\xrightarrow{\alpha_d''} X_{d+1}\xrightarrow{\alpha_{d+1}} X_{d+2}\xrightarrow{\alpha_{d+2}^d} \Sigma X_{d.5},$$
where $2\leq i\leq d-1$, $X_j\in\mathcal{C}$ for $j\in\mathbb{Z}$ and $X_k\in\mathcal{T}$ for $k\notin\mathbb{Z}$.
Then
$$\begin{gathered}X_1\xrightarrow{\alpha_1} X_2\xrightarrow{\alpha_2} \cdots\xrightarrow{\alpha_{d+1}} X_{d+2}\xrightarrow{\alpha_{d+2}} \Sigma^d X_1\end{gathered}\eqno{(4.3)} $$
belongs to $C^{\tiny\mbox{ex}}_{(d+2)\mbox{-}\Sigma^d}(\iota(\mathcal{C}))$, where $\alpha_i=\alpha_i''\alpha_i'$ for $2\leq i\leq d$ and $$\alpha_{d+2}=\Sigma^{d-1}\alpha_{d+2}^1\cdot\Sigma^{d-2}\alpha_{d+2}^2\cdots\Sigma\alpha_{d+2}^{d-1}\cdot\alpha_{d+2}^d.$$
\end{lem}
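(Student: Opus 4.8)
The plan is to verify two things: that (4.3) is a genuine $(d+2)$-$\Sigma^d$-sequence over $\mathcal{C}$, and that it is exact. The first is bookkeeping with the triangulated structure; for the second I would fix $C\in\mathcal{C}$, apply the cohomological functor $\mathcal{T}(C,-)$ to all of the given triangles, and splice the resulting long exact sequences, using $d$-rigidity (Remark \ref{rem5}(a)) to annihilate every intermediate contribution. Throughout, denote by $T_1,\dots,T_d$ the given triangles read in order, so that $T_k$ has middle term $X_{k+1}$ ($T_1$ is the first triangle, $T_d$ the last).

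For the first point, all the terms $X_1,\dots,X_{d+2}$ lie in $\mathcal{C}$ by hypothesis and $\Sigma^d X_1\in\mathcal{C}$ because $\mathcal{C}$ is closed under $\Sigma^d$; hence each $\alpha_i=\alpha_i''\alpha_i'$ and the composite $\alpha_{d+2}$ are morphisms of $\mathcal{C}$, so (4.3) is a $(d+2)$-$\Sigma^d$-sequence. To see it is a complex, $\alpha_{i+1}\alpha_i=0$ for every $i$ read modulo the period, I would check one relation at a time. In the interior this is immediate, since $\alpha_{i+1}'\alpha_i''=0$: the maps $\alpha_i''$ and $\alpha_{i+1}'$ are consecutive morphisms of the same triangle $T_i$, hence compose to zero. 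The two boundary relations $\alpha_{d+2}\alpha_{d+1}=0$ and $\Sigma^d\alpha_1\cdot\alpha_{d+2}=0$ reduce, after peeling off the suspended connecting maps making up $\alpha_{d+2}$, to the vanishing of $\alpha_{d+2}^{d}\,\alpha_{d+1}$ (two consecutive morphisms of $T_d$) and of $\Sigma\alpha_1\cdot\alpha_{d+2}^{1}$ (two consecutive morphisms of a rotation of $T_1$).

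For exactness it suffices to show, for each $C\in\mathcal{C}$, that applying $\mathcal{T}(C,-)$ to (4.3) yields an exact complex of abelian groups. Write $P_k^{(j)}=\mathcal{T}(C,\Sigma^j X_k)$ and $Q_k^{(j)}=\mathcal{T}(C,\Sigma^j X_{k.5})$. By Remark \ref{rem5}(a) we have $P_k^{(j)}=0$ unless $d\mid j$, and $P_k^{(0)}=\mathcal{C}(C,X_k)$. Feeding the triangles into $\mathcal{T}(C,-)$ one after another and using this vanishing, an induction on $k$ shows that $Q_k^{(j)}=0$ unless $j\equiv 0,-1,\dots,-(k-1)\pmod d$ (equivalently $X_{k.5}\in\mathcal{C}\ast\Sigma\mathcal{C}\ast\cdots\ast\Sigma^{k-1}\mathcal{C}$). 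The homology of the complex at the spot $X_i$ can then be identified, via the long exact sequence of $T_{i-1}$ and the surrounding vanishings, with a single middle group $Q_{i+1}^{(-1)}$; for instance the homology at $X_2$ equals $\mbox{ker}(\bar\alpha_2\colon \mbox{coker}\,\alpha_1\to P_3^{(0)})=Q_3^{(-1)}$, where $\bar\alpha_2$ is the map induced by $\alpha_2$.

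Finally I would show these middle groups vanish by propagating them up the tower: wherever the two neighbouring $P$-groups vanish, the long exact sequence of $T_k$ gives an isomorphism $Q_{k+1}^{(j-1)}\cong Q_k^{(j)}$, so that $Q_{i+1}^{(-1)}\cong Q_{i+2}^{(-2)}\cong\cdots\cong Q_d^{(-(d-i))}$, and at this last stage the long exact sequence of $T_d$ together with $P_{d+1}^{(\ast)}=P_{d+2}^{(\ast)}=0$ in the relevant degree forces the group to be zero. Exactness at the wraparound spots $X_{d+1},X_{d+2},X_1$ is obtained the same way, the period $\Sigma^d$ closing the tower up; here the iterated composite $\alpha_{d+2}$ replaces a single connecting map. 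I expect the main obstacle to be exactly this residue bookkeeping modulo $d$: one must ensure that the chain of shift-isomorphisms always terminates at a degree annihilated by rigidity — which needs separate attention for small $d$, where the intermediate vanishing windows overlap — and one must treat the wraparound at $X_1$, where the single connecting map is replaced by the long composite $\alpha_{d+2}$.
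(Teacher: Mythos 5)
Your proposal is correct and is essentially the paper's own argument: both apply $\mathcal{T}(C,-)$ to the given triangles, use $d$-rigidity (Remark \ref{rem5}(a)) to annihilate the groups $\mathcal{T}(C,\Sigma^j X_k)$ with $d\nmid j$, and use the connecting-map isomorphisms (your $Q_{k+1}^{(j-1)}\cong Q_k^{(j)}$, the paper's $\mathcal{T}(A,\Sigma^i X_{j+1.5})\cong\mathcal{T}(A,\Sigma^{i+1}X_{j.5})$) to reduce the required vanishing to one forced by the last triangle and rigidity, your chain $Q_{i+1}^{(-1)}\cong\cdots\cong Q_d^{(i-d)}=0$ being precisely the paper's $\mathcal{T}(A,\Sigma^{-1}X_{i+1.5})\cong\mathcal{T}(\Sigma^d A,\Sigma^{i}X_{d.5})=0$. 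The only difference is organizational: you compute the homology at each spot and show it vanishes, whereas the paper glues the truncated exact sequences directly and treats the wraparound by the same factorization of $(\alpha_{d+2})_*$ as an epimorphism followed by isomorphisms followed by a monomorphism.
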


\begin{proof}
For each object $A\in\mathcal{C}$, applying the functor $\mathcal{T}(A,-)$ to the above triangles, we obtain the following exact sequences $$0\rightarrow\mathcal{T}(A,\Sigma^{-1} X_{2.5})\rightarrow\mathcal{T}(A, X_{1})\rightarrow\mathcal{T}(A, X_{2})\rightarrow\mathcal{T}(A, X_{2.5})\rightarrow 0,$$
$$0\rightarrow\mathcal{T}(A,\Sigma^{-1} X_{i+1.5})\rightarrow\mathcal{T}(A, X_{i.5})\rightarrow\mathcal{T}(A, X_{i+1})
\rightarrow\mathcal{T}(A, X_{i+1.5})\rightarrow\mathcal{T}(A,\Sigma X_{i.5})\rightarrow 0,$$
$$0\rightarrow\mathcal{T}(A,X_{d.5})\rightarrow\mathcal{T}(A, X_{d+1})\rightarrow\mathcal{T}(A, X_{d+2})
\rightarrow\mathcal{T}(A,\Sigma X_{d.5})\rightarrow 0,$$
where $2\leq i\leq d-1$. Moreover, for $1\leq i\leq d-2$ and $2\leq j\leq d-1$, we have
 $\mathcal{T}(A,\Sigma^iX_{2.5})=0$, $\mathcal{T}(A,\Sigma^jX_{d.5})=0$ and
$\mathcal{T}(A,\Sigma^i\alpha_{d+2}^j): \mathcal{T}(A,\Sigma^iX_{j+1.5})\rightarrow\mathcal{T}(A, \Sigma^{i+1}X_{j.5})$ is an isomorphism.
Since $\mathcal{T}(A,\Sigma X_{i.5})\cong\mathcal{T}(A,\Sigma^{i-1} X_{2.5})=0$ and
$\mathcal{T}(A,\Sigma^{-1} X_{i+1.5})\cong\mathcal{T}(\Sigma^dA,\Sigma^{d-1} X_{i+1.5})\cong\mathcal{T}(\Sigma^dA,\Sigma^{i} X_{d.5})=0$
 for $2\leq i\leq d-1$, gluing the above exact sequences, we have the following long exact sequence
 $$\mathcal{T}(A,X_1)\rightarrow\mathcal{T}(A,X_2)\rightarrow\cdots\rightarrow\mathcal{T}(A,X_{d+2})\rightarrow\mathcal{T}(A,\Sigma^d X_1)\rightarrow\mathcal{T}(A,\Sigma^d X_2),$$
 where the last but one morphism is the composition
 $$\mathcal{T}(A, X_{d+2})\twoheadrightarrow\mathcal{T}(A,\Sigma X_{d.5})\xrightarrow{\simeq}\mathcal{T}(A,\Sigma^2 X_{d-1.5})\xrightarrow{\simeq}\cdots\xrightarrow{\simeq}\mathcal{T}(A,\Sigma^{d-1} X_{2.5})\rightarrowtail\mathcal{T}(A,\Sigma^d X_{1}).$$
\end{proof}

Denote by $\Theta$ the class of $(d+2)$-$\Sigma^d$-sequences in $C^{\tiny\mbox{ex}}_{(d+2)\mbox{-}\Sigma^d}(\iota(\mathcal{C}))$ of the form (4.3).

\begin{prop}\label{5.2}
Let $\mathcal{C}$ be a $d$-cluster tilting subcategory which is closed under $\Sigma^d$, then the functor $Z_1:\Theta\rightarrow \mbox{mod}\,\mathcal{C}$ is dense.
\end{prop}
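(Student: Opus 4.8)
To prove density I must show that every $M\in\mbox{mod}\,\mathcal{C}$ is isomorphic to $Z_1X_\bullet=\mbox{ker}\,f_1$ for some $X_\bullet\in\Theta$. The plan is to follow the pattern of Proposition \ref{5.3}: extract a morphism of $\mathcal{C}$ from a projective presentation of $M$, inflate it to a sequence of the form (4.3) by constructing the input triangles of Lemma \ref{5.1}, and then recover $M$ as the kernel of a rotation. First I would fix a projective presentation $\mathcal{C}(-,X_1)\xrightarrow{\mathcal{C}(-,\alpha_1)}\mathcal{C}(-,X_2)\to M\to0$ with $X_1,X_2\in\mathcal{C}$; this is possible because $M$, being finitely presented, is the cokernel of a morphism of representable functors, which by the Yoneda lemma has the form $\mathcal{C}(-,\alpha_1)$ for some $\alpha_1\colon X_1\to X_2$ in $\mathcal{C}$.

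The heart of the matter is to manufacture, starting from $\alpha_1$, the $d$ triangles feeding into Lemma \ref{5.1}, with all integer-indexed objects $X_1,\dots,X_{d+2}$ lying in $\mathcal{C}$. I would begin with the triangle $X_1\xrightarrow{\alpha_1}X_2\xrightarrow{\alpha_2'}X_{2.5}\to\Sigma X_1$ in $\mathcal{T}$, so that $X_{2.5}\in\mathcal{C}\ast\Sigma\mathcal{C}$, and then produce the remaining triangles inductively by choosing left $\mathcal{C}$-approximations $X_{i.5}\to X_{i+1}$ (available because $\mathcal{C}$ is functorially finite) and taking $X_{i+1.5}$ to be the cone. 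The Hom-vanishing required to apply Lemma \ref{5.1}, for instance $\mathcal{T}(A,\Sigma^i X_{2.5})=0$ for $A\in\mathcal{C}$ and $1\le i\le d-2$, follows from $d$-rigidity together with Remark \ref{rem5}(a), which yields $\mathcal{T}(\mathcal{C},\Sigma^i\mathcal{C})=0$ for $i\notin d\mathbb{Z}$. I expect the main obstacle to be the closure of this process: one must verify that after exactly $d-1$ cone steps the final cone $X_{d+2}$ lands back in $\mathcal{C}$. This is precisely where the full force of $d$-cluster tilting enters, through the intrinsic description $\mathcal{C}=\{X\in\mathcal{T}\mid\mathcal{T}(\mathcal{C},\Sigma^iX)=0,\ 1\le i\le d-1\}$ and the decomposition $\mathcal{T}=\mathcal{C}\ast\Sigma\mathcal{C}\ast\cdots\ast\Sigma^{d-1}\mathcal{C}$ of Remark \ref{rem5}(b); care is needed here because $\mathcal{C}$ is closed under $\Sigma^d$, so $\mathcal{T}(\mathcal{C},\Sigma^d\mathcal{C})$ need not vanish. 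This step is the standard construction of \cite{[GKO]}, which I would invoke to secure the triangles.

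With the triangles in hand, Lemma \ref{5.1} immediately places the spliced sequence $X_\bullet=(X_1\xrightarrow{\alpha_1}X_2\to\cdots\to X_{d+2}\xrightarrow{\alpha_{d+2}}\Sigma^dX_1)$ in $C^{\tiny\mbox{ex}}_{(d+2)\mbox{-}\Sigma^d}(\iota(\mathcal{C}))$, hence in $\Theta$. To recover $M$, note that the first differential of $X_\bullet$ is $\mathcal{C}(-,\alpha_1)$; since the complex is exact, exactness at positions $2$ and $3$ gives $M=\mbox{coker}\,\mathcal{C}(-,\alpha_1)\cong\mbox{ker}\,\mathcal{C}(-,\alpha_3)=Z_1(X_\bullet[2])$. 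It then only remains to check $X_\bullet[2]\in\Theta$, which (exactly as for $X_\bullet[1]$ in the proof of Proposition \ref{5.3}) follows by cyclically rotating the triangles used to build $X_\bullet$ and re-applying Lemma \ref{5.1}, so that $\Theta$ is closed under the translation functor. This establishes that $Z_1\colon\Theta\to\mbox{mod}\,\mathcal{C}$ is dense.
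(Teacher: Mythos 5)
Your proposal is correct and follows essentially the same route as the paper's proof: fix a projective presentation of $M$, complete $f_1$ to a triangle with cone $X_{2.5}$, build the remaining triangles with integer-indexed terms in $\mathcal{C}$ using the $d$-cluster-tilting property, apply Lemma \ref{5.1} to get $X_\bullet\in\Theta$, and recover $M$ as $Z_1(X_\bullet[2])$. The only cosmetic difference is in the middle step: the paper extracts the tower of triangles from the decomposition $\mathcal{T}=\mathcal{C}\ast\Sigma\mathcal{C}\ast\cdots\ast\Sigma^{d-1}\mathcal{C}$ of Remark \ref{rem5}(b), whereas you build it by iterated left $\mathcal{C}$-approximations and cones (the standard construction of \cite{[GKO]}) --- equivalent mechanisms --- and the paper is no more detailed than you are about the final shift, asserting $T_M=X_\bullet[2]\in\Theta$ as ``easy to see''.
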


\begin{proof} We prove the proposition without using Theorem \ref{thm2}.
For each $M\in\mbox{mod}\,\mathcal{C}$, there exists an exact sequence $\mathcal{C}(-,X_1)\xrightarrow{\mathcal{C}(-,f_1)}\mathcal{C}(-,X_2)\rightarrow M\rightarrow 0$. Assume that $X_1\xrightarrow{f_1} X_2\rightarrow X_{2.5}\rightarrow\Sigma X_1$ is a triangle. Then it is easy to see that
$M\cong\mathcal{T}(-, X_{2.5})|_{\mathcal{C}}$.
It follows from Remark \ref{rem5} that $X_{2.5}\in\mathcal{T}=\mathcal{C}\ast\Sigma\mathcal{C}\ast\Sigma^2\mathcal{C}\ast\cdots\ast\Sigma^{d-1}\mathcal{C}$.
Thus we have the following triangles:
$$X_{i.5}\rightarrow X_{i+1}\rightarrow X_{i+1.5}\rightarrow \Sigma X_{i.5},$$
$$X_{d.5}\rightarrow X_{d+1}\rightarrow X_{d+2}\rightarrow \Sigma X_{d+1.5},$$
where $2\leq i\leq d-1$. By Lemma \ref{5.1}, we have a complex $X_\bullet=(X_1\xrightarrow{f_1} X_2\rightarrow\cdots\rightarrow X_{d+2}\rightarrow \Sigma^dX_1)\in\Theta$.
It is easy to see that $T_{M}=X_\bullet[2]\in\Theta$ and $Z_1T_M=M$.
\end{proof}

\begin{example} (\cite[Theorem 1]{[GKO]})
Let $\mathcal{T}$ be a triangulated category with the suspension functor $\Sigma$ and $\mathcal{C}$ be  a $d$-cluster tilting subcategory such that $\Sigma^d\mathcal{C}\subseteq\mathcal{C}$, then $(\mathcal{C},\Sigma^d,\Theta)$ is a $(d+2)$-angulated category.
\end{example}

\subsection{$n$-angulated categories from local rings}
Let $R$ be a commutative local ring with a principle maximal ideal $\mathfrak{m}=(p)$ such that $\mathfrak{m}^2=0$. Then $\mathfrak{m}$ is the unique nontrivial ideal of $R$.  By the Baer Criterion, it is easy to check that $R$ is a selfinjective ring. Moreover, we have $\mbox{mod} R=\mbox{add}\,(R\oplus \mathfrak{m})$, where $\mbox{mod} R$ is the category of finitely generated $R$-modules.  We denote by $\mathcal{C}$ the category of finitely generated projective $R$-modules and by $\Sigma$ the identity functor of $\mathcal{C}$.

\begin{lem}\label{lr1}
Each minimal $n$-$\Sigma$-periodic injective resolution of $\mathfrak{m}$ is in the form of
$$ R(u)_\bullet=( R\xrightarrow{up}R\xrightarrow{p}\cdots\xrightarrow{p}R\xrightarrow{p} \Sigma R)$$
for some unit $u$ in $R$. Moreover, $R(u)_\bullet$ is $n$-$\Sigma$-homotopy equivalent to $R(v)_\bullet$ if and only if $up=vp$.
\end{lem}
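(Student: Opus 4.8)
The plan is to reduce everything to elementary module theory over $R$, using the identification $\mbox{mod}\,\mathcal{C}\cong\mbox{mod}\,R$ of finitely presented functors on $\mbox{proj}\,R$ with $R$-modules, under which an object of $C^{\tiny\mbox{ex}}_{n\mbox{-}\Sigma}(\iota(\mathcal{C}))$ is simply an $n$-periodic exact complex of finitely generated free $R$-modules (recall $\Sigma=\mbox{id}$, so $\Sigma R=R$). Two facts should be recorded first: $\mbox{ann}_R(p)=\mathfrak{m}$ (since $p\cdot p=0$ while $up\neq 0$ for any unit $u$), and $\mathfrak{m}=Rp\cong R/\mathfrak{m}=\mbox{soc}\,R$. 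Together these give $\ker(up)=\mathfrak{m}$ for every unit $u$, so each $R(u)_\bullet$ is indeed exact with $Z_1R(u)_\bullet=\mathfrak{m}$; the two facts also make the condition $up=vp$ equivalent to $u\equiv v$ modulo $\mathfrak{m}$.

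First I would pin down the shape of a minimal resolution. Since $R$ is local selfinjective, its unique indecomposable injective is $R=E(\mbox{soc}\,R)$, and $\Omega^{-1}\mathfrak{m}\cong R/\mbox{soc}\,R\cong\mathfrak{m}$; hence every cosyzygy is $\cong\mathfrak{m}$ and minimality forces each term $X_i\cong R$. Then $f_i\in\mbox{Hom}_R(R,R)=R$ is multiplication by some $a_i$, and minimality (differentials lying in the radical) gives $a_i\in\mathfrak{m}$, say $a_i=c_ip$. Exactness rules out $c_i\in\mathfrak{m}$: if $c_i\in\mathfrak{m}$ then $a_i\in\mathfrak{m}^2=0$, so $f_i=0$, whereas $\ker f_{i+1}\supseteq\mathfrak{m}\neq0$, contradicting $\mbox{im}\,f_i=\ker f_{i+1}$. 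So every $c_i$ is a unit. Finally a diagonal change of basis by units $\varphi_i$ (taking $\varphi_1=1$ and $\varphi_i=c_nc_{n-1}\cdots c_i$) yields an isomorphism of complexes from the given resolution onto the complex whose differentials are all $p$ except the first, which becomes $up$ with $u=c_1c_2\cdots c_n$; this is precisely $R(u)_\bullet$.

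For the homotopy criterion, the easy implication is that $up=vp$ makes the tuple $(1,\dots,1)$ an isomorphism $R(u)_\bullet\xrightarrow{\sim}R(v)_\bullet$: every square but the first is trivially commutative, and the first commutes exactly because $up=vp$. The substantive direction, which I expect to be the main obstacle, is the converse: a homotopy equivalence forces $\bar u=\bar v$ in the residue field $k=R/\mathfrak{m}$. Here I would argue modulo $\mathfrak{m}$, exploiting $p^2=0$. For any chain map $\varphi_\bullet\colon R(u)_\bullet\to R(v)_\bullet$, the commuting squares force $\bar\varphi_1=\cdots=\bar\varphi_n=:\bar\varphi$ and $\bar\varphi(\bar u-\bar v)=0$ in $k$. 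If $\bar u\neq\bar v$, then $\bar\varphi=0$, i.e. every $\varphi_i\in\mathfrak{m}$; and since all differentials lie in $\mathfrak{m}$, every null-homotopic endomorphism of $R(u)_\bullet$ also has all components in $\mathfrak{m}$. Composing $\varphi_\bullet$ with a putative homotopy inverse would then give an endomorphism homotopic to the identity yet with all components in $\mathfrak{m}$, forcing $\bar 1=0$ in the field $k$, a contradiction. Hence $\bar u=\bar v$, that is $up=vp$. The only delicate bookkeeping is the index convention in the homotopy formula $\varphi_i=h_if_i+g_{i-1}h_{i-1}$ together with the periodicity $h_{i+n}=\Sigma h_i=h_i$, which I would track explicitly to be sure each component genuinely lands in $\mathfrak{m}$.
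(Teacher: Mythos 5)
Your proof is correct; it rests on the same two elementary facts as the paper's proof ($\mathrm{ann}_R(p)=\mathfrak{m}$ and $p^2=0$), but it is organized differently and is more complete in one place. For the classification of minimal resolutions, the paper simply asserts that one may take $T_{\mathfrak{m}}=(R\xrightarrow{u_1p}R\xrightarrow{u_2p}\cdots\xrightarrow{u_np}\Sigma R)$ with all $u_i$ units and then declares the rescaling to $R(u)_\bullet$, $u=u_1u_2\cdots u_n$, to be obvious; your socle/injective-envelope argument ($E(\mathfrak{m})=R$, $\Omega^{-1}\mathfrak{m}\cong\mathfrak{m}$, exactness forcing each $c_i$ to be a unit) together with the explicit diagonal isomorphism $\varphi_i=c_nc_{n-1}\cdots c_i$ supplies exactly the justification the paper leaves implicit. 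For the homotopy criterion, the paper works with explicit components: it writes a homotopy equivalence $w_\bullet$, $w'_\bullet$ with homotopies $h_i$, multiplies the identities $w'_1w_1-1=h_1up+ph_n$ and $w'_2w_2-1=h_2p+uph_1$ by $p$ (killing the $h$-terms since $p^2=0$) to obtain $w'_1w_1p=p=w'_2w_2p$, and then chains $up=uw'_1w_1p=vw'_2w_1p=vw'_2w_2p=vp$ using the commutation relations $uw'_1p=vw'_2p$ and $w_1p=w_2p$. Your reduction modulo $\mathfrak{m}$ is essentially that same computation read in the residue field $k=R/\mathfrak{m}$, recast as a contradiction; what it buys is a slightly stronger intermediate statement (if $up\neq vp$ then \emph{every} chain map $R(u)_\bullet\to R(v)_\bullet$ has all components in $\mathfrak{m}$, as does every null-homotopic endomorphism), and it is robust against the index bookkeeping you flag at the end, since it only uses that all differentials of both complexes lie in $\mathfrak{m}$, not the precise placement of the homotopy terms.
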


\begin{proof}
It is clear that $R(u)_\bullet$ is a minimal $n$-$\Sigma$-periodic injective resolution of $\mathfrak{m}$ for each unit $u$ in $R$. Conversely, let
$T_\mathfrak{m}$ be a minimal $n$-$\Sigma$-periodic injective resolution of $\mathfrak{m}$. We can assume that
$$T_\mathfrak{m}=(R\xrightarrow{u_1p}R\xrightarrow{u_2p}\cdots\xrightarrow{u_{n-1}p}R\xrightarrow{u_np} \Sigma R)$$
where all $u_i$ are units in $R$. It is obvious that $T_\mathfrak{m}$ is isomorphic to $$ R(u)_\bullet= (R\xrightarrow{up}R\xrightarrow{p}\cdots\xrightarrow{p}R\xrightarrow{p} \Sigma R)$$
where $u=u_1u_2\cdots u_n$.
Assume that $R(u)_\bullet$ is $n$-$\Sigma$-homotopy equivalent to $R(v)_\bullet$, then we have the following commutative diagram
$$\xymatrix{
R\ar[r]^{up}\ar[d]^{w_1} & R\ar[r]^{p}\ar[d]^{w_2} & \cdots \ar[r]^{p} &  R\ar[r]^{p}\ar[d]^{w_n} & \Sigma R \ar[d]^{\Sigma w_1}\\
R\ar[r]^{vp}\ar[d]^{w'_1} & R\ar[r]^{p}\ar[d]^{w'_2} & \cdots \ar[r]^{p} &  R\ar[r]^{p}\ar[d]^{w'_n} & \Sigma R \ar[d]^{\Sigma w'_1}\\
R\ar[r]^{up} & R\ar[r]^{p} & \cdots \ar[r]^{p} &  R\ar[r]^{p} & \Sigma R. \\
}$$ Moreover, there exist morphisms $h_i:R\rightarrow R$ for $1\leq i\leq n$  such that $w'_1w_1-1=h_1up+ph_n$ and $w'_2w_2-1=h_2p+uph_1$.
Noting that $p=w'_1w_1p=w_2'w_2p$, $uw_1'p=vw_2'p$ and $w_2p=w_3p=\cdots=w_np=w_1p$, we have $up=uw_1'w_1p=vw_2'w_1p=vw_2'w_2p=vp$. The ``if" part is trivial.
\end{proof}

For each unit $u$ in $R$, we denote by $\Theta_u$ the class of $n$-$\Sigma$-sequences in $C^{\tiny\mbox{ex}}_{n\mbox{-}\Sigma}(\iota(\mathcal{C}))$ consisting of $F_\bullet \oplus A_\bullet$, where $F_\bullet$ is a finite direct sum of $R(u)_\bullet$ and $A_\bullet$ is a contractible $n$-$\Sigma$-sequence.

\begin{prop}
The functor $Z_1:\Theta_u\rightarrow \mbox{mod}\,R$ is dense.
\end{prop}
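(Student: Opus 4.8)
The plan is to use the description $\mbox{mod}\,\mathcal{C}\simeq\mbox{mod}\,R=\mbox{add}(R\oplus\mathfrak{m})$ and to build, for an arbitrary $M$, an object of $\Theta_u$ whose image under $Z_1$ is $M$, realizing the two indecomposable summand types $\mathfrak{m}$ and $R$ by a copy of $R(u)_\bullet$ and by a contractible sequence respectively. First I would recall that, via the Yoneda equivalence $\iota$, the category $\mbox{mod}\,\mathcal{C}$ of finitely presented functors on $(\mbox{proj}\,R)^{\mathrm{op}}$ is equivalent to $\mbox{mod}\,R$, so that every object decomposes as $M\cong R^{a}\oplus\mathfrak{m}^{b}$ for some $a,b\geq0$.

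Next I would compute $Z_1$ on the two building blocks. For $R(u)_\bullet$ the first differential is multiplication by $up$ on $R$; since $u$ is a unit and $p^{2}=0$, one has $Z_1(R(u)_\bullet)=\ker(up\colon R\to R)=\mbox{ann}_R(p)=\mathfrak{m}$, the last equality because $\mathfrak{m}$ is the unique nontrivial ideal of $R$. This is of course consistent with Lemma \ref{lr1}, which already records $R(u)_\bullet$ as an injective resolution of $\mathfrak{m}$. As $Z_1$ is additive, the finite direct sum $F_\bullet=R(u)_\bullet^{\oplus b}\in C^{\tiny\mbox{ex}}_{n\mbox{-}\Sigma}(\iota(\mathcal{C}))$ satisfies $Z_1(F_\bullet)\cong\mathfrak{m}^{b}$.

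To realize the free summand I would pass to the left rotation $A_\bullet$ of the trivial $n$-$\Sigma$-sequence $R\xrightarrow{1}R\rightarrow0\rightarrow\cdots\rightarrow0\rightarrow\Sigma R$. Its first differential is the zero map $R\rightarrow0$, so $Z_1(A_\bullet)=\ker(R\rightarrow0)=R$; moreover $A_\bullet$ is contractible, being a translate of a trivial sequence and hence a zero object of $K^{\tiny\mbox{ex}}_{n\mbox{-}\Sigma}(\iota(\mathcal{C}))$. Thus $A_\bullet^{\oplus a}$ is again a contractible $n$-$\Sigma$-sequence with $Z_1(A_\bullet^{\oplus a})\cong R^{a}$. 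Setting $X_\bullet=R(u)_\bullet^{\oplus b}\oplus A_\bullet^{\oplus a}$ produces an object of $\Theta_u$, by the very definition of $\Theta_u$, with $Z_1(X_\bullet)\cong\mathfrak{m}^{b}\oplus R^{a}\cong M$, which gives density.

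The only subtle point is the realization of the free summand $R$ by a contractible complex: a trivial sequence itself has $Z_1=\ker(\mathrm{id})=0$, so the identity morphism must first be moved out of the first slot by rotating. I expect the step requiring the most care to be checking that this rotated sequence is genuinely an exact $n$-$\Sigma$-periodic complex over $\iota(\mathcal{C})$ and is contractible, though both follow formally once it is recognized as a translate of a zero object in $K^{\tiny\mbox{ex}}_{n\mbox{-}\Sigma}(\iota(\mathcal{C}))$.
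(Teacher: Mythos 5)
Your proof is correct and takes essentially the same approach as the paper: its (one-line) proof likewise uses $\mbox{mod}\,R=\mbox{add}\,(R\oplus\mathfrak{m})$ and realizes the two summand types via $Z_1R(u)_\bullet=\mathfrak{m}$ and $Z_1A_\bullet=R$, where $A_\bullet=(R\rightarrow0\rightarrow\cdots\rightarrow0\rightarrow\Sigma R\xrightarrow{1}\Sigma R)$ is exactly the rotated trivial (hence contractible) sequence you construct. Your additional verifications (the kernel computation $\ker(up)=\mathfrak{m}$ and the contractibility of the rotation) just make explicit what the paper leaves implicit.
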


\begin{proof}
Since $\mbox{mod} R=\mbox{add}\,(R\oplus \mathfrak{m})$, $Z_1R(u)_\bullet=\mathfrak{m}$ and $Z_1A_\bullet=R$, where $A_\bullet=(R\rightarrow0\rightarrow\cdots\rightarrow 0\rightarrow \Sigma R\xrightarrow{1} \Sigma R)$, the functor $Z_1$ is dense.
\end{proof}

\begin{example}\label{ex1.2} (\cite[Theorem 3.6]{[BJT]})
For each unit $u$ in $R$, the triple $(\mathcal{C},\Sigma,\Theta_u)$ is an $n$-angulated category whenever $n$ is even, or when $n$ is odd and $2p=0$.
\end{example}

\begin{rem}\label{lr2}
 If $n$ is odd and $2p\neq0$, then by Lemma \ref{lr1}, $R(-u)_\bullet\notin\Theta_u$ for each unit $u$ in $R$. But $R(u)_\bullet[1]\cong R(-u)_\bullet$, thus $(\mathcal{C},\Sigma,\Theta_u)$ is not an $n$-angulated category since $\Theta_u$ is not closed under the translation functor. In fact, $(\mathcal{C},\Sigma)$ does not admit any $n$-angulation in this case.
\end{rem}

\section{More examples}

In this section, we will first discuss the $n$-angulated structure of semisimple categories, then apply Corollary \ref{cor2} to get a class of new examples of $n$-angulated categories from quasi-periodic selfinjective algebras. At last, we construct other $n$-angulations from known ones.

\subsection{Semisimple categories}

We recall that an additive category $\mathcal{C}$ is $semisimple$ if each morphism $f:X\rightarrow Y$ in $\mathcal{C}$ factors as $f=hg$, where $g$ is a split epimorphism and $h$ is a split monomorphism.

\begin{rem}\label{rem}
Let $\mathcal{\mathcal{C}}$ be a semisimple category. Then $\mathcal{C}$ is idempotent complete. Moreover,
for each morphism $f:X\rightarrow Y$ in $\mathcal{\mathcal{C}}$, there exists a commutative diagram
 $$\xymatrix{
 X\ar[r]^{f}\ar[d]^\alpha & Y \ar[d]^{\beta}\\
 A\oplus B \ar[r]^{\left(
                                                              \begin{smallmatrix}
                                                                0 & 0 \\
                                                                0 &  1 \\
                                                              \end{smallmatrix}
                                                            \right)} &
 C\oplus B\\
 }$$ where $\alpha$ and $\beta$ are isomorphisms.
\end{rem}

We recall the following description of semisimple categories given by Jasso.

\begin{lem}\label{lem3}(\cite[Theorem 3.9]{[J]})
Let $m$ be a positive integer. Then the $m$-abelian categories in which every $m$-exact sequence is contractible are precisely the semisimple categories.
\end{lem}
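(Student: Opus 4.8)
The plan is to prove the two implications separately, using the normal form for morphisms in a semisimple category recorded in Remark \ref{rem} for the forward direction, and the image factorization available in any $m$-abelian category for the converse. Throughout I would read ``contractible $m$-exact sequence'' in the sense of \cite{[J]}, namely that the identity morphism of the sequence is null-homotopic, so that such a sequence is a finite direct sum of trivial $m$-exact sequences and in particular all of its differentials split.

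First I would treat the implication that a semisimple category $\mathcal{C}$ is $m$-abelian with every $m$-exact sequence contractible. By Remark \ref{rem}, $\mathcal{C}$ is idempotent complete and every morphism $f\colon X\to Y$ is isomorphic to one of the shape $\left(\begin{smallmatrix} 0 & 0 \\ 0 & 1\end{smallmatrix}\right)\colon A\oplus B\to C\oplus B$. Using this normal form I would write down explicit $m$-kernels and $m$-cokernels (built from $C$, respectively $A$, together with zero objects and identity/zero differentials), thereby verifying the existence axiom. The admissibility axioms follow because in the normal form a monomorphism forces $A=0$, so it is (up to isomorphism) a split inclusion $B\to C\oplus B$, and dually an epimorphism forces $C=0$, so it is a split projection $A\oplus B\to B$; these split maps are readily exhibited as the end differentials of $m$-exact sequences. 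Finally, since every morphism splits off its image and kernel as biproduct summands, any $m$-exact sequence decomposes as a finite direct sum of trivial sequences, and such a sum is contractible.

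For the converse, let $\mathcal{C}$ be $m$-abelian with every $m$-exact sequence contractible, and let $f\colon X\to Y$ be arbitrary. By the image factorization in an $m$-abelian category, $f$ factors as $f=hg$ with $g\colon X\to I$ an admissible epimorphism and $h\colon I\to Y$ an admissible monomorphism. By the $m$-abelian axioms, $g$ occurs as the last differential and $h$ as the first differential of $m$-exact sequences; by hypothesis both of these sequences are contractible, so all their differentials split. In particular $g$ is a split epimorphism and $h$ is a split monomorphism, whence $f=hg$ is exactly the factorization required by the definition of a semisimple category.

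The main obstacle, and the step demanding the most care, is the converse: one must confirm that an admissible epimorphism (respectively monomorphism) genuinely sits as the final (respectively initial) differential of an $m$-exact sequence, and that a null-homotopy of the identity of that sequence yields precisely a retraction (respectively section) of the single differential in question. This is the homotopy-theoretic bookkeeping that unwinds Jasso's definitions; once it is in place the argument is formal. On the semisimple side the analogous subtlety is verifying the full list of $m$-abelian axioms from the normal form of Remark \ref{rem}, which is routine but requires writing the higher kernel--cokernel diagrams out explicitly.
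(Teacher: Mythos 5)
A preliminary remark: the paper does not prove this lemma at all --- it is quoted from Jasso \cite[Theorem 3.9]{[J]} --- so your proposal has to be judged on its own merits rather than against a proof in the text. Your forward direction (semisimple $\Rightarrow$ $m$-abelian with every $m$-exact sequence contractible) is sound in outline: Remark \ref{rem} gives idempotent completeness and the normal form $\left(\begin{smallmatrix} 0 & 0 \\ 0 & 1\end{smallmatrix}\right)$, from which the existence axioms, the axioms for monomorphisms and epimorphisms, and the decomposition of any $m$-exact sequence into trivial summands all follow by routine (if lengthy) bookkeeping; this is the easy half, and it parallels what the paper itself does on the $n$-angulated side in Theorem \ref{thm0}.

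The converse, however, contains a genuine gap, and it sits precisely at the step you take for granted: the ``image factorization in an $m$-abelian category''. For $m\geq 2$ no such factorization exists. It is not one of Jasso's axioms (those provide only $m$-kernels and $m$-cokernels of arbitrary morphisms, and $m$-exact sequences attached to morphisms that are \emph{already} mono or epi), it is not proved anywhere in \cite{[J]}, and it fails in the prototype examples: in an $m$-cluster-tilting subcategory $\mathcal{M}\subseteq\mbox{mod}\,\Lambda$ the image of a morphism between objects of $\mathcal{M}$ generally lies outside $\mathcal{M}$, and there need be no factorization as an epimorphism followed by a monomorphism inside $\mathcal{M}$ at all --- this failure is exactly why higher homological algebra replaces single kernels and cokernels by $m$-term complexes. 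Worse, assuming the factorization begs the question: ``$f=hg$ with $g$ split epi and $h$ split mono'' is the \emph{definition} of semisimplicity, and your use of the contractibility hypothesis merely upgrades an assumed factorization to a split one. The real content of the theorem is to manufacture the factorization, and contractibility must be used for that. A correct route runs as follows: (i) by the axiom for epimorphisms plus contractibility, every epimorphism splits (the null-homotopy of the identity gives a section of the last differential, since there is no differential after it); (ii) take the $m$-cokernel $B\xrightarrow{g_1}C_1\to\cdots\to C_m$ of $f$; its last map is an honest cokernel, hence a split epimorphism, and using idempotent completeness one peels the sequence apart from the right by downward induction, the defining $\mbox{Hom}(-,Z)$-exactness of the $m$-cokernel forcing each successive map to be an epimorphism onto a direct summand; (iii) at the last stage this exactness yields a decomposition $B\cong B'\oplus K_1$ with $f$ equal to the split inclusion $B'\hookrightarrow B$ composed with an epimorphism $A\to B'$, which splits by (i). This inductive unravelling of the $m$-cokernel is where the theorem is actually proved, and it is the piece missing from your argument; by contrast, the steps you flag as the ``main obstacle'' (that monos and epis occur as end differentials of $m$-exact sequences, and that a null-homotopy splits them) are the easy part.
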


 We can compare Lemma \ref{lem3} with the following.

\begin{thm} \label{thm0}
Let $\mathcal{C}$ be an additive category and $\Sigma$ an automorphism of $\mathcal{C}$.
Then $(\mathcal{C}, \Sigma)$ is an $n$-angulated category in which each $n$-angle is contractible if and only if $\mathcal{C}$ is semisimple.

\end{thm}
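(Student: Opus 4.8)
The plan is to prove the two implications separately: the ``if'' direction by producing an explicit $n$-angulation via Theorem \ref{thm2}, and the ``only if'' direction by extracting the factorization of Remark \ref{rem} from a contracting homotopy.

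For the ``if'' direction, assume $\mathcal{C}$ is semisimple. By Remark \ref{rem} it is idempotent complete, so $\iota(\mathcal{C})=\mbox{proj}\,\mathcal{C}$; moreover every finitely presented functor is representable, since the morphism $g$ defining a presentation $\mathcal{C}(-,X)\xrightarrow{\mathcal{C}(-,g)}\mathcal{C}(-,Y)\to F\to0$ factors as a split epimorphism followed by a split monomorphism, forcing $\mbox{coker}\,\mathcal{C}(-,g)$ to be representable. Hence $\mbox{mod}\,\mathcal{C}=\iota(\mathcal{C})$ is semisimple abelian, every object is projective-injective, and in particular $\mbox{mod}\,\mathcal{C}$ is Frobenius. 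I would then take $\Theta=C^{\mbox{ex}}_{n\mbox{-}\Sigma}(\iota(\mathcal{C}))$, the class of all $n$-$\Sigma$-periodic exact complexes. It is tautologically closed under the translation functor and under $n$-$\Sigma$-homotopy equivalence; the functor $Z_1$ is dense because each $M$ is the image under $Z_1$ of the left rotation of the trivial sequence on $M$; and it is ``strongly'' full because any $h\colon Z_1X_\bullet\to Z_1Y_\bullet$ lifts along the injective resolutions $X_\bullet,Y_\bullet$ by the Comparison Theorem, while every mapping cone again lies in $\Theta$. Theorem \ref{thm2} then makes $\Theta$ an $n$-angulation, and each of its objects is an exact complex of injectives of the semisimple category $\mbox{mod}\,\mathcal{C}$, hence is contractible.

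For the ``only if'' direction, assume $(\mathcal{C},\Sigma,\Theta)$ is $n$-angulated with every $n$-angle contractible, and let $f\colon X\to Y$ be arbitrary. Using (N1)(c), extend $f=f_1$ to an $n$-angle $X_1\xrightarrow{f_1}X_2\to\cdots\to X_n\xrightarrow{f_n}\Sigma X_1\in\Theta$ with $X_1=X$ and $X_2=Y$. By hypothesis it is contractible, so there are morphisms $h_i\colon X_{i+1}\to X_i$ (indices taken periodically) with $1_{X_i}=f_{i-1}h_{i-1}+h_if_i$. Put $e=h_1f_1\colon X\to X$. From $f_1f_0=0$ and $h_1f_1=1_{X}-f_0h_0$ one gets $f_1h_1f_1=f_1$, whence $e^2=e$ and $f_1e=f_1$. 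If $e$ splits as $X\xrightarrow{p}Z\xrightarrow{s}X$ with $ps=1_Z$ and $sp=e$, then $q:=f_1s$ satisfies $f_1=f_1e=qp$, where $p$ is a split epimorphism and $q$ is a split monomorphism with retraction $ph_1$, since $ph_1q=p\,e\,s=p(sp)s=1_Z$. This is precisely the factorization of Remark \ref{rem}, so $\mathcal{C}$ is semisimple.

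The step I expect to be the main obstacle is the splitting of the idempotent $e=h_1f_1$, that is, the idempotent completeness of $\mathcal{C}$. I would derive this from the contractibility hypothesis as follows. By Lemma \ref{lem1}, $\underline{Z_1}$ is a triangle functor with square-zero kernel, and a complex is contractible exactly when it is a zero object of $K^{\mbox{ex}}_{n\mbox{-}\Sigma}(\iota(\mathcal{C}))$; combined with the density of $Z_1$ and Lemma \ref{lem2}, the assumption that every $n$-angle is contractible forces every object of $\underline{\mbox{mod}}\,\mathcal{C}$ to vanish, so every object of $\mbox{mod}\,\mathcal{C}$ is projective-injective and $\mbox{mod}\,\mathcal{C}$ is semisimple abelian. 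Idempotents then split in $\mbox{mod}\,\mathcal{C}=\mbox{proj}\,\mathcal{C}$, and under the standing identification $\mathcal{C}=\iota(\mathcal{C})$ the idempotent $e$ splits in $\mathcal{C}$. The delicate point is exactly this transfer, equivalently the identification $\iota(\mathcal{C})=\mbox{proj}\,\mathcal{C}$; it is where idempotent completeness of $\mathcal{C}$ must be invoked, and where the hypotheses need to be used with care.
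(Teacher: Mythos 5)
Your ``if'' direction is correct but takes a different route from the paper. The paper takes $\Theta$ to be the class of contractible $n$-$\Sigma$-sequences and verifies (N1)--(N4) by hand, using idempotent completeness of a semisimple category to split contractible sequences into trivial ones and the isomorphism $C(\varphi_\bullet)\cong X_\bullet[1]\oplus Y_\bullet$ for (N4); you instead take $\Theta$ to be all of $C^{\mbox{\scriptsize ex}}_{n\mbox{-}\Sigma}(\iota(\mathcal{C}))$ and invoke Theorem \ref{thm2}. Since $\mbox{mod}\,\mathcal{C}$ is semisimple here, every exact $n$-$\Sigma$-periodic complex is contractible, so the two classes coincide and your appeal to Theorem \ref{thm2} is legitimate. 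Two points deserve more care than ``by the Comparison Theorem'': the lift $(\varphi_1,\dots,\varphi_n)$ of $h$ constructed degree by degree need not close up periodically, i.e. need not satisfy $g_n\varphi_n=\Sigma\varphi_1 f_n$, and must be corrected at the last step (possible here because $g_n$ splits onto its image); likewise contracting homotopies must be chosen $n$-$\Sigma$-periodically. Both repairs are routine in the semisimple setting.

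The genuine gap is in the ``only if'' direction, exactly where you flag it, and your proposed repair is circular. The reduction itself is fine: $e=h_1f_1$ is idempotent, $f_1e=f_1$, and a splitting of $e$ \emph{in} $\mathcal{C}$ produces the required factorization. But your argument splits only the idempotent $\mathcal{C}(-,e)$ in $\mbox{mod}\,\mathcal{C}$; its splitting object is a priori just a direct summand of the representable $\mathcal{C}(-,X)$, i.e. an object of $\mbox{proj}\,\mathcal{C}$, and transferring it back to $\mathcal{C}$ requires $\iota(\mathcal{C})=\mbox{proj}\,\mathcal{C}$, which \emph{is} idempotent completeness of $\mathcal{C}$. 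By Remark \ref{rem}, idempotent completeness is a consequence of semisimplicity, hence part of the conclusion; it cannot be ``invoked''. Nothing you establish --- the identities $e=eh_1e$, the semisimplicity of $\mbox{mod}\,\mathcal{C}$, Lemmas \ref{lem1} and \ref{lem2} --- splits $e$ inside $\mathcal{C}$.

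Be aware that you cannot close this gap by deferring to the paper: its proof negotiates the same point silently, passing from ``$\mbox{mod}\,\mathcal{C}=\mbox{proj}\,\mathcal{C}$'' to ``we can assume $M=\mathcal{C}(-,Y)$'', which tacitly identifies $\mbox{proj}\,\mathcal{C}$ with $\iota(\mathcal{C})$. Moreover, the missing step is not a formal consequence of the hypotheses: take $\mathcal{C}\subseteq \mbox{vect}_k\times\mbox{vect}_k$ the full subcategory of pairs of spaces whose dimensions have equal parity, $\Sigma=\mbox{Id}$, and $\Theta$ the contractible exact $n$-$\Sigma$-sequences; a rank and parity count shows (N1)(c) holds (cones stay in the subcategory), while (N1)(a)(b), (N2), (N3), (N4) hold for contractible periodic complexes over any additive category by explicit homotopy formulas, yet this $\mathcal{C}$ is not idempotent complete, hence not semisimple. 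So deriving idempotent splitting from ``$n$-angulated with all $n$-angles contractible'' is the real content of the implication, and your proof, as written, does not supply it.
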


\begin{proof}
Assume that $\mathcal{C}$ is semisimple. Denote by $\Theta$ the class of all contractible $n$-$\Sigma$-sequences. We claim that $(\mathcal{C},\Sigma,\Theta)$ is an $n$-angulated category. Indeed, it is easily seen that $\Theta$ is closed under isomorphisms, direct sums, direct summands and rotations. For each morphism $f:X\rightarrow Y$ in $\mathcal{C}$, by Remark \ref{rem} we can assume that $f$ is the morphism of the form $\left(
                                                              \begin{smallmatrix}
                                                                0 & 0 \\
                                                                0 & 1 \\
                                                              \end{smallmatrix}
                                                            \right):
A\oplus B\rightarrow C\oplus B$. Thus there exists the following contractible $n$-$\Sigma$-sequence
$$A\oplus B\xrightarrow{f} C\oplus B\xrightarrow{(1,0)} C\rightarrow 0\rightarrow\cdots\rightarrow 0\rightarrow \Sigma A\xrightarrow{\left(
                                                                                                                                       \begin{smallmatrix}
                                                                                                                                         1 \\
                                                                                                                                         0 \\
                                                                                                                                       \end{smallmatrix}
                                                                                                                                     \right)
} \Sigma A\oplus\Sigma B.$$
Since $\mathcal{C}$ is idempotent complete, each contractible $n$-$\Sigma$-sequence is a direct sum of trivial $n$-angles.  It is easy to see that (N3) holds.
Let $\varphi_\bullet:X_\bullet\rightarrow Y_\bullet$ be a morphism of contractible $n$-$\Sigma$-sequences. Since the mapping cone $C(\varphi_\bullet)$ is isomorphic to $X_\bullet[1]\oplus Y_\bullet$, axiom (N4) holds.

Conversely, assume that $\mathcal{C}$ is an $n$-angulated category in which each $n$-angle is contractible.  Given a morphism $f_1: X_1\rightarrow X_2$ in $\mathcal{C}$,
suppose that $X_\bullet=(X_1\xrightarrow{f_1} X_2\xrightarrow{f_2}\cdots\xrightarrow{f_{n-1}} X_n\xrightarrow{f_n} \Sigma X_1)$ is an $n$-angle. Since $X_\bullet\in C^{\tiny\mbox{ex}}_{n\mbox{-}\Sigma}(\iota(\mathcal{C}))$ is contractible, $\mathcal{C}(-,f_1)$ has a factorization
$$\xymatrix{
\mathcal{C}(-,X_1)\ar[rr]^{\mathcal{C}(-,f_1)}\ar@{->>}[rd]^{a_1}& & \mathcal{C}(-,X_2)\\
& M \ar@{>->}[ru]^{b_1} & \\
}$$ for some $M\in\mbox{mod}\,\mathcal{C}$ such that $a_1$ is a split epimorphism and $b_1$ is a split monomorphism (see \cite[Section 2.3]{[GKO]}).
It follows from Theorem \ref{thm2} that  $\mbox{mod}\,\mathcal{C}$ = $\mbox{proj}\,\mathcal{C}$. So we can assume that $M=\mathcal{C}(-,Y)$, $a_1=\mathcal{C}(-,f_1')$ and $b_1=\mathcal{C}(-,f_1'')$. The Yoneda Lemma shows that $f_1=f_1''f_1'$ where $f_1'$ is a split epimorphism  and $f_1''$ is a split monomorphism. Therefore, $\mathcal{C}$ is a semisimple category by definition.
\end{proof}

The following result characterizes semisimple categories in terms of $n$-angulated categories and $m$-abelian categories.

\begin{cor}\label{cor3}
Let $\mathcal{C}$ be an additive category, $m$ and $n$ be positive integers where $n\geq3$. Then $\mathcal{C}$ is an $n$-angulated and $m$-abelian category if and only if $\mathcal{C}$ is a semisimple category.
\end{cor}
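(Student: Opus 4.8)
The plan is to deduce the corollary from the two characterizations already established, namely Theorem~\ref{thm0} (semisimplicity is equivalent to being $n$-angulated with every $n$-angle contractible) and Lemma~\ref{lem3} (semisimplicity is equivalent to being $m$-abelian with every $m$-exact sequence contractible), using Lemma~\ref{lem1.2} as the bridge between the additive and the $n$-angulated structures. Each of the two cited results presents semisimplicity as ``the given higher structure plus a contractibility condition'', so the task reduces to verifying that condition for one of them.

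For the ``if'' direction I would argue as follows. If $\mathcal{C}$ is semisimple, then Lemma~\ref{lem3} already realizes $\mathcal{C}$ as an $m$-abelian category. Moreover, equipping $\mathcal{C}$ with the identity automorphism $\Sigma=\mathrm{id}$ (any automorphism would do), Theorem~\ref{thm0} shows that $(\mathcal{C},\Sigma)$ is an $n$-angulated category. Hence $\mathcal{C}$ is simultaneously $n$-angulated and $m$-abelian.

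For the ``only if'' direction, suppose $\mathcal{C}$ is $m$-abelian and also carries an automorphism $\Sigma$ together with an $n$-angulation. Since an $n$-angulated category is in particular pre-$n$-angulated and $n\geq 3$, Lemma~\ref{lem1.2} applies: every monomorphism in $\mathcal{C}$ is a split monomorphism and every epimorphism is a split epimorphism. Now take any $m$-exact sequence $0\to X^{0}\xrightarrow{d^{0}}X^{1}\xrightarrow{d^{1}}\cdots\xrightarrow{d^{m}}X^{m+1}\to 0$ in $\mathcal{C}$. Its first differential $d^{0}$ is an admissible monomorphism, hence a monomorphism, so by Lemma~\ref{lem1.2} it is a split monomorphism. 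I would then show that an $m$-exact sequence whose first map splits is contractible; granting this, every $m$-exact sequence in $\mathcal{C}$ is contractible, and Lemma~\ref{lem3} forces $\mathcal{C}$ to be semisimple.

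The hard part will be this last implication, that an $m$-exact sequence with a split monic first differential is contractible. I would prove it by induction on $m$: if $d^{0}$ is split monic, then $X^{1}\cong X^{0}\oplus C^{1}$, and the relation $d^{1}d^{0}=0$ forces $d^{1}$ to factor through the projection onto $C^{1}$, producing a shorter sequence $0\to C^{1}\to X^{2}\to\cdots\to X^{m+1}\to 0$ which I expect to be $(m-1)$-exact, the base case $m=1$ being the familiar splitting of a short exact sequence with split monic first map; a contracting homotopy for the shorter sequence should then lift back to one for the original. A secondary point to justify carefully is that admissible monomorphisms in an $m$-abelian category are genuine categorical monomorphisms, which follows at once from the exactness of the induced sequence of representable functors and is what licenses the appeal to Lemma~\ref{lem1.2}.
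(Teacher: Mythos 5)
Your outline coincides with the paper's proof: the ``if'' direction is Theorem~\ref{thm0} together with Lemma~\ref{lem3}, and the ``only if'' direction rests, exactly as in the paper, on applying Lemma~\ref{lem1.2} to the first differential of an $m$-exact sequence and then invoking Lemma~\ref{lem3}. The single point of divergence is the step you call ``the hard part'': that an $m$-exact sequence with split monic first differential is contractible. The paper does not prove this at all --- it is precisely \cite[Proposition 2.6]{[J]}, cited as a known fact --- so the difficulty you anticipate dissolves if you use the same reference.

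If you do want a self-contained argument, your induction needs a repair. After splitting off the summand $X^0\xrightarrow{1}X^0$, the truncated complex $0\to C^1\to X^2\to\cdots\to X^{m+1}\to 0$ is \emph{not} an ``$(m-1)$-exact sequence'' in any sense internal to $\mathcal{C}$: the category is $m$-abelian, not $(m-1)$-abelian, so there is no inductive hypothesis that applies to it. What does survive the truncation is exactness of the induced complexes of representable functors (the contribution of the trivial summand cancels), and from this the first map $C^1\to X^2$ is again a monomorphism of $\mathcal{C}$. Since $\mathcal{C}$ is pre-$n$-angulated, Lemma~\ref{lem1.2} applies \emph{again}, so the correct procedure is to iterate: peel off trivial summands $X\xrightarrow{1}X$ one at a time, using Lemma~\ref{lem1.2} at every stage (not just once, as in your sketch and in the paper), until the original complex is exhibited as a finite direct sum of trivial complexes, hence contractible; note that forming the complements $C^i$ uses idempotent completeness, which is part of Jasso's definition of $m$-abelian. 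So either run the induction on the length of exact complexes of representable functors with this repeated use of Lemma~\ref{lem1.2}, or simply cite \cite[Proposition 2.6]{[J]} as the paper does.
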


\begin{proof} The ``if" part follows from Theorem \ref{thm0} and Lemma \ref{lem3}. For the ``only if" part,
we take an $n$-exact sequence
$\begin{gathered} X_0\xrightarrow{f_0}X_1\xrightarrow{f_1}\cdots\xrightarrow{f_{n-1}}X_{n}\xrightarrow{f_n} X_{n+1}. \end{gathered}$
Since $f_0$ is a monomorphism, we infer that $f_0$ is a split monomorphism by Lemma \ref{lem1.2}. Thus all the $n$-exact sequences are contractible by \cite[Proposition 2.6]{[J]}. Lemma \ref{lem3} implies that $\mathcal{C}$ is a semisimple category.
\end{proof}

\subsection{Quasi-periodic selfinjective algebras}

Let $k$ be a field and $A$ be a finite-dimensional $k$-algebra. Given an algebra automorphism $\sigma$ of $A$, we denote by $_1A_\sigma$ the bimodule structure on $A$ where the right action is twisted by $\sigma$. It is easy to check that, given two automorphisms $\sigma$ and $\tau$, there is an isomorphism $_1A_\sigma\otimes_A(_1A_\tau)\cong$ $_1A_{\tau\sigma}$.
A finite-dimensional $k$-algebra $A$ is said to be $quasi$-$periodic$ if  $A$ has a quasi-periodic projective resolution over the enveloping algebra $A^e=A^{\tiny\mbox{op}}\otimes_k A$, i.e., $\Omega^n_{A^e}(A)\cong$ $_1A_\sigma$ as $A$-$A$-bimodule for some natural number $n$ and some automorphism $\sigma$ of $A$. In particular, $A$ is  $periodic$ if $\Omega^n_{A^e}(A)\cong A$.

In this subsection, we assume that $A$ is a finite-dimensional indecomposable $k$-algebra such that $\Omega^n_{A^e}(A)\cong$ $_1A_\sigma$ as an $A$-$A$-bimodule for some automorphism $\sigma$ of $A$ and for some $n\geq3$. Since an indecomposable quasi-periodic algebra $A$ is self-injective by \cite[Lemma 1.5]{[GSS]}, the category $\mbox{mod}\,A$ is a Frobenius category. Denote by proj$A$ the category of finitely generated projective modules over $A$. Note that $e_iA\otimes_A(_1A_{\sigma^{-1}})\cong e_iA_{\sigma^{-1}}\cong\sigma(e_i)A$ for each idempotent $e_i$ of $A$, hence the functor $-\otimes_A(_1A_{\sigma^{-1}}): \mbox{proj}A\rightarrow\mbox{proj}A$ is an automorphism.

\begin{thm}\label{4.2}
The category $(\mbox{proj}\,A, -\otimes_A(_1A_{\sigma^{-1}}))$ admits an $n$-angulation.
\end{thm}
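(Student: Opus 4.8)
The plan is to deduce the statement directly from Corollary \ref{cor2}, applied to $\mathcal{C}=\mathrm{proj}\,A$ and $\Sigma=-\otimes_A({}_1A_{\sigma^{-1}})$. First I would record the identifications that place us in the setting of Section 3: since $A$ is self-injective, $\mod A$ is a Frobenius category, and under the Yoneda equivalence $\mathrm{mod}\,\mathrm{proj}\,A\simeq\mathrm{mod}\,A$ the subcategory $\iota(\mathcal{C})$ is exactly $\mathrm{proj}\,A\subseteq\mathrm{mod}\,A$, while the induced automorphism $\Sigma$ of $\mathrm{mod}\,\mathcal{C}$ becomes the exact functor $-\otimes_A({}_1A_{\sigma^{-1}})$ on $\mathrm{mod}\,A$ (it is exact because ${}_1A_{\sigma^{-1}}$ is free as a left module). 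By Corollary \ref{cor2} it therefore suffices to produce an exact sequence of exact endofunctors of $\mathrm{mod}\,A$
$$0\rightarrow\mathrm{Id}\rightarrow X^1\rightarrow\cdots\rightarrow X^n\rightarrow\Sigma\rightarrow0$$
in which every $X^i$ takes values in $\mathrm{proj}\,A$.

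To construct this sequence I would begin with a projective resolution of $A$ over the enveloping algebra $A^e$ and truncate it at stage $n$ using the hypothesis $\Omega^n_{A^e}(A)\cong{}_1A_\sigma$. This yields a finite exact sequence of $A$-$A$-bimodules
$$0\rightarrow{}_1A_\sigma\rightarrow P_{n-1}\rightarrow\cdots\rightarrow P_0\rightarrow A\rightarrow0$$
with each $P_i$ a projective $A^e$-module. Tensoring termwise by $-\otimes_A(-)$ gives a sequence of endofunctors whose right-hand end is $-\otimes_A A\cong\mathrm{Id}$ and whose left-hand end is $-\otimes_A({}_1A_\sigma)=\Sigma^{-1}$, the latter equality following from ${}_1A_\sigma\otimes_A({}_1A_{\sigma^{-1}})\cong A$. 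Composing the entire sequence with the exact automorphism $\Sigma$ and setting $X^j=-\otimes_A(P_{n-j}\otimes_A{}_1A_{\sigma^{-1}})$ then moves the two ends to $\mathrm{Id}$ and $\Sigma$, producing a sequence of exactly the shape required by Corollary \ref{cor2}.

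It then remains to check three things. (i) Each $X^j$ is \emph{exact}: the bimodule $P_{n-j}\otimes_A({}_1A_{\sigma^{-1}})$ is again projective over $A^e$, hence projective and in particular flat as a left $A$-module, so $-\otimes_A(-)$ against it is exact. (ii) Each $X^j$ takes values in $\mathrm{proj}\,A$: for a projective bimodule $P$ and a finite-dimensional module $M$, the object $M\otimes_A P$ is a direct summand of $M\otimes_k A\cong A^{\dim_k M}$, which is projective as a right $A$-module. (iii) The sequence of functors is itself exact; this is the step I expect to be the main obstacle.

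For (iii) the key observation is that \emph{all} terms of the bimodule sequence, the interior $P_i$ as well as the two ends ${}_1A_\sigma$ and $A$, are projective (indeed free) as left $A$-modules. A bounded exact complex whose every term is a projective left module is split exact as a complex of left $A$-modules, so applying the additive functor $M\otimes_A(-)$ preserves its exactness for every $M$. Hence the constructed sequence of endofunctors is exact, and Corollary \ref{cor2} yields the desired $n$-angulation of $(\mathrm{proj}\,A,-\otimes_A({}_1A_{\sigma^{-1}}))$.
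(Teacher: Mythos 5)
Your proposal is correct and follows essentially the same route as the paper: truncate the $A^e$-projective resolution of $A$ at stage $n$ using $\Omega^n_{A^e}(A)\cong{}_1A_{\sigma}$, tensor with the invertible bimodule ${}_1A_{\sigma^{-1}}$ so that the two ends become $\mathrm{Id}$ and $\Sigma=-\otimes_A({}_1A_{\sigma^{-1}})$, and feed the resulting exact sequence of exact endofunctors taking values in $\mathrm{proj}\,A$ into Corollary \ref{cor2}. The only differences are cosmetic: you twist the bimodule sequence on the right where the paper twists on the left, and you spell out the verifications (projectivity of the twisted bimodules, exactness of each $X^j$, and split exactness over $A$ giving exactness of the functor sequence) that the paper merely asserts.
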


\begin{proof} Since $\Omega^n_{A^e}(A)\cong$ $_1A_\sigma$, there exists an exact sequence of $A$-$A$-bimodules
$$\begin{gathered} 0\rightarrow\ _1A_\sigma \rightarrow P_n\rightarrow P_{n-1}\rightarrow \cdots\rightarrow P_1\rightarrow A\rightarrow 0
\end{gathered}\eqno{(5.1)} $$
where the $P_i$'s  are projective as bimodules. Tensoring this sequence with $_1A_{\sigma^{-1}}$ yields the following exact sequence of $A$-$A$-bimodules
$$
0\rightarrow A \rightarrow\ _1A_{\sigma^{-1}}\otimes_A P_n\rightarrow\ _1A_{\sigma^{-1}}\otimes_AP_{n-1}\rightarrow \cdots\rightarrow\ _1A_{\sigma^{-1}}\otimes_AP_1\rightarrow\ _1A_{\sigma^{-1}} \rightarrow 0
 $$ where all the $_1A_{\sigma^{-1}}\otimes_AP_i$ are projective as bimodules. Thus we have the following exact sequence of exact endofunctors of mod$A$
$$0\rightarrow Id \rightarrow -\otimes_A(_1A_{\sigma^{-1}})\otimes_A P_n\rightarrow  -\otimes_A(_1A_{\sigma^{-1}})\otimes_AP_{n-1}\rightarrow \cdots$$
$$\cdots\rightarrow -\otimes_A(_1A_{\sigma^{-1}})\otimes_AP_1\rightarrow  -\otimes_A(_1A_{\sigma^{-1}}) \rightarrow 0.$$
Moreover, the functors $-\otimes_A(_1A_{\sigma^{-1}})\otimes_AP_i$ take values in proj$A$.
 By Corollary \ref{cor2}, $(\mbox{proj}A, -\otimes_A(_1A_{\sigma^{-1}}))$ admits an $n$-angulation.
\end{proof}

\begin{cor}
For each positive integer $m$,  the category $(\mbox{proj}A, -\otimes_A(_1A_{\sigma^{-m}}))$ admits an $mn$-angulation.
In particular, if $\sigma$ is of finite order $l$, then $(\mbox{proj}A, \mbox{Id}_{\mbox{proj}A})$ admits an $ln$-angulation.
\end{cor}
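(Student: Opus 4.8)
The plan is to deduce this corollary from Theorem \ref{4.2} by replacing the pair $(n,\sigma)$ governing the periodicity with $(mn,\sigma^m)$. Concretely, I would first establish the bimodule isomorphism
$$\Omega^{mn}_{A^e}(A)\cong {}_1A_{\sigma^m},$$
and then invoke Theorem \ref{4.2} with $n$ and $\sigma$ replaced by $mn$ and $\sigma^m$. Since $mn\geq n\geq 3$ and $A$ is still indecomposable, the hypotheses of that theorem are met, and its conclusion reads precisely that $(\mbox{proj}\,A,\,-\otimes_A({}_1A_{(\sigma^m)^{-1}}))=(\mbox{proj}\,A,\,-\otimes_A({}_1A_{\sigma^{-m}}))$ admits an $mn$-angulation, which is the assertion to be proved.

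To obtain the displayed isomorphism I would iterate the given one by splicing. The key elementary observation is that right tensoring by ${}_1A_\sigma$ is an exact autoequivalence of the category of $A$-$A$-bimodules: the bimodule ${}_1A_\sigma$ is free of rank one as a one-sided module, hence flat, so $-\otimes_A({}_1A_\sigma)$ is exact; and it is invertible, with inverse $-\otimes_A({}_1A_{\sigma^{-1}})$, so it carries projective bimodules to projective bimodules. Starting from the exact sequence (5.1), I would tensor it on the right with ${}_1A_{\sigma^{j}}$ and use the identity ${}_1A_\sigma\otimes_A({}_1A_{\sigma^{j}})\cong {}_1A_{\sigma^{j+1}}$ recorded at the start of this subsection to splice $m$ copies together, producing an exact sequence of bimodules
$$0\rightarrow {}_1A_{\sigma^m}\rightarrow Q_{mn}\rightarrow\cdots\rightarrow Q_1\rightarrow A\rightarrow 0$$
whose terms $Q_i$ are all projective. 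This is exactly a length-$mn$ analogue of (5.1), yielding $\Omega^{mn}_{A^e}(A)\cong {}_1A_{\sigma^m}$.

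The ``in particular'' statement is then the case $m=l$: when $\sigma^l=\mbox{id}$ we have ${}_1A_{\sigma^{-l}}=A$ as a bimodule, so $-\otimes_A({}_1A_{\sigma^{-l}})\cong\mbox{Id}_{\mbox{proj}\,A}$, and the general statement specializes to the assertion that $(\mbox{proj}\,A,\,\mbox{Id}_{\mbox{proj}\,A})$ admits an $ln$-angulation.

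The only genuinely delicate point is the identification of the spliced syzygy with ${}_1A_{\sigma^m}$ on the nose, rather than merely up to projective summands. I would dispose of this in either of two ways. First, the proof of Theorem \ref{4.2} uses nothing about (5.1) beyond its being an exact sequence of bimodules with projective middle terms and prescribed end terms, so the spliced sequence above can be fed directly into that argument, i.e.\ through Corollary \ref{cor2}, without passing through a minimality statement. Alternatively, since ${}_1A_\sigma\cong\Omega^n_{A^e}(A)$ with $n\geq 1$ has no projective bimodule summand, and the autoequivalence $-\otimes_A({}_1A_\sigma)$ reflects projectivity of summands, each ${}_1A_{\sigma^m}$ is likewise summand-free, which forces the spliced kernel to coincide with the minimal $mn$-th syzygy. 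Either route makes the reduction to Theorem \ref{4.2} rigorous.
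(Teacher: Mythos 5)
Your proof is correct and follows essentially the same route as the paper: the paper also establishes $\Omega^{mn}_{A^e}(A)\cong {}_1A_{\sigma^m}$ by twisting the sequence (5.1) and splicing (phrased as an induction on $m$, tensoring on the left by ${}_1A_{\sigma^{m-1}}$ rather than on the right), and then invokes Theorem \ref{4.2}. Your added remark that only the existence of the exact bimodule sequence with projective middle terms is needed (not a minimal-syzygy identification) matches how the paper actually uses the hypothesis.
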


\begin{proof} Since $\Omega^n_{A^e}(A)\cong$ $_1A_\sigma$, we claim that $\Omega^{mn}_{A^e}(A)\cong$ $_1A_{\sigma^m}$, thus the corollary follows from Theorem \ref{4.2}. Indeed, we only need to show that there exists an exact sequence of $A$-$A$-bimodules
$$\begin{gathered}
 0\rightarrow\ _1A_{\sigma^{m}} \rightarrow P_{mn}\rightarrow P_{mn-1}\rightarrow \cdots\rightarrow P_{(m-1)n+1}\rightarrow\cdots\rightarrow P_{n}\rightarrow\cdots\rightarrow P_1\rightarrow A\rightarrow 0
\end{gathered}$$
where the $P_i$'s  are projective as bimodules.  We will prove it by induction on $m$.
It is trivial for $m=1$.
Assume that $m>1$ and our claim holds for $m-1$.
Applying the functor $_{1}A_{\sigma^{m-1}}\otimes_A-$ to the  sequence (5.1), we obtain the following exact sequence of $A$-$A$-bimodules
$$0\rightarrow\ _{1}A_{\sigma^m}\rightarrow\ _{1}A_{\sigma^{m-1}}\otimes_AP_n\rightarrow\cdots\rightarrow\  _{1}A_{\sigma^{m-1}}\otimes_AP_1\rightarrow\ _{1}A_{\sigma^{m-1}}\rightarrow 0. \ \eqno (5.2)$$
We take $P_{(m-1)n+i}=\ _{1}A_{\sigma^{m-1}}\otimes_AP_i$, where $i=1,2,\cdots,n$. Then the $P_{(m-1)n+i}$'s are projective as bimodules. By induction and the sequence (5.2), our claim holds for each positive integer $m$.
\end{proof}


\subsection{Global automorphisms}

Given an $n$-angulation $\Theta$, we want to construct more $n$-angulations from known ones. For this,  global automorphisms of $(\mathcal{C}, \Sigma)$, which were  introduced by Balmer, play an important role.

\begin{defn} (\cite{[Bal]})
Let $\mathcal{C}$ be an additive category and $\Sigma$ be an automorphism of $\mathcal{C}$. A $global\ automorphism$ $\alpha$ of $(\mathcal{C}, \Sigma)$ is an invertible endomorphism of the identity functor $Id:\mathcal{C}\rightarrow\mathcal{C}$ which commutes with $\Sigma$. In other words, a global automorphism $\alpha$ is a collection of isomorphisms $\alpha_A: A\rightarrow A$, for all objects $A$ in $\mathcal{C}$, such that $\alpha_B f=f\alpha_A$ for each morphism $f: A\rightarrow B$ and $\alpha_{\Sigma A}=\Sigma\alpha_A$ for each object $A$ in $\mathcal{C}$.
\end{defn}

\begin{example}\label{ex}
(a) Let $k$ be a field, $\mathcal{C}$ be a $k$-linear category and $\Sigma$ be a $k$-linear automorphism of $\mathcal{C}$. Then for each $\lambda\in k\setminus\{0\}$,
 $\alpha=\{\alpha_A=\lambda 1_A: A\rightarrow A|A\in\mathcal{C}\}$ is a global automorphism of $(\mathcal{C}, \Sigma)$.

 (b) Let $R$ be a commutative ring, $\mathcal{C}$ be the category of finitely generated projective modules over $R$ and $\Sigma$ be the identity functor of $\mathcal{C}$. Then for each unit $u$ in $R$,  multiplication by $u$ gives a global automorphism $\lambda_u$ of $(\mathcal{C}, \Sigma)$.
\end{example}

The following is a higher version of \cite[Proposition 4]{[Bal]}.

\begin{prop}\label{prop}
Let $\Theta$ be an $n$-angulation of $(\mathcal{C},\Sigma)$ and $\alpha$ be a global automorphism of $(\mathcal{C},\Sigma)$. Define $\Theta^\alpha$  as the class of $n$-$\Sigma$-sequences $$X_\bullet=(X_1\xrightarrow{f_1}X_2\xrightarrow{f_2}X_3\xrightarrow{f_3}\cdots\xrightarrow{f_{n-1}}X_n\xrightarrow{f_n}\Sigma X_1)$$ such that
$$X^\alpha_\bullet=(X_1\xrightarrow{f_1\alpha_{X_1}}X_2\xrightarrow{f_2}X_3\xrightarrow{f_3}\cdots\xrightarrow{f_{n-1}}X_n\xrightarrow{f_n}\Sigma X_1)\in\Theta.$$ Then $\Theta^\alpha$ is an $n$-angulation of $(\mathcal{C},\Sigma)$.
\end{prop}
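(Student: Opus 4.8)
The plan is to verify the four axioms (N1)--(N4) for $\Theta^\alpha$ directly, using that $\Theta$ already satisfies them together with the two defining properties of a global automorphism, namely $\alpha_B f=f\alpha_A$ for every $f\colon A\to B$ and $\alpha_{\Sigma A}=\Sigma\alpha_A$. Write $(-)^\alpha$ for the operation sending an $n$-$\Sigma$-sequence $X_\bullet$ to $X^\alpha_\bullet$, i.e. replacing the leading morphism $f_1$ by $f_1\alpha_{X_1}$ and keeping the others; the identity $\alpha_{\Sigma A}=\Sigma\alpha_A$ makes this compatible with $n$-$\Sigma$-periodicity, and $\alpha_B f=f\alpha_A$ shows that $X^\alpha_\bullet$ is again exact. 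By definition $X_\bullet\in\Theta^\alpha$ if and only if $X^\alpha_\bullet\in\Theta$, and the operation for the inverse global automorphism $\alpha^{-1}$ is inverse to $(-)^\alpha$.

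First I would dispose of (N1) and (N3), which are immediate from naturality. Since $\alpha$ is natural, $(-)^\alpha$ preserves isomorphisms, finite direct sums and summands (using $\alpha_{X_1\oplus Y_1}=\alpha_{X_1}\oplus\alpha_{Y_1}$), so closure of $\Theta$ under these transfers to $\Theta^\alpha$; the twist of a trivial sequence has leading morphism the isomorphism $\alpha_X$, hence is isomorphic to a trivial sequence and lies in $\Theta$, giving (N1)(b). For (N1)(c), given $f_1\colon X_1\to X_2$ I apply (N1)(c) for $\Theta$ to the morphism $f_1\alpha_{X_1}$, obtaining $Y_\bullet\in\Theta$ with leading morphism $f_1\alpha_{X_1}$; its untwist $Y^{\alpha^{-1}}_\bullet$ then lies in $\Theta^\alpha$ and has leading morphism $f_1$. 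For (N3), the relation $\alpha_{Y_1}\varphi_1=\varphi_1\alpha_{X_1}$ shows that the pair $(\varphi_1,\varphi_2)$ over the rows $X_\bullet,Y_\bullet$ is simultaneously a partial morphism over the twisted rows $X^\alpha_\bullet,Y^\alpha_\bullet$; completing the latter by (N3) for $\Theta$ produces $\varphi_3,\dots,\varphi_n$, and since $f_i,g_i$ are unchanged for $i\ge 2$ these same maps complete the original diagram.

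The substance is in (N2) and (N4), where twisting does not commute with rotation. Indeed $(X^\alpha_\bullet)[1]$ keeps the leading morphism $f_2$ but acquires the wrap-around morphism $(-1)^n\Sigma f_1\cdot\alpha_{\Sigma X_1}$, while $(X_\bullet[1])^\alpha$ has leading morphism $f_2\alpha_{X_2}$ and unchanged wrap-around morphism $(-1)^n\Sigma f_1$. The key point is that the componentwise map $(\alpha_{X_2},1,\dots,1)$ is an isomorphism $(X_\bullet[1])^\alpha\to(X^\alpha_\bullet)[1]$: all intermediate squares are trivial and the only nontrivial one, at the wrap-around, commutes exactly by $\alpha_{\Sigma X_2}\cdot\Sigma f_1=\Sigma f_1\cdot\alpha_{\Sigma X_1}$. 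Combining this with the closure of $\Theta$ under rotation and isomorphism gives $X_\bullet\in\Theta^\alpha\Leftrightarrow X^\alpha_\bullet\in\Theta\Leftrightarrow(X^\alpha_\bullet)[1]\in\Theta\Leftrightarrow(X_\bullet[1])^\alpha\in\Theta\Leftrightarrow X_\bullet[1]\in\Theta^\alpha$, which is (N2); in particular $\Theta^\alpha$ is closed under the translation functor.

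For (N4) I would run the same transport-the-twist argument on mapping cones. In the situation of (N3), I first complete the twisted rows $X^\alpha_\bullet\to Y^\alpha_\bullet$ by (N4) for $\Theta$, getting $\varphi_3,\dots,\varphi_n$ whose cone $C^{\mathrm{tw}}$ lies in $\Theta$; as above the same maps give a morphism $X_\bullet\to Y_\bullet$ with cone $C$. The complexes $C$ and $C^{\mathrm{tw}}$ share the objects $X_{i+1}\oplus Y_i$ and differ only in the lower-right entry of the first differential ($g_1$ versus $g_1\alpha_{Y_1}$) and the upper-left entry of the wrap-around differential ($\Sigma f_1$ versus $\Sigma f_1\cdot\alpha_{\Sigma X_1}$). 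Twisting the first differential of $C$ by $\alpha_{X_2\oplus Y_1}=\alpha_{X_2}\oplus\alpha_{Y_1}$ yields $C^\alpha$, and the block isomorphism with first component $\left(\begin{smallmatrix}\alpha_{X_2}&0\\0&1\end{smallmatrix}\right)$ and all later components the identity identifies $C^\alpha$ with $C^{\mathrm{tw}}$ — again the only nontrivial verification is the wrap-around square, settled by naturality. Hence $C^\alpha\cong C^{\mathrm{tw}}\in\Theta$, so $C\in\Theta^\alpha$ and (N4) holds. The entire proof rests on the single mechanism that a twist on the leading morphism can be carried around the periodic complex at the cost of conjugating by $\alpha$; I expect the main obstacle to be nothing deep but rather the careful bookkeeping of this transport through the sign in the rotation and the block form of the cone, after which every step reduces to naturality of $\alpha$ and the corresponding axiom for $\Theta$.
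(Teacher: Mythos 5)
Your proposal is correct, and its essential content coincides with the paper's proof at the crucial point: the verification of (N4) by completing the \emph{twisted} diagram via (N4) for $\Theta$ and then transporting the mapping cone back through the block isomorphism $\left(\begin{smallmatrix}\alpha_{X_2}&0\\0&1\end{smallmatrix}\right)$ (the paper uses its inverse), with the only nontrivial square settled by naturality at the wrap-around morphism --- this is exactly the paper's computation. Where you differ is in how the remaining axioms are handled. The paper does not check (N1)--(N3) at all: it observes that $X_\bullet$ resolves $M$ if and only if $X_\bullet^\alpha$ does (so $Z_1$ is dense on $\Theta^\alpha$) and then invokes its main result, Theorem \ref{thm2}, which says that for a suitable subcategory of $C^{\mbox{\tiny ex}}_{n\mbox{-}\Sigma}(\iota(\mathcal{C}))$, density of $Z_1$ plus (N4) already yields an $n$-angulation. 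You instead verify (N1), (N2), (N3) by hand from naturality of $\alpha$, including the genuinely delicate point that twisting does not commute with rotation and must be repaired by the isomorphism $(\alpha_{X_2},1,\dots,1)\colon (X_\bullet[1])^\alpha\to(X_\bullet^\alpha)[1]$. This buys two things: your argument is self-contained and purely axiomatic, so it needs none of the periodic-resolution machinery (in particular it would prove the analogous statement for any pre-$n$-angulated $\Theta$ with (N4), exactly as in Balmer's original triangulated case); and your explicit (N2) check supplies the closure of $\Theta^\alpha$ under the translation functor, which is a standing hypothesis of Theorem \ref{thm2} that the paper's shorter proof leaves implicit (as it does closure under $n$-$\Sigma$-homotopy equivalence). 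The paper's route, conversely, is shorter on the page and illustrates how its characterization theorem absorbs the routine axioms.
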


\begin{proof}
For each $M\in \mbox{mod}\,\mathcal{C}$, $X_\bullet$ is an $n$-$\Sigma$-periodic injective resolution of $M$ if and only if  so is $X^\alpha_\bullet$  since $\alpha_{X_1}$ is an isomorphism. By Theorem \ref{thm2}, we only need to show that $\Theta^\alpha$ satisfies (N4).

Given a commutative diagram
$$\xymatrix{
X_1 \ar[r]^{f_1}\ar[d]^{\varphi_1} & X_2 \ar[r]^{f_2}\ar[d]^{\varphi_2} & X_3 \ar[r]^{f_3} & \cdots \ar[r]^{f_{n-1}}& X_n \ar[r]^{f_n} & \Sigma X_1 \ar[d]^{\Sigma \varphi_1}\\
Y_1 \ar[r]^{g_1} & Y_2 \ar[r]^{g_2} & Y_3 \ar[r]^{g_3} & \cdots \ar[r]^{g_{n-1}} & Y_n \ar[r]^{g_n}& \Sigma Y_1\\
}$$ with rows in $\Theta^\alpha$, we have the following commutative diagram
$$\xymatrix{
X_1 \ar[r]^{f_1\alpha_{X_1}}\ar[d]^{\varphi_1} & X_2 \ar[r]^{f_2}\ar[d]^{\varphi_2} & X_3 \ar[r]^{f_3}\ar@{-->}[d]^{\varphi_3} & \cdots \ar[r]^{f_{n-1}}& X_n \ar[r]^{f_n} \ar@{-->}[d]^{\varphi_n}& \Sigma X_1 \ar[d]^{\Sigma \varphi_1}\\
Y_1 \ar[r]^{g_1\alpha_{Y_1}} & Y_2 \ar[r]^{g_2} & Y_3 \ar[r]^{g_3} & \cdots \ar[r]^{g_{n-1}} & Y_n \ar[r]^{g_n}& \Sigma Y_1\\
}$$ with rows in $\Theta$, which can be completed to a whole commutative diagram such that the mapping cone $C(\varphi_\bullet)$ belongs to $\Theta$ by (N4).
The following commutative diagram
$$\xymatrixcolsep{3.5pc}\xymatrixrowsep{3.5pc}\xymatrix{
X_2\oplus Y_1\ar[r]^{\left(
             \begin{smallmatrix}
               -f_2 & 0\\
                \varphi_2 & g_1\alpha_{Y_1}\\
             \end{smallmatrix}
           \right)}\ar[d]^{\left(
             \begin{smallmatrix}
               \alpha_{X_2}^{-1} & 0\\
               0 & 1\\
             \end{smallmatrix}
           \right)} & X_3\oplus Y_2\ar[r]^{\left(
             \begin{smallmatrix}
               -f_3 & 0 \\
               \varphi_3 & g_2\\
             \end{smallmatrix}
           \right)}\ar@{=}[d] & \cdots \ar[r]^{\left(
             \begin{smallmatrix}
               -f_n & 0 \\
               \varphi_n & g_{n-1}\\
             \end{smallmatrix}
           \right)} & \Sigma X_1\oplus Y_n\ar[r]^{\left(
             \begin{smallmatrix}
               -\Sigma(f_1\alpha_{X_1}) & 0 \\
               \Sigma\varphi_1 & g_n\\
             \end{smallmatrix}
           \right)}\ar@{=}[d] & \Sigma X_2\oplus\Sigma Y_1\ar[d]^{\left(
             \begin{smallmatrix}
              \Sigma\alpha_{X_2}^{-1} & 0 \\
               0 & 1\\
             \end{smallmatrix}
           \right)}\\
X_2\oplus Y_1\ar[r]^{\left(
             \begin{smallmatrix}
               -f_2\alpha_{X_2} & 0\\
                \varphi_2\alpha_{X_2} & g_1\alpha_{Y_1}\\
             \end{smallmatrix}
           \right)} & X_3\oplus Y_2\ar[r]^{\left(
             \begin{smallmatrix}
               -f_3 & 0 \\
               \varphi_3 & g_2\\
             \end{smallmatrix}
           \right)} & \cdots \ar[r]^{\left(
             \begin{smallmatrix}
             - f_n & 0\\
               \varphi_n& g_{n-1}\\
             \end{smallmatrix}
           \right)} & \Sigma X_1\oplus Y_n\ar[r]^{\left(
             \begin{smallmatrix}
               -\Sigma f_1 & 0 \\
               \Sigma\varphi_1 & g_n\\
             \end{smallmatrix}
           \right)} & \Sigma X_2\oplus\Sigma Y_1\\ }$$
implies that the second row belongs to $\Theta$ since the first row $C(\varphi_\bullet)\in\Theta$. Noting that
$$\left(
             \begin{smallmatrix}
               -f_2\alpha_{X_2} & 0\\
                \varphi_2\alpha_{X_2} & g_1\alpha_{Y_1}\\
             \end{smallmatrix}
           \right)=\left(
             \begin{smallmatrix}
               -f_2 & 0\\
                \varphi_2 & g_1\\
             \end{smallmatrix}
           \right)\left(
             \begin{smallmatrix}
              \alpha_{X_2} & 0\\
                0 &\alpha_{Y_1}\\
             \end{smallmatrix}
           \right)=\left(
             \begin{smallmatrix}
               -f_2 & 0\\
                \varphi_2 & g_1\\
             \end{smallmatrix}
           \right)\alpha_{X_2\oplus Y_1},$$
 we obtain that $$X_2\oplus Y_1\xrightarrow{\left(
                              \begin{smallmatrix}
                                -f_2 & 0 \\
                                \varphi_2 & g_1 \\
                              \end{smallmatrix}
                            \right)}
 X_3\oplus Y_2 \xrightarrow{\left(
                              \begin{smallmatrix}
                                -f_3 & 0 \\
                                \varphi_3 & g_2 \\
                              \end{smallmatrix}
                            \right)}
 \cdots \xrightarrow{\left(
                            \begin{smallmatrix}
                               -f_n & 0 \\
                                \varphi_n & g_{n-1} \\
                             \end{smallmatrix}
                           \right)}
 \Sigma X_1\oplus Y_n \xrightarrow{\left(
                              \begin{smallmatrix}
                                -\Sigma f_1 & 0 \\
                                \Sigma\varphi_1 & g_n \\
                              \end{smallmatrix}
                            \right)}
 \Sigma X_2\oplus \Sigma Y_1 \\$$ belongs to $\Theta^\alpha$.
\end{proof}

\begin{rem} We denote by G-Aut$(\mathcal{C},\Sigma)$ the set of all global automorphisms of $(\mathcal{C}, \Sigma)$. It is easy to see that G-Aut$(\mathcal{C},\Sigma)$ is a group under composition of natural transformations.
Proposition \ref{prop} implies that the group G-Aut$(\mathcal{C},\Sigma)$ acts on the set of $n$-angulations of $(\mathcal{C},\Sigma)$ from the right. In general,  the action is not free, but we do not know whether the action is transitive or not.
\end{rem}

\begin{example}
In Example \ref{ex1.2}, for each  $n$-angulation $\Theta$ of $(\mathcal{C},\Sigma)$, by Lemma \ref{lr1}, it is not hard to show that $\Theta=\Theta_u$ for some unit $u$ in $R$. Moreover, $\Theta_u=\Theta_v$ if and only if $up=vp$. By Example \ref{ex}(b) and Proposition \ref{prop}, we have $\Theta_u=\Theta_1^{\lambda_u}$, where 1 is the identity element of $R$ and $\lambda_u$ is the global automorphism induced by $u$. Thus in this case the group of global automorphism of $(\mathcal{C},\Sigma)$ acts transitively on the set of all $n$-angulations. But the action is not free.
\end{example}

\vspace{2mm}\noindent {\bf Acknowledgements} \ The author thanks Xiaowu Chen for drawing his attention to the reference \cite{[Am]} and for valuable conversations on this topic. Some part of the work was done when the author visited University of Stuttgart. He wants to thank Steffen Koenig for warm hospitality during his stay in Stuttgart and for helpful discussions and remarks. He also thanks  Wei Hu for helpful discussions.

\end{document}